\newcommand{\R}{\mathbb{R}}
\newcommand{\cl}{\textrm{cl}\,}
\newcommand{\Rd}{\mathbb{R}^d}
\newcommand{\inter}{{\rm int}}
\newcommand{\lipsymb}{\mathsf{L}}
\newcommand{\la}{\langle}
\newcommand{\ra}{\rangle}
\newcommand{\ip}[2]{\la {#1}, {#2} \ra}
\newcommand{\Et}[1]{ \mathbb{E}_t \left[ {#1} \right] }
\newcommand{\E}[1]{ \mathbb{E} \left[ {#1} \right] }
\newcommand{\Exi}[1]{ \mathbb{E}_\xi \left[ {#1} \right] }
\newcommand{\prox}{{\rm prox}}
\DeclareMathOperator*{\argmin}{argmin}
\DeclareMathOperator*{\argmax}{argmax}
\newcommand{\ri}{{\rm ri}}
\newcommand{\dom}{{\rm dom}\,}
\newcommand{\EE}{\mathbb{E}}
\numberwithin{equation}{section}
\newcommand{\inclu}[0] {\ar@{^{(}->}}
\newcommand{\RR}{\mathbb{R}}
\newtheorem{lemma}{Lemma}[section]
\newtheorem{theorem}[lemma]{Theorem}
\newtheorem{corollary}[lemma]{Corollary}
\newtheorem{proposition}[lemma]{Proposition}
\newtheorem{example}[lemma]{Example}
\theoremstyle{remark}
\DeclarePairedDelimiter{\dotp}{\langle}{\rangle}
\begin{document}
	
	\title{Stochastic model-based minimization \\under high-order growth}
	%\subtitle{Do you have a subtitle?\\ If so, write it here}
	
	%\titlerunning{Short form of title}        % if too long for running head
	
		\author{
	Damek Davis
		\thanks{School of Operations Research and Information Engineering, Cornell University, Ithaca, NY 14850; 	
		\texttt{people.orie.cornell.edu/dsd95/}.} 
	\and
	Dmitriy Drusvyatskiy
		\thanks{Department of Mathematics, University of Washington, Seattle, WA 98195; 	
		\texttt{sites.math.washington.edu/{\raise.17ex\hbox{$\scriptstyle\sim$}}ddrusv}. Research of Drusvyatskiy was partially supported by the AFOSR YIP award FA9550-15-1-0237 and by the NSF DMS   1651851 and CCF 1740551 awards.}
	\and
	Kellie J. MacPhee
		\thanks{Department of Mathematics, University of Washington, Seattle, WA 98195; 	
		\texttt{sites.math.washington.edu/{\raise.17ex\hbox{$\scriptstyle\sim$}}kmacphee}.} 
	}

	\date{}
	\maketitle

\begin{abstract}
Given a nonsmooth, nonconvex minimization problem, we consider algorithms that iteratively sample and minimize stochastic convex models of the objective function. Assuming that the one-sided approximation quality and the variation of the models is controlled by a Bregman divergence, we show that the scheme drives a natural stationarity measure to zero at the rate $O(k^{-1/4})$. Under additional convexity and relative strong convexity assumptions, the function values converge to the minimum at the rate of $O(k^{-1/2})$ and $\widetilde{O}(k^{-1})$, respectively. We discuss consequences for stochastic proximal point, mirror descent, regularized Gauss-Newton, and saddle point algorithms. 
\end{abstract}

\section{Introduction}
Common stochastic optimization algorithms proceed as follows. Given an  iterate $x_t$, the method samples a model  of the objective function formed at $x_t$ and  declares the next iterate to be a minimizer of the model regularized by a proximal term.
Stochastic proximal point, proximal subgradient, and Gauss-Newton type methods are common examples. 
Let us formalize this viewpoint, following \cite{stochastic_subgrad}. Namely, consider the optimization problem 
\begin{equation}\label{eqn:full_prob_class}
\min_{x\in\R^d}~ F(x):=f(x)+r(x).
\end{equation}
where the function $r\colon\R^d\to\R\cup\{\infty\}$ is closed and convex  and the only access to $f\colon \R^d\to\R$ is by sampling a {\em stochastic one-sided model}. That is, for every point $x$, there exists a family of models $f_x(\cdot,\xi)$ of $f$, indexed by a random variable $\xi\sim P$.  This setup immediately motivates the following algorithm, analyzed in  \cite{stochastic_subgrad}:
\begin{equation}\label{eqn:main_alg}
\left\{~\begin{aligned}
&\textrm{Sample }\xi_t\sim P,\\
&\textrm{Set } x_{t+1}=\argmin_x~ \left\{f_{x_t}(x, \xi_t)+r(x) +\frac{1}{2\eta_t}\|x-x_t\|^2_2\right\}
\end{aligned}~\right\},
\end{equation}
where $\eta_t>0$ is an appropriate control sequence that governs the step-size of the algorithm.

Some thought shows that convergence guarantees of the method \eqref{eqn:main_alg} should rely at least on two factors: $(i)$ control over the approximation quality, $f_{x}(\cdot,\xi)-f(\cdot)$, and $(ii)$ growth/stability properties of the individual models $f_{x}(\cdot,\xi)$. With this in mind, the paper \cite{stochastic_subgrad} isolates the following assumptions:
\begin{equation}\label{eqn:err_approx_intro}
\mathbb{E}_{\xi}[f_{x}(x,\xi)]=f(x)\qquad \textrm{and}\qquad \mathbb{E}_{\xi}[f_{x}(y,\xi)-f(y)]\leq \frac{\tau}{2}\|y-x\|^2_2\qquad\forall x,y,
\end{equation}
and there exists a square integrable function $L(\cdot)$ satisfying
\begin{equation}\label{eqn:variation}
f_{x}(x,\xi)-f_x(y,\xi)\leq L(\xi) \|x-y\|_2\qquad\forall x,y.
\end{equation}
 Condition \eqref{eqn:err_approx_intro} simply says that in expectation, the model $f_{x}(\cdot,\xi)$ must
globally lower bound $f(\cdot)$ up to a quadratic error, while agreeing with $f$ at the base point $x$; when \eqref{eqn:err_approx_intro} holds, the paper \cite{stochastic_subgrad} calls the assignment  $(x,y,\xi)\mapsto f_{x}(y, \xi)$ a stochastic one-sided model of $f$. Property \eqref{eqn:variation}, in contrast, asserts a Lipschitz type property of the individual models $f_x(\cdot,\xi)$.\footnote{The stated assumption (A4) in \cite{stochastic_subgrad} is stronger than \eqref{eqn:variation}; however, a quick look at the arguments shows that property \eqref{eqn:variation} suffices to obtain essentially the same convergence guarantees. } The main result of \cite{stochastic_subgrad} shows that under these assumption, the scheme~\eqref{eqn:main_alg} drives a natural stationarity measure of the problem to zero at the rate $O(k^{-1/4})$. Indeed, the stationarity measure is simply the gradient of the Moreau envelope
\begin{equation}\label{eqn:envel_noneuc_intro}
  F_\lambda (x) := \inf_{y} \left\{F (y) + \tfrac{1}{2\lambda} \|y-x\|^2_2 \right\} ,
\end{equation}
where $\lambda>0$ is a smoothing parameter on the order of $\tau$.

The assumptions~\eqref{eqn:err_approx_intro} and \eqref{eqn:variation} are perfectly aligned with existing literature. Indeed, common first-order algorithms rely on global Lipschitz continuity of the objective function or of its gradient; see for example the monographs \cite{nem_yud,nesterov2013introductory,beck_book}. Recent work  \cite{descentBBT,rel_smooth_freund,Lu_mirror_weird,rick_rel_smooth,nonconv_teb}, in contrast, has emphasized  that global Lipschitz assumptions can easily fail for well-structured problems. Nonetheless, these papers show that it is indeed possible to develop efficient algorithms even without the global Lipschitz assumption. The key idea, originating in \cite{descentBBT,rel_smooth_freund,Lu_mirror_weird}, is to model errors in approximation by a Bregman divergence, instead of a norm. The ability to deal with problems that are not globally Lipschitz is especially important in stochastic nonconvex settings, where line-search strategies that exploit local Lipschitz continuity are not well-developed. 

Motivated by the recent work on relative continuity/smoothness \cite{descentBBT,rel_smooth_freund,Lu_mirror_weird}, 
 we extend the results of \cite{stochastic_subgrad} to non-globally Lipschitzian settings. Formally, we simply  replace the squared norm $\frac{1}{2}\|\cdot\|^2$ in the displayed equations  \eqref{eqn:main_alg}-\eqref{eqn:envel_noneuc_intro} by a Bregman divergence 
\begin{equation*} 
D_\Phi (y,x) = \Phi(y) - \Phi(x) - \la \nabla \Phi (x), y-x \ra,
\end{equation*}
generated by a Legendre function $\Phi$. With this modification and under mild technical conditions, we will show that algorithm~\eqref{eqn:main_alg} drives the gradient of the Bregman envelope~\eqref{eqn:envel_noneuc_intro} to zero at the rate $O(k^{-1/4})$, where the size of the gradient is measured in the local norm induced by $\Phi$.
As a consequence, we obtain new convergence guarantees for stochastic proximal point, mirror descent\footnote{This work appears on arXiv a month after a preprint of Zhang and He \cite{zhang_he}, who provide similar convergence guarantees specifically for the stochastic mirror descent algorithm. The results of the two papers were obtained independently and are complementary to each other.}, and regularized Gauss-Newton methods, as well as for an elementary algorithm for stochastic saddle point problems. Perhaps the most important application arena is when the functional components of the problem grow at a polynomial rate. In this setting, we present a simple Legendre function $\Phi$ that satisfies the necessary assumptions for the convergence guarantees to take hold.  
We also note that the stochastic mirror descent algorithm that we present here does not require mini-batching the gradients, in contrast to the previous seminal work \cite{ghadimilanzang}. 
 
 When the stochastic models $f_{x}(\cdot,\xi)$ are themselves convex and globally under-estimate $f$ in expectation, we prove that the scheme drives the expected functional error to zero at the rate $O(k^{-1/2})$. The rate improves to $\widetilde{O}(k^{-1})$ when the regularizer $r$ is $\mu$-strongly convex relative to $\Phi$ in the sense of \cite{rel_smooth_freund}. In the special case of mirror descent, these guarantees extend the results for convex unconstrained problems in \cite{Lu_mirror_weird} to the proximal setting. Even specializing to the proximal subgradient method, the convergence guarantees appear to be different from those available in the literature. Namely, previous complexity estimates \cite{BelloCruz2017,duchi2009efficient} depend on the largest norms of the subgradients of $r$ along the iterate sequence, whereas Theorems~\ref{thm:convergenceC_nonstrong} and \ref{thm:convergenceC} replace this dependence only by the initial error $r(x_0)-\inf r$.
 
The outline of the manuscript is as follows. Section~\ref{sec:notation} reviews the relevant concepts of convex analysis, focusing on Legendre functions and the Bregman divergence. Section~\ref{sec:problem_class_alg} introduces the problem class and the algorithmic framework. This section also interprets the assumptions made for the stochastic proximal point, mirror descent, and regularized Gauss-Newton methods, as well as for a stochastic approximation algorithm for  saddle point problems. Section~\ref{sec:conv_guarant} discusses the stationarity measure we use to quantify the rate of convergence. Section~\ref{sec:conv_anal} contains the complete convergence analysis of the stochastic model-based algorithm. Section~\ref{sec:spec_anal_mirror descent} presents a specialized analysis for the mirror descent algorithm when $f$ is smooth and the stochastic gradient oracle has finite variance. Finally, in Section~\ref{sec:convexity} we prove convergence rates 
in terms of function values for stochastic model-based algorithms under (relative strong) convexity assumptions.

\section{Legendre functions and the Bregman divergence}\label{sec:notation}
Throughout, we follow standard notation from convex analysis, as set out for example by Rockafellar \cite{rockafellar}.  
The symbol $\R^d$ will denote an Euclidean space with inner product $\langle\cdot,\cdot \rangle$ and the induced norm $\|x\|_2=\sqrt{\langle x, x\rangle}$. For any set $Q\subset\R^d$, we let $\inter\, Q$ and $\cl Q$  denote the interior and closure of $Q$, respectively. Whenever $Q$ is convex, the set $\ri\, Q$ is the interior of $Q$ relative to its affine hull. The effective domain of any function $f\colon\R^d\to\R\cup\{\infty\}$, denoted by $\dom f$,  consists of all points where $f$ is finite. Abusing notation slightly, we will use the symbol $\dom(\nabla f)$ to denote the set of all points where $f$ is differentiable.

This work analyzes stochastic model-based minimization algorithms, where the ``errors''  are controlled by a Bregman divergence. For wider uses of the Bregman divergence in first-order methods, we refer the interested reader to the expository articles of Bubeck~\cite{bubeck2015convex},  Juditsky-Nemirovski \cite{juditsky_NEMfirs}, and Teboulle \cite{Teboulle2018}. 

Henceforth, we fix a
{\em Legendre function} $\Phi\colon\R^d\to\R\cup\{\infty\}$, meaning:
\begin{enumerate}
\item (Convexity) $\Phi$ is proper, closed, and strictly convex.
\item (Essential smoothness) The domain
	of $\Phi$ has nonempty interior,	$\Phi$ is differentiable on $\inter(\dom \Phi)$, and for any sequence $\{x_k\}\subset \inter(\dom \Phi)$ converging to a boundary point of $\dom \Phi$, it must be the case that $\|\nabla \Phi(x_k)\|\to\infty$.
\end{enumerate}

Typical examples of Legendre functions are the squared Euclidean norm $\Phi(x)=\frac{1}{2}\|x\|^2_2$, the Shannon entropy $\Phi(x)=\sum_{i=1}^d x_i\log(x_i)$ with $\dom \Phi=\R^d_+$, and the Burge function $\Phi(x)=-\sum_{i=1}^d\log(x_i)$ with $\dom \Phi= \R^d_{++}$. For more examples, we refer the reader to the articles \cite{AusTeb,Baus_bor_breg,Eck_breg,teb_breg_2} and the recent survey \cite{Teboulle2018}.  

We will often use the observation that the subdifferential of a Legendre function $\Phi$ is empty on the boundary of its domain \cite[Theorem 26.1]{rockafellar}:
$$\partial \Phi(x)=\emptyset \qquad \textrm{for all }x\notin \inter(\dom \Phi).$$
The Legendre function $\Phi$ induces the {\em Bregman divergence}
\[ D_\Phi (y,x) := \Phi(y) - \Phi(x) - \la \nabla \Phi (x), y-x \ra,\]
for all $x \in \inter(\dom \Phi),~y\in \dom \Phi$. Notice that since $\Phi$ is strictly convex, equality $D_\Phi (y,x)=0$ holds for some $x,y\in \inter(\dom \Phi)$ if and only if $y=x$. Analysis of algorithms based on the Bregman divergence typically relies on the following three point inequality; see e.g. \cite[Property 1]{tseng}.

\begin{lemma}[Three point inequality]\label{lem:threepoint}
	Consider a closed convex function $g\colon\R^d\to\R\cup\{+\infty\}$ satisfying $\ri(\dom  g)\subset \inter(\dom \Phi)$.
	Then for any point $z\in \inter(\dom \Phi)$, any minimizer $z^+$ of the problem
\begin{equation*} 
	\min_{x}~ g(x)+D_{\Phi}(x,z),
\end{equation*}
	lies in $\inter(\dom \Phi)$, is unique, and satisfies the inequality:
	$$g(x)+D_{\Phi}(x,z)\geq g(z_+)+D_{\Phi}(z_+,z)+D_{\Phi}(x,z_+)\qquad\forall x\in \dom \Phi.$$
\end{lemma}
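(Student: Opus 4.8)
The plan is to read the three‑point inequality off two ingredients: the first‑order optimality condition satisfied by $z^+$, and the elementary ``three‑point identity'' enjoyed by every Bregman divergence. Throughout, write $h:=g+D_{\Phi}(\cdot,z)$. Since $z\in\inter(\dom\Phi)$, the map $D_{\Phi}(\cdot,z)=\Phi(\cdot)-\Phi(z)-\langle\nabla\Phi(z),\cdot-z\rangle$ is $\Phi$ minus an affine function, hence closed and \emph{strictly} convex with domain $\dom\Phi$; thus $h$ is closed and strictly convex on $\dom h=\dom g\cap\dom\Phi$, and is proper because it is finite at any point of $\ri(\dom g)\subset\inter(\dom\Phi)$. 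Strict convexity gives uniqueness of the minimizer immediately, so it remains only to locate $z^+$ and to derive the inequality. (Existence of a minimizer is part of the hypothesis; in the paper's applications it is supplied by the coercivity built into the particular choice of $\Phi$ and $g$.)

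Next I would show $z^+\in\inter(\dom\Phi)$. By Fermat's rule $0\in\partial h(z^+)$. Because $\dom(D_{\Phi}(\cdot,z))=\dom\Phi$ is full‑dimensional, $\ri(\dom D_{\Phi}(\cdot,z))=\inter(\dom\Phi)$, and the assumption $\emptyset\neq\ri(\dom g)\subset\inter(\dom\Phi)$ provides the qualification needed for the Moreau--Rockafellar sum rule:
\[
\partial h(z^+)=\partial g(z^+)+\partial\Phi(z^+)-\nabla\Phi(z).
\]
Since the Minkowski sum on the right is nonempty (it contains $0$), the summand $\partial\Phi(z^+)$ must be nonempty; by the quoted fact that $\partial\Phi$ vanishes outside $\inter(\dom\Phi)$, this forces $z^+\in\inter(\dom\Phi)$. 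Then $\partial\Phi(z^+)=\{\nabla\Phi(z^+)\}$, and the inclusion $0\in\partial g(z^+)+\nabla\Phi(z^+)-\nabla\Phi(z)$ rearranges to
\[
\nabla\Phi(z)-\nabla\Phi(z^+)\ \in\ \partial g(z^+).
\]

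Finally, expanding the definition of $D_{\Phi}$ yields, for every $x\in\dom\Phi$, the identity
\[
D_{\Phi}(x,z)=D_{\Phi}(x,z^+)+D_{\Phi}(z^+,z)+\langle\nabla\Phi(z^+)-\nabla\Phi(z),\,x-z^+\rangle .
\]
Adding $g(x)$ to both sides and invoking the subgradient inequality $g(x)\geq g(z^+)+\langle\nabla\Phi(z)-\nabla\Phi(z^+),\,x-z^+\rangle$, the two inner‑product terms cancel exactly, leaving $g(x)+D_{\Phi}(x,z)\geq g(z^+)+D_{\Phi}(z^+,z)+D_{\Phi}(x,z^+)$, which is the claim.

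The only genuinely delicate point is the membership $z^+\in\inter(\dom\Phi)$: one must rule out the minimizer lying on $\partial(\dom\Phi)$, where $D_{\Phi}(\cdot,z)$ can still be finite. The subdifferential argument above is the cleanest route, since it uses precisely the consequence of essential smoothness quoted from \cite[Theorem 26.1]{rockafellar}. Alternatively, one can argue directly that if $z^+$ were a boundary point of $\dom\Phi$ then the one‑sided directional derivative of $\Phi$ at $z^+$ toward any point of $\ri(\dom g)\subset\inter(\dom\Phi)$ would be $-\infty$ (again by essential smoothness, since $\|\nabla\Phi\|$ blows up at the boundary), contradicting optimality of $z^+$; everything else is bookkeeping with the definition of the Bregman divergence.
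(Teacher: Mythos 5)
Your argument is correct: uniqueness from strict convexity of $D_\Phi(\cdot,z)$, the membership $z^+\in\inter(\dom\Phi)$ via Fermat's rule, the Moreau--Rockafellar sum rule (whose qualification holds since $\emptyset\neq\ri(\dom g)\subset\inter(\dom\Phi)=\ri(\dom\Phi)$), and the emptiness of $\partial\Phi$ on the boundary, followed by the Bregman three-point identity combined with the subgradient inequality for $\nabla\Phi(z)-\nabla\Phi(z^+)\in\partial g(z^+)$. The paper does not prove this lemma but cites \cite[Property 1]{tseng}, and your proof is exactly the standard argument used there, so nothing further is needed.
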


Recall that a function $f\colon\R^d\to\R\cup\{\infty\}$ is called $\rho$-weakly convex if the perturbed function $f+\frac{\rho}{2}\|\cdot\|^2_2$ is convex \cite{Nurminskii1973}. By analogy, 
we will say that $f$ is {\em $\rho$-weakly convex relative to} $\Phi$ if the perturbed function $f + \rho\Phi$ is convex. This notion is closely related to the relative smoothness condition introduced in \cite{rel_smooth_freund,descentBBT}. 

Relative weak convexity, like its classical counterpart, can be caracterized through generalized derivatives. Recall that the {\em Fr\'{e}chet subdifferential} of a function $f$ at a point $x\in \dom f$, denoted $\hat{\partial} f(x)$, consists of all vectors $v\in\R^d$ satisfying
$$f(y)\geq f(x)+\langle v,y-x\rangle+o(\|y-x\|)\qquad \textrm{as }y\to x.$$
The {\em limiting subdifferential} of $f$ at $x$, denoted $\partial f(x)$, consists of all  vectors $v\in\R^d$ such that there exist sequences  $x_k\in\R^d$ and $v_k\in \hat \partial f(x_k)$ satisfying $(x_k,f(x_k),v_k)\to(x,f(x),v)$.

\begin{lemma}[Subdifferential characterization]\label{lem:char_subdiff} {\hfill \\ } The following are equivalent for any locally Lipschitz function $f\colon \Rd \to\R$.
	\begin{enumerate}
		\item\label{it:1} The function $f$ is $\rho$-weakly convex relative to $\Phi$.
		\item\label{it:2} For any $x\in \inter(\dom \Phi),y\in \dom \Phi$ and any $v\in\hat{\partial} f(x)$, the  inequality holds:
		\begin{equation}\label{eqn:lower_bound_breg}
		f(y)\geq f(x)+\langle v,y-x\rangle-\rho D_{\Phi}(y,x).
		\end{equation}
		\item\label{it:3} For any $x\in\inter(\dom \Phi)\cap \dom(\nabla f)$, and any $y\in  \dom \Phi$, the inequality holds:
		\begin{equation}\label{eqn:lower_bound_breg2}
		f(y)\geq f(x)+\langle \nabla f(x),y-x\rangle-\rho D_{\Phi}(y,x).
		\end{equation}
	\end{enumerate}
			If $f$ and $\Phi$ are $C^2$-smooth on $\inter(\dom \Phi)$, then the three properties above are all equivalent to 
			\begin{equation}\label{eqn:second_orderchar}
			\nabla^2 f(x)\succeq -\rho \nabla^2 \Phi(x)\qquad\qquad \forall x\in \inter(\dom \Phi).	
			\end{equation}
			\end{lemma}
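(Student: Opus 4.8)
The plan is to prove the cycle of implications $\ref{it:1}\Rightarrow\ref{it:2}\Rightarrow\ref{it:3}\Rightarrow\ref{it:1}$ and then treat the $C^2$ addendum separately. Throughout I would write $g:=f+\rho\Phi$, noting that $g$ is closed and proper with $\dom g=\dom\Phi$, and that $g$ is finite and locally Lipschitz on $\inter(\dom\Phi)$ (since $f$ is locally Lipschitz on $\R^d$ and the convex function $\Phi$ is locally Lipschitz on the interior of its domain). The single algebraic fact powering every step is the rearrangement
\[
f(y)\ge f(x)+\la v,y-x\ra-\rho D_{\Phi}(y,x)\quad\Longleftrightarrow\quad g(y)\ge g(x)+\la v+\rho\nabla\Phi(x),\,y-x\ra ,
\]
valid for $x\in\inter(\dom\Phi)$ and $y\in\dom\Phi$, obtained by expanding $D_{\Phi}(y,x)=\Phi(y)-\Phi(x)-\la\nabla\Phi(x),y-x\ra$; here $v+\rho\nabla\Phi(x)$ is exactly the generic element of $\hat\partial g(x)=\hat\partial f(x)+\rho\nabla\Phi(x)$, by the sum rule for the Fr\'echet subdifferential with the smooth summand $\rho\Phi$.

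For $\ref{it:1}\Rightarrow\ref{it:2}$: if $g$ is convex, then for $x\in\inter(\dom\Phi)$ and any $v\in\hat\partial f(x)$ we have $v+\rho\nabla\Phi(x)\in\hat\partial g(x)=\partial g(x)$, so the convex subgradient inequality $g(y)\ge g(x)+\la v+\rho\nabla\Phi(x),y-x\ra$ holds for all $y$, and the displayed equivalence gives \eqref{eqn:lower_bound_breg}. The implication $\ref{it:2}\Rightarrow\ref{it:3}$ is immediate, since at any $x\in\inter(\dom\Phi)\cap\dom(\nabla f)$ one has $\nabla f(x)\in\hat\partial f(x)$. The substantive direction is $\ref{it:3}\Rightarrow\ref{it:1}$, and I expect that to be the main obstacle, because \eqref{eqn:lower_bound_breg2} only supplies affine minorants anchored at \emph{interior} differentiability points of $f$, whereas convexity of $g$ must be certified on all of $\dom\Phi$, including its boundary.

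To carry out $\ref{it:3}\Rightarrow\ref{it:1}$, set $D:=\inter(\dom\Phi)\cap\dom(\nabla f)$; by Rademacher's theorem $\dom(\nabla f)$ has full Lebesgue measure, so $D$ is dense in the open set $\inter(\dom\Phi)$. Rewriting \eqref{eqn:lower_bound_breg2} through the displayed equivalence shows that for each $x\in D$ the affine function $\ell_x(y):=g(x)+\la\nabla f(x)+\rho\nabla\Phi(x),\,y-x\ra$ satisfies $\ell_x\le g$ on $\dom\Phi$, hence on all of $\R^d$ since $g\equiv+\infty$ off $\dom\Phi$. Define the closed convex minorant $\tilde g:=\sup_{x\in D}\ell_x$; then $\tilde g\le g$, and $\tilde g(x)\ge\ell_x(x)=g(x)$ forces $\tilde g=g$ on $D$. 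Because $g$ is continuous on $\inter(\dom\Phi)$, $\tilde g$ is convex and finite (hence continuous) there, and $D$ is dense, we conclude $\tilde g=g$ on $\inter(\dom\Phi)$. It then remains to match $\tilde g$ and $g$ on the boundary portion $\dom\Phi\setminus\inter(\dom\Phi)$: fix such a $y$, pick $x_0\in\inter(\dom\Phi)$, and set $y_t:=(1-t)x_0+ty$, which lies in $\inter(\dom\Phi)$ for $t\in[0,1)$, so $g(y_t)=\tilde g(y_t)$; passing to the limit $t\uparrow1$ using lower semicontinuity of $g$ together with the segment-continuity of the closed proper convex function $\tilde g$ along segments emanating from the relative interior point $x_0$ gives $g(y)\le\liminf_t g(y_t)=\lim_t\tilde g(y_t)=\tilde g(y)\le g(y)$, hence $g(y)=\tilde g(y)$. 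Therefore $g$ equals the closed convex function $\tilde g$ on the convex set $\dom\Phi$ and is $+\infty$ elsewhere, which makes $g$ convex, establishing \ref{it:1}. The convex-minorant device plus segment-continuity is precisely what bridges the interior-to-boundary gap flagged above.

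Finally, for the $C^2$ case: if $f$ and $\Phi$ are $C^2$ on $\inter(\dom\Phi)$, so is $g$, and on the open convex set $\inter(\dom\Phi)$ the classical second-order criterion says $g$ is convex there if and only if $\nabla^2 g=\nabla^2 f+\rho\nabla^2\Phi\succeq0$, i.e.\ if and only if \eqref{eqn:second_orderchar} holds. If \ref{it:1} holds then $g$ is in particular convex on $\inter(\dom\Phi)$, giving \eqref{eqn:second_orderchar}. Conversely, if \eqref{eqn:second_orderchar} holds then $g$ is convex on $\inter(\dom\Phi)$, so $f(y)\ge f(x)+\la\nabla f(x),y-x\ra-\rho D_{\Phi}(y,x)$ for all $x,y\in\inter(\dom\Phi)$; for $y\in\dom\Phi$ on the boundary one recovers the same inequality by letting $y_t=(1-t)x+ty\uparrow y$ and invoking continuity of $f$ on $\R^d$ and the segment-continuity of $\Phi$ (which yields $D_{\Phi}(y_t,x)\to D_{\Phi}(y,x)$). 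This is exactly property \ref{it:3}, which by the cycle already proved implies \ref{it:1}, completing the equivalences.
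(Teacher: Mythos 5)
Your proposal is correct, and while the easy implications (1)$\Rightarrow$(2)$\Rightarrow$(3) follow the same path as the paper (convex subgradient inequality combined with the exact Fr\'echet sum rule for the smooth summand $\rho\Phi$), your handling of the key implication (3)$\Rightarrow$(1) is genuinely different. The paper uses Rademacher's theorem to get nonemptiness of the convex subdifferential of $g=f+\rho\Phi$ on a dense subset of $\inter(\dom\Phi)$, passes to limits to get nonemptiness everywhere on the interior, invokes a criterion from Borwein--Lewis to conclude convexity of $g$ on $\inter(\dom \Phi)$, and then, using that $\partial\Phi$ (hence $\partial g$) is empty off the interior, argues that $\partial g$ is globally monotone and appeals to Rockafellar--Wets Theorem 12.17 to upgrade to convexity of $g$ on all of $\R^d$. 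You instead take the supremum $\tilde g$ of the affine minorants anchored at interior differentiability points, identify $\tilde g$ with $g$ on a dense set, hence on $\inter(\dom\Phi)$ by continuity, and then match the two at boundary points of $\dom\Phi$ by combining lower semicontinuity of $g$ with segment continuity of the closed convex function $\tilde g$ along segments issued from an interior point; since $\dom g=\dom\Phi$ is convex, $g=\tilde g+\delta_{\dom\Phi}$ is convex. This is a more elementary, self-contained convex-analytic argument that avoids the monotone-operator machinery at the cost of the explicit boundary-limit step, and all the auxiliary facts you use (full measure of $\dom(\nabla f)$, the line-segment principle, Rockafellar's Theorem 7.5 type continuity) are available in your setting. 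Your $C^2$ addendum is also sound: rather than rerunning the Theorem 12.17 argument as the paper does, you reduce the converse direction to item (3) by the same segment-limit device, which is legitimate because under the $C^2$ hypothesis every point of $\inter(\dom\Phi)$ is a differentiability point of $f$.
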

\begin{proof}
	Define the perturbed function $g:=f+\rho\Phi$. We prove the implications $\ref{it:1}\Rightarrow\ref{it:2}\Rightarrow\ref{it:3}\Rightarrow\ref{it:1}$ in order. To this end, suppose \ref{it:1} holds. Since $g$ is convex, the subgradient inequality holds:
	\begin{equation}\label{eqn:subgrad_ineq}
	g(y)\geq g(x)+\langle w,y-x\rangle\qquad  \textrm{ for all } x,y\in \R^d, w\in \partial g(x).
	\end{equation}
	Taking into account that $\Phi$ is differentiable on  $\inter(\dom \Phi)$, we deduce  $\hat{\partial} g(x)=\hat{\partial} f(x)+\rho\nabla \Phi(x)$  for all $x\in  \inter(\dom \Phi)$; see e.g. \cite[Exercise 8.8]{rock_wets}. Rewriting \eqref{eqn:subgrad_ineq} with this in mind immediately yields \ref{it:2}.
	The implication $\ref{it:2}\Rightarrow\ref{it:3}$ is immediate since $\hat \partial f(x)=\{\nabla f(x)\}$, whenever $f$ is differentiable at $x$.
	
	Suppose~\ref{it:3} holds. Fix an arbitrary point $x\in \inter(\dom \Phi)\cap \dom(\nabla f)$. Algebraic manipulation of inequality \eqref{eqn:lower_bound_breg2} yields the equivalent description
	\begin{equation}\label{eqn:rewrtite_subgrad}
	g(y)\geq g(x)+\langle \nabla f(x)+\rho\nabla \Phi(x),y-x\rangle\qquad  \textrm{for all } y\in  \dom \Phi.
	\end{equation}
	It follows that the vector $\nabla f(x)+\rho\nabla \Phi(x)$ lies in the convex subdifferential of $g$ at $x$. Since $f$ is locally Lipschitz continuous,  Rademacher's theorem shows that
$\dom (\nabla f)$ has full measure in $\R^d$. In particular, we deduce from~\eqref{eqn:rewrtite_subgrad} that the convex subdifferential of $g$ is nonempty on a dense subset of $\inter(\dom g)$. Taking limits, it quickly follows that the convex subdifferential of $g$ is nonempty at every point $x\in\inter(\dom g)$
	Using \cite[Exercise 3.1.12(a)]{JL}, we conclude that $g$ is convex on $\inter(\dom g)$. Moreover, appealing to the sum rule \cite[Exercise 10.10]{rock_wets}, we deduce that 
	$\partial g(x)=\emptyset $ for all $x\notin\inter(\dom \Phi)$, since $\partial \Phi(x)=\emptyset$ for all $x\notin\inter(\dom \Phi)$.
Therefore $\partial g$ is a globally monotone map globally. Appealing to \cite[Theorem 12.17]{rock_wets}, we conclude that $g$ is a convex function. Thus item \ref{it:1} holds. This completes the proof of the equivalences $\ref{it:1}\Leftrightarrow\ref{it:2}\Leftrightarrow\ref{it:3}$.

Finally suppose that $f$ and $\Phi$ are $C^2$-smooth on $\inter(\dom \Phi)$. Clearly,  if $f$ is $\rho$-weakly convex relative to $\Phi$, then second-order characterization of convexity of the function $g=f+\rho \Phi$ directly implies \eqref{eqn:second_orderchar}. Conversely, \eqref{eqn:second_orderchar} immediately implies that $g$ is convex on the interior of its domain. The same argument using \cite[Theorem 12.17]{rock_wets}, as in the implication  $\ref{it:3}\Rightarrow\ref{it:1}$, shows that $g$ is convex on all of $\R^d$.
\end{proof}

Notice that the setup so far has not relied on any predefined norm. Let us for the moment make the common assumption that $\Phi$ is  1-strongly convex relative to some norm $\|\cdot\|$ on $\R^d$, which implies 
\begin{equation} \label{eq:mirror_sc}
D_\Phi (y,x)  \ge \tfrac{1}{2} \|y-x\|^2.
\end{equation}
Then using Lemma~\ref{lem:char_subdiff}, we deduce that to check that $f$ is $\rho$-weakly convex relative to $\Phi$, it suffices to verify  the inequality
\begin{equation*}
f(y)\geq f(x)+\langle v,y-x\rangle-\frac{\rho}{2} \|y-x\|^2\qquad \textrm{  for all }x,y\in \dom \Phi, v\in\partial f(x).
\end{equation*}
Recall that a function $f\colon\R^d\to\R$ is called $\rho$-smooth if it satisfies:
$$\|\nabla f(y)-\nabla f(x)\|_*\leq \rho \|y-x\| \qquad \textrm{for all }x,y\in \R^d,$$ 
where $\|\cdot\|_*$ is the dual norm. Thus any $\rho$-smooth function $f$ is automatically $\rho$-weakly convex relative to $\Phi$. 
Our main result will not require $\Phi$ to be 1-strongly convex; however, we will impose this assumption in \Cref{sec:spec_anal_mirror descent} where we  augment our guarantees for the stochastic mirror descent algorithm under a differentiability assumption.

\section{The problem class and the algorithm}\label{sec:problem_class_alg}

We are now ready to introduce the problem class considered in this paper. We will be interested in the optimization problem
\begin{equation}
	\min_{x}~ F(x) := f(x) + r(x)   \label{eq:problem}
\end{equation}
where 
\begin{itemize}
\item $f\colon \Rd \to \R$ is a locally Lipschitz function,
\item $r\colon\R^d\to \R\cup\{\infty\}$ is a closed function having a convex domain,
\item $\Phi\colon\R^d\to\R\cup\{\infty\}$ is some Legendre function satisfying the compatibility conditions:
\begin{equation}\label{eqn:compatibility}
\ri(\dom r) \subseteq \inter(\dom \Phi)\qquad \textrm{and}\qquad\partial (r+\Phi)(x)=\emptyset \textrm{ for all } x\notin\inter(\dom \Phi).
\end{equation}
\end{itemize}
The first two items are standard and mild. The third stipulates that $r$ must be compatible with $\Phi$. In particular, the inclusion $\ri(\dom r) \subseteq \inter(\dom \Phi)$ automatically implies \eqref{eqn:compatibility}, whenever $r$ is convex \cite[Theorem 23.8]{rockafellar}, or more generally whenever a standard qualification condition holds.\footnote{Qualification condition: $\partial^{\infty}r(x)\cap -N_{\scriptsize{\dom} \Phi}(x)=\{0\}$, for all $x\in \dom r\cap \dom \Phi$; see \cite[Proposition 8.12, Corollary 10.9]{rock_wets}.}
To simplify notation, henceforth set $U:=\inter(\dom \Phi)$.

\subsection{Assumptions and the Algorithm}
We now specify the model-based algorithms we will analyze. Fix a probability space $(\Omega, \mathcal{F}, P)$ and equip $\Rd$ with the Borel $\sigma-$algebra. 
To each point $x \in \dom f$ and each random element $\xi \in \Omega$, we associate a stochastic one-sided model $f_x(\cdot, \xi)$ of the function $f$.  Namely, we assume that there exist $\tau,\rho,\lipsymb>0$ satisfying the following properties.

\begin{enumerate}[label=(A\arabic*)]
\item \textbf{(Sampling)} It is possible to generate i.i.d. realizations $\xi_1, \ldots, \xi_T \sim P$\label{assum:sampling}

\item \textbf{(One-sided accuracy)} \label{assum:one_sided}
There is a measurable function $(x,y,\xi) \mapsto f_x(y,\xi)$ defined on $U \times U \times \Omega$ satisfying both 
\[ \Exi{f_x(x, \xi) } = f(x), \qquad \forall x \in U  \cap \dom r\]
and
\begin{equation}\label{eqnupper_bound_model}
 \Exi{f_x(y, \xi) - f(y) }\le \tau D_\Phi (y,x), \qquad \forall x,y\in U   \cap \dom r. 
\end{equation}

\item\label{it:convex_model} \textbf{(Weak convexity of the models)} The functions $f_x(\cdot, \xi) + r(\cdot)$ are $\rho$-weakly convex relative to $\Phi$ for all $x\in U\cap \dom r$, and a.e. $\xi \in \Omega$. 

\item \textbf{(Lipschitzian property)} \label{assum:lip}
There exists a square integrable function $L \colon \Omega\to \R_+$ such that for all $x,y\in U\cap\dom r$, the following inequalities hold:
\begin{align}
f_x(x, \xi) - f_x(y, \xi)  &\le  L(\xi)\sqrt{D_{\Phi}(y,x)}, \label{eqn:lip_funcmodel}\\
\sqrt{\EE_{\xi}\left[L(\xi)^2\right]} &\leq \lipsymb \nonumber.
\end{align}
\end{enumerate}

Some comments are in order. Assumption~\ref{assum:sampling} is standard and is necessary for all sampling based algorithms. Assumption \ref{assum:one_sided}
specifies the accuracy of the models. That is, we require the model in expectation to agree with $f$ at the basepoint, and to globally lower-bound $f$ up to an error controlled by the Bregman divergence. Assumption~\ref{it:convex_model} is very  mild, since in most practical  circumstances the function $f_x(\cdot, \xi) + r(\cdot)$ is convex, i.e. $\rho=0$.
The final Assumption~\ref{assum:lip} controls the order of growth of the individual models $f_x(y,x)$ as the argument $y$ moves away from $x$.

Notice that the assumptions \ref{assum:sampling}-\ref{assum:lip} do not involve any norm on $\R^d$. However, when $\Phi$ is 1-strongly convex relative to some norm, the properties \eqref{eqnupper_bound_model} and \eqref{eqn:lip_funcmodel} are implied by standard assumptions. Namely \eqref{eqnupper_bound_model} holds if the error in the model approximation satisfies 
$$\Exi{f_x(y, \xi) - f(y) }\le \frac{\tau}{2} \|y-x\|^2, \qquad \forall x,y\in U.$$
Similarly \eqref{eqn:lip_funcmodel} will hold as long as for every $x\in U\cap\dom r $ and a.e. $\xi\in \Omega$ the models $f_x(\cdot,\xi)$ are $L(\xi)$-Lipschitz continuous on $U$ in the norm $\|\cdot\|$. The use of the Bregman divergence allows for much greater flexibility as it can, for example, model higher order growth of the functions in question. To illustrate, let us look at the following example where the Lipschitz constant $L(\xi)$ of the models $f_x(\cdot, \xi)$ is bounded by a polynomial.

\begin{example}[Bregman divergence under polynomial growth]\label{ex:ex_polygrowth}{\rm 
Consider a degree $n$ univariate polynomial  $$p(u)=\sum_{i=0}^n a_i u^i,$$
with coefficients $a_i\geq 0$. Suppose now that the one-sided Lipschitz constants of the models satisfy the growth property:
\begin{align*}
\frac{f_{x}(x, \xi) - f_{x}(y, \xi)}{\|x-y\|_2} \leq  L(\xi) \sqrt{\frac{p(\|x\|_2)+p(\|y\|_2)}{2}} \qquad \textrm{ for all distinct } x,y\in\R^d.
\end{align*}

Motivated by \cite[Proposition~5.1]{Lu_mirror_weird}, the following proposition constructs a Bregman divergence that is well-adapted to the polynomial $p(\cdot)$. We defer its proof  to Appendix~\ref{appendix:bregpoly}. In particular, with the choice of the Legendre function $\Phi$ in \eqref{eqn:legendre_func_poly_growth}, the required estimate \eqref{eqn:lip_funcmodel} holds.
\begin{proposition}\label{prop:bregpoly}
Define the convex function
\begin{equation}\label{eqn:legendre_func_poly_growth}
\Phi(x) = \sum_{i=0}^n a_i\left(\frac{3i + 7}{i+2} \right)\|x\|_2^{i+2}.
\end{equation}
Then for all $x, y \in \RR^d$, we have
$$
D_{\Phi}(y, x) \geq  \frac{p(\|x\|_2)+p(\|y\|_2)}{2} \cdot \|x - y\|_2^2,
$$
and therefore the estimate \eqref{eqn:lip_funcmodel} holds.
\end{proposition}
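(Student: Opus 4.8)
The plan is to reduce the statement, by linearity of the Bregman divergence in its generating kernel, to a one-dimensional inequality for each monomial $\|x\|_2^{i+2}$. Put $q_i := i+2$ and $c_i := \frac{3i+7}{i+2}$, and note that $c_i \ge 3$ since $3i+7 \ge 3(i+2)$. Then $\Phi = \sum_{i=0}^{n} a_i\,\phi_i$ with $\phi_i(x) := c_i\,\|x\|_2^{q_i}$, and by linearity of the Bregman divergence in its kernel, $D_\Phi(y,x) = \sum_{i=0}^n a_i\,D_{\phi_i}(y,x)$. Hence, with the convention $\|x\|_2^0 \equiv 1$, it suffices to prove that for every integer $q \ge 2$, writing $c := \frac{3q+1}{q}$ (so $c \ge 3$), one has
\[
D_\phi(y,x) \ \ge\ \frac{\|x\|_2^{q-2}+\|y\|_2^{q-2}}{2}\,\|x-y\|_2^2, \qquad \phi(x) := c\,\|x\|_2^{q},
\]
and then to sum these estimates against the nonnegative coefficients $a_i$; this yields $D_\Phi(y,x) \ge \frac{p(\|x\|_2)+p(\|y\|_2)}{2}\|x-y\|_2^2$. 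Granting this, the final claim of the proposition follows at once: multiplying the assumed growth bound on the one-sided Lipschitz constants by $\|x-y\|_2$ and inserting the displayed bound gives
\[
f_x(x,\xi)-f_x(y,\xi) \ \le\ L(\xi)\,\sqrt{\tfrac{p(\|x\|_2)+p(\|y\|_2)}{2}}\;\|x-y\|_2 \ \le\ L(\xi)\sqrt{D_\Phi(y,x)},
\]
which is exactly \eqref{eqn:lip_funcmodel}.

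For the scalar bound I would use $\nabla\phi(x) = cq\,\|x\|_2^{q-2}x$ to compute, writing $r := \|x\|_2$, $s := \|y\|_2$ and (assuming $r,s>0$) $u := \langle x,y\rangle/(rs) \in [-1,1]$,
\[
D_\phi(y,x) \ =\ c\bigl(s^q + (q-1)r^q - q\,r^{q-1}s\,u\bigr), \qquad \|x-y\|_2^2 \ =\ r^2 + s^2 - 2rs\,u .
\]
(When $q=2$ this reads $D_\phi(y,x) = c\,\|x-y\|_2^2$ and the bound is trivial since $c \ge 1$; so assume $q \ge 3$ henceforth, the cases $r=0$ or $s=0$ being immediate.) The crucial observation is that both displayed quantities are affine in $u$, so the difference of the two sides of the scalar bound is affine in $u$ and therefore attains its minimum over $u \in [-1,1]$ at one of the endpoints $u = 1$ or $u = -1$. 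Thus it remains only to verify the two endpoint inequalities.

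At $u = 1$, after cancelling the nonnegative factor $(s-r)^2$, the inequality concerns $s^q - q\,r^{q-1}s + (q-1)r^q$, and here I would invoke the polynomial identity
\[
s^q - q\,r^{q-1}s + (q-1)\,r^q \ =\ (s-r)^2\sum_{j=0}^{q-2}(q-1-j)\,r^{q-2-j}s^j ,
\]
which is confirmed by expanding the right-hand side and matching the coefficient of $r^{q-k}s^k$ for each $k$ (equivalently, $s=r$ is a double zero of the left side, and one identifies the quotient). Keeping just the $j=0$ and $j=q-2$ summands (distinct since $q\ge3$) bounds the sum below by $(q-1)r^{q-2}+s^{q-2} \ge \tfrac12(r^{q-2}+s^{q-2})$, and $c \ge 1$ then closes this case. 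At $u = -1$ the inequality reads
\[
c\bigl(s^q + (q-1)r^q + q\,r^{q-1}s\bigr) \ \ge\ \frac{r^{q-2}+s^{q-2}}{2}(r+s)^2 ;
\]
I would discard the nonnegative term $c\,q\,r^{q-1}s$ on the left, expand the right, and bound each mixed term $r^\alpha s^\beta$ with $\alpha,\beta \ge 1$ and $\alpha+\beta = q$ by Young's inequality $r^\alpha s^\beta \le \tfrac\alpha q\,r^q + \tfrac\beta q\,s^q$. A short bookkeeping then shows the right-hand side is at most $2r^q + 2s^q$, which is dominated by $c\,s^q + c(q-1)r^q$ because $c \ge 2$ and $c(q-1) \ge 2$.

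The two steps that require genuine care are the reduction to the endpoints $u = \pm 1$ via affineness — this is what collapses the multivariate inequality to two short checks — and the factorization identity used at $u = 1$; the remaining manipulations are routine. (Incidentally, $\Phi$ is indeed a Legendre function: its domain is all of $\R^d$, so the essential-smoothness limit condition is vacuous; $\Phi$ is differentiable everywhere because each $q_i \ge 2$; and it is strictly convex thanks to the summand $\tfrac72\|x\|_2^2$.)
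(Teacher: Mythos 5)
Your proof is correct, and it takes a genuinely different route from the paper. The paper splits $\Phi$ into two pieces, $\widehat\Phi(x)=\sum_i \frac{a_i}{i+2}\|x\|_2^{i+2}$ and $\widetilde\Phi(x)=\sum_i 3a_i\|x\|_2^{i+2}$, handles the $p(\|x\|_2)$ half by directly citing Lu's estimate (Equation~(25) of \cite{Lu_mirror_weird}), and handles the $p(\|y\|_2)$ half monomial by monomial via a case analysis on whether $\|y\|_2\lessgtr\eta^{1/i}\|x\|_2$ with $\eta=3(i+2)$: in one regime it again invokes \cite[Proposition~5.1]{Lu_mirror_weird}, and in the other it proves the quasi-symmetry bound $D_{\widetilde\Phi_i}(y,x)\geq \eta^{-1}D_{\widetilde\Phi_i}(x,y)$ and then applies Lu's result with the arguments swapped. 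You instead work with each monomial $c_i\|x\|_2^{i+2}$ whole, observe that for fixed $\|x\|_2=r$, $\|y\|_2=s$ both $D_\phi(y,x)$ and $\|x-y\|_2^2$ are affine in the correlation $u=\langle x,y\rangle/(rs)$, and reduce everything to the two endpoint inequalities at $u=\pm1$, which you settle with the factorization $s^q-qr^{q-1}s+(q-1)r^q=(s-r)^2\sum_{j=0}^{q-2}(q-1-j)r^{q-2-j}s^j$ and a Young-inequality bookkeeping; I checked both endpoint computations and the constants ($c\geq 1$ at $u=1$; $c\geq 2$ and $c(q-1)\geq 2$ at $u=-1$) and they go through, with some slack since $c=3+1/q$. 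What each approach buys: the paper's argument is shorter on the page because it outsources the hard single-monomial estimates to \cite{Lu_mirror_weird}, whereas yours is fully self-contained, avoids the split of $\Phi$ and the case analysis on the ratio of norms, treats the $p(\|x\|_2)$ and $p(\|y\|_2)$ contributions symmetrically in one pass, and even suggests the coefficient $\frac{3i+7}{i+2}$ is not tight for your argument. One small caveat on your closing parenthetical: strict convexity of $\Phi$ via the $\tfrac72\|x\|_2^2$ summand presumes $a_0>0$; if $a_0=0$ one should instead note that any nonzero summand $\|x\|_2^{i+2}$ is itself strictly convex (and if all $a_i=0$ the claimed inequality is vacuous), but this is immaterial to the divergence bound itself.
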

}
\end{example}

The final ingredient we need before stating the algorithm is an estimate on the weak convexity constant of $F$. The following simple lemma shows that Assumptions~\ref{assum:one_sided} and~\ref{it:convex_model} imply that $F$ itself is $(\tau + \rho)$-weakly convex relative to $\Phi$.
\begin{lemma}\label{lem:weak_conv}
	The function $F$ is $(\tau + \rho)$-weakly convex relative to $\Phi$. 
\end{lemma}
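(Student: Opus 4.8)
The plan is to prove that $F = f + r$ is $(\tau+\rho)$-weakly convex relative to $\Phi$ by verifying the subdifferential characterization in item \ref{it:2} of Lemma~\ref{lem:char_subdiff}, i.e.\ that $F(y) \geq F(x) + \langle v, y-x\rangle - (\tau+\rho)D_\Phi(y,x)$ for all $x \in U$, $y \in \dom\Phi$, and $v \in \hat\partial F(x)$. The natural route is to expose $F$ as an expectation of the weakly convex models. First I would fix an arbitrary base point $x \in U \cap \dom r$ (outside $\dom r$ there is nothing to prove since $F \equiv +\infty$ there and $\hat\partial F(x)$ interacts trivially) and consider the function $y \mapsto \Exi{f_x(y,\xi) + r(y)}$. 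By Assumption~\ref{it:convex_model}, each $f_x(\cdot,\xi) + r(\cdot)$ is $\rho$-weakly convex relative to $\Phi$, so applying inequality \eqref{eqn:lower_bound_breg} pointwise in $\xi$ and taking expectations (using that $\Phi$ does not depend on $\xi$) shows the expected model $\bar g_x(y) := \Exi{f_x(y,\xi)} + r(y)$ is $\rho$-weakly convex relative to $\Phi$.

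Next I would compare $\bar g_x$ with $F$. By the one-sided accuracy Assumption~\ref{assum:one_sided}, $\Exi{f_x(x,\xi)} = f(x)$, so $\bar g_x(x) = F(x)$; and $\Exi{f_x(y,\xi)} \geq f(y) - \tau D_\Phi(y,x)$ for all $y \in U \cap \dom r$ — wait, the inequality in \eqref{eqnupper_bound_model} goes the other way, giving $\Exi{f_x(y,\xi)} \le f(y) + \tau D_\Phi(y,x)$, i.e.\ $\bar g_x(y) \le F(y) + \tau D_\Phi(y,x)$, which is exactly what we want: the model is an \emph{under}-estimate only in the opposite sense is not needed. So for any $v \in \hat\partial f(x)$, one checks $v \in \hat\partial \bar g_x(x)$ (since $\bar g_x$ and $f + r$ have the same value at $x$ and $\bar g_x \le f + r + \tau D_\Phi(\cdot,x)$ near $x$, with $D_\Phi(\cdot,x)$ having zero Fréchet subgradient at $x$ because $\nabla\Phi$ exists there). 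Applying the relative-weak-convexity inequality for $\bar g_x$ with constant $\rho$ at the point $x$ gives $\bar g_x(y) \geq \bar g_x(x) + \langle v, y-x\rangle - \rho D_\Phi(y,x)$, and combining with $F(y) \geq \bar g_x(y) - \tau D_\Phi(y,x)$ and $\bar g_x(x) = F(x)$ yields $F(y) \geq F(x) + \langle v, y-x\rangle - (\tau+\rho)D_\Phi(y,x)$, which is \eqref{eqn:lower_bound_breg} for $F$ with constant $\tau+\rho$. Then Lemma~\ref{lem:char_subdiff} (\ref{it:2}$\Rightarrow$\ref{it:1}) concludes that $F$ is $(\tau+\rho)$-weakly convex relative to $\Phi$.

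The main obstacle I anticipate is the subdifferential bookkeeping at the endpoints of the equivalence: verifying that a Fréchet subgradient $v$ of $f$ at $x$ transfers to a Fréchet subgradient of $\bar g_x$ at $x$, and handling the fact that Lemma~\ref{lem:char_subdiff} is stated for locally Lipschitz functions $f \colon \Rd \to \R$ whereas $F$ takes the value $+\infty$ (so one should really work with the locally Lipschitz part $f$ plus the closed convex-domain part $r$, or invoke the lemma's characterization only on $\inter(\dom\Phi) \cap \dom r$ where everything is finite). A cleaner alternative that sidesteps subgradient manipulations entirely: apply Lemma~\ref{lem:char_subdiff} directly to $f$ (establishing $f$ is $\tau$-weakly convex relative to $\Phi$ via \eqref{eqnupper_bound_model} and the averaging argument — though note \eqref{eqnupper_bound_model} alone is a one-sided bound and does not immediately give weak convexity of $f$), or, most robustly, simply note that $f + r + (\tau+\rho)\Phi = \Exi{f_x(\cdot,\xi) + r(\cdot) + \rho\Phi} + (f + \tau\Phi - \Exi{f_x(\cdot,\xi)})$ where the first expectation is convex by Assumption~\ref{it:convex_model} and averaging, and argue the second term is convex as well — but since that second term need not be convex, the safe path remains the pointwise Fréchet-subdifferential argument above, localized to $U \cap \dom r$. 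I would therefore present the proof as: average the weak-convexity inequality \eqref{eqn:lower_bound_breg} over $\xi$ for the models, add the one-sided accuracy bound, and invoke Lemma~\ref{lem:char_subdiff}.
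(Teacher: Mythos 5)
There is a genuine gap, and it sits at the pivotal step of your argument: the claimed transfer $v\in\hat\partial F(x)\Rightarrow v\in\hat\partial \bar g_x(x)$. The two facts you cite --- $\bar g_x(x)=F(x)$ and $\bar g_x\le F+\tau D_\Phi(\cdot,x)$ --- give an inclusion in the \emph{opposite} direction: a function that is majorized by $F+\tau D_\Phi(\cdot,x)$ and touches it at $x$ can only pass its subgradients \emph{up}, i.e.\ $\hat\partial \bar g_x(x)\subseteq \hat\partial\bigl(F+\tau D_\Phi(\cdot,x)\bigr)(x)=\hat\partial F(x)$, never the reverse. The reverse inclusion is false within the paper's own framework: take $d=1$, $r\equiv 0$, $f(t)=|t|$, $\Phi=\tfrac12 t^2$, and the mirror-descent models $f_x(y,\xi)=f(x)+G(x)(y-x)$ with $G(x)\in\partial f(x)$ and $G(0)=0$. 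Then \ref{assum:one_sided} and \ref{it:convex_model} hold, yet $\hat\partial F(0)=[-1,1]$ while $\hat\partial \bar g_0(0)=\{0\}$, so for $v=1$ your chain $F(y)\ge \bar g_x(y)-\tau D_\Phi(y,x)\ge \bar g_x(x)+\langle v,y-x\rangle-(\tau+\rho)D_\Phi(y,x)$ cannot be started (the target inequality happens to be true, but not by this route). The expected model only encodes \emph{one} lower approximation of $F$ at $x$; it carries no information about every Fr\'echet subgradient of $F$, which is what item \ref{it:2} of Lemma~\ref{lem:char_subdiff} quantifies over.

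The second unresolved issue is the one you flag yourself but do not repair: Lemma~\ref{lem:char_subdiff} is stated and proved for finite-valued, locally Lipschitz functions (its proof of $\ref{it:2}\Rightarrow\ref{it:1}$ leans on Rademacher's theorem and a density argument), so the implication cannot simply be ``invoked, localized to $U\cap\dom r$'' for the extended-valued $F$. Passing from a subgradient inequality valid on $U\cap\dom r$ to convexity of $F+(\tau+\rho)\Phi$ on all of $\R^d$ is exactly where the compatibility condition \eqref{eqn:compatibility} enters: one must rule out limiting subgradients at points of $\dom r\cap\dom\Phi$ outside $\inter(\dom\Phi)$ (via the sum rule), establish monotonicity of the subdifferential, and then apply \cite[Theorem 12.17]{rock_wets}. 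Your proposal never uses \eqref{eqn:compatibility}, so this globalization is missing. For contrast, the paper avoids subgradients of $F$ altogether: it verifies the convex-combination inequality for $F+(\tau+\rho)\Phi$ on $\ri(\dom F)$ directly, using the models centered at the midpoint $\bar x=\lambda x+(1-\lambda)y$ (so only the basepoint exactness $\EE_\xi[f_{\bar x}(\bar x,\xi)]=f(\bar x)$ and the upper bound \eqref{eqnupper_bound_model} at the endpoints are needed), and then globalizes with \eqref{eqn:compatibility} and \cite[Theorem 12.17]{rock_wets}. A corrected version of your approach would have to be restructured in that spirit (for instance, using the nonempty subdifferential of the convex function $\bar g_x+\rho\Phi$ at points of $U\cap\ri(\dom r)$, which does yield valid global inequalities for $F$, followed by the same boundary and monotonicity argument), rather than relying on subgradients of $F$ itself.
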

\begin{proof}
We first show that the function $g:=F +(\rho+\tau)\Phi$ is convex on $\ri(\dom F)$. To this end, 
fix arbitrary points $x,y \in \ri(\dom g)$, and note the equality $\ri(\dom g)=U\cap\ri(\dom r)$ \cite[Theorem 6.5]{rockafellar}. Choose $\lambda\in (0,1)$ and set $\bar x=\lambda x+(1-\lambda)y$. 
Taking into account \ref{it:convex_model}, we deduce 
\begin{equation}
\label{eqn:long_ver}
\begin{aligned}
g(\bar x)&=f(\bar x) + r(\bar x) +(\rho+\tau)\Phi(\bar x)\\
&=\EE_{\xi}[f_{\bar x}(\bar x,\xi) + r(\bar x) +\rho\Phi(\bar x)]+\tau\Phi(\bar x)\\
&\leq \EE_{\xi} [\lambda (f_{\bar x}(x,\xi) + r( x)+\rho\Phi(x))+(1-\lambda)(f_{\bar x}(y, \xi)  + r(y) +\rho\Phi(y))]+\tau\Phi(\bar x)\\
&=\lambda \EE_{\xi}[f_{\bar x}(x,\xi) + r(x)]+(1-\lambda)\EE_{\xi}[f_{\bar x}(y,\xi) + r(y)]+\tau\Phi(\bar x)+\lambda\rho\Phi(x)+(1-\lambda)\rho\Phi(y)\\
&=\lambda \EE_{\xi}[f_{\bar x}(x,\xi) + r(x)-\tau D_{\Phi}(x,\bar x)]+(1-\lambda)\EE_{\xi}[f_{\bar x}(y,\xi) + r(y)-\tau D_{\Phi}(y,\bar x)]\\
&\qquad+\lambda\tau(\Phi(\bar x)+ D_{\Phi}(x,\bar x))+(1-\lambda)\tau(\Phi(\bar x)+ D_{\Phi}(y,\bar x))+\lambda\rho\Phi(x)+(1-\lambda)\rho\Phi(y).
\end{aligned}
\end{equation}
Now observe
$$\Phi(\bar x)+ D_{\Phi}(x,\bar x)=\Phi(x)-(1-\lambda)\langle \nabla \Phi(\bar x),x-y\rangle,$$
and similarly $$\Phi(\bar x)+ D_{\Phi}(y,\bar x)=\Phi(y)-\lambda\langle \nabla \Phi(\bar x),y-x\rangle.$$
Hence algebraic manipulation of the two equalities above yields the expression $$\lambda\tau(\Phi(\bar x)+ D_{\Phi}(x,\bar x))+(1-\lambda)\tau(\Phi(\bar x)+ D_{\Phi}(y,\bar x))=\lambda\tau\Phi(x)+(1-\lambda)\tau\Phi(y).$$
Continuing with \eqref{eqn:long_ver}, we obtain
\begin{align*}
g(\bar x)&\leq \lambda f(x) + r(x) +(1-\lambda) (f(y) + r(y))\\
&\qquad+\lambda\tau\Phi(x)+(1-\lambda)\tau\Phi(y)+\lambda\rho\Phi(x)+(1-\lambda)\rho\Phi(y)\\
&=\lambda[f(x)+ r(x) +(\tau+\rho)\Phi(x)]+(1-\lambda)[f(y) + r(y)+(\tau+\rho)\Phi(y)]\\
&\leq \lambda g(x)+(1-\lambda) g(y).
\end{align*}
We have thus verified that $g$ is convex on $\ri(\dom g)$.
Appealing to \eqref{eqn:compatibility} and the sum rule \cite[Exercise 10.10]{rock_wets}, we deduce that the subdifferential $\partial g(x)$ is empty at every point in $x\notin \ri(\dom g)$, and therefore $\partial g$ is a globally monotone map. Using \cite[Theorem 12.17]{rock_wets}, we conclude that $g$ is a convex function, as needed.
\end{proof}

In light of Lemma~\ref{lem:weak_conv}, we also make the following additional assumption on the solvability of the Bregman proximal subproblems.

\begin{enumerate}[label=(A\arabic*)]
\item[(A5)]\label{it:solvability} \textbf{(Solvability)} The convex problems 
$$\min_y\left\{F(y)+\frac{1}{\lambda}D_{\Phi}(y,x)\right\} \qquad \textrm{and}\qquad~\min_y\left\{f_x(y, \xi)+r(y)+\frac{1}{\lambda}D_{\Phi}(y,x)\right\},$$
admit a minimizer for any $\lambda<(\tau+\rho)^{-1}$, any $x\in U$, and a.e. $\xi\in \Omega$.\footnote{Note the minimizers are automatically unique by Lemma~\ref{lem:threepoint}} The minimizers vary measurably in $(x,\xi)\in  U\times \Omega$.
\end{enumerate}

Assumption (A5) is very mild. In particular, it holds automatically if $(i)$ $\Phi$ is strongly convex with respect to some norm, or if $(ii)$ the functions $f_x(\cdot, \xi)+r(\cdot)+\rho D_{\Phi}(\cdot,x)$ and $F+(\tau+\rho)\Phi$ are bounded from below and $\Phi$ has bounded sublevel sets \cite[Lemma 2.3]{Teboulle2018}.

We are now ready to state the stochastic model-based algorithm we analyze---Algorithm~\ref{alg:stoc_prox}.
\smallskip

	\begin{algorithm}[H]
		\KwData{$x_0\in U\cap\dom r$,  real $\lambda < (\tau + \rho)^{-1}$, a nonincreasing sequence $\{\eta_t\}_{t\geq 0} \subseteq (0,\lambda)$, and iteration count $T$.}
		{\bf Step } $t=0,\ldots,T$:\\		
		\begin{equation*}\left\{
		\begin{aligned}
		&\textrm{Sample } \xi_t \sim P\\
		& \textrm{Set } x_{t+1} = \argmin_{x}~ \left\{f_{x_t}(x,\xi_t)+r(x) + \tfrac{1}{\eta_t} D_\Phi(x, x_t)\right\}
		\end{aligned}\right\},
		\end{equation*}
		Sample $t^*\in \{0,\ldots,T\}$ according to the discrete probability distribution
		$$\mathbb{P}(t^*=t)\propto \frac{\eta_t}{1-\eta_t \rho}.$$
		{\bf Return} $x_{t^*}$		
		\caption{Stochastic Model Based Minimization
		}
		\label{alg:stoc_prox}
	\end{algorithm}

\subsection{Examples}\label{sec:examples}
Before delving into the convergence analysis of Algorithm~\ref{alg:stoc_prox}, in this section we illustrate the algorithmic framework on four examples. In all cases, assumptions 
\ref{assum:sampling} and (A5) are self-explanatory. Therefore, we only focus on verifying \ref{assum:one_sided}-\ref{assum:lip}. For simplicity, we also assume that $r(\cdot)$ is convex in all examples.

\paragraph{Stochastic Bregman-proximal point.}
Suppose that the models $(x,y,\xi) \mapsto f_x(y,\xi)$ satisfy $$\EE_{\xi}[f_x(y,\xi)]=f(y)\qquad \forall x,y\in U\cap \dom r.$$
With this choice of the models, Algorithm~\ref{alg:stoc_prox} becomes the stochastic Bregman-proximal point method. Analysis of the deterministic version of the method for convex problems goes back to \cite{breg_marc,CZ_prox,Eck_breg}.  Observe that Assumption~\ref{assum:one_sided} holds trivially. 
Assumption~\ref{it:convex_model} and Assumption~\ref{assum:lip} should be verified in particular circumstances, depending on how the models are generated. In particular, one can verify Assumption~\ref{assum:lip} under polynomial growth of the Lipschitz constant, by appealing to Example~\ref{ex:ex_polygrowth}.

\paragraph{Stochastic mirror descent.}
Suppose that the models $(x,y,\xi) \mapsto f_x(y,\xi)$ are given by 
$$f_x(y,\xi)=f(x)+\langle G(x,\xi),y-x\rangle,$$
for some  measurable mapping $G \colon U \times \Omega \rightarrow \RR^d$ satisfying  $\EE_{\xi}[G(x,\xi)]\in \partial f(x)$ for all $x\in U\cap \dom r$. Algorithm~\ref{alg:stoc_prox} then becomes the stochastic mirror descent algorithm, classically studied in \cite{nem_yud,beckteb} in the convex setting and more recently analyzed in \cite{rel_smooth_freund,descentBBT,Lu_mirror_weird} under convexity and relative continuity assumptions. Assumption~\ref{assum:one_sided} simply says that $f$ is $\tau$-weakly convex relative to $\Phi$, while Assumption~\ref{it:convex_model} holds trivially with $\rho=0$.
Assumption~\ref{assum:lip} is directly implied by the relative continuity condition of Lu~\cite{Lu_mirror_weird}. Namely it suffices to assume that there is a square integrable function $L\colon\Omega \rightarrow \RR_{++}$ satisfying
$$
\|G(x, \xi)\|_\ast \leq L(\xi)\frac{\sqrt{D(y, x)}}{\|y - x\|} \qquad\forall x,y\in U,
$$
 where $\|\cdot\|$ is an arbitrary norm on $\R^d$, and $\|\cdot\|_\ast$ is the dual norm. We refer to \cite{Lu_mirror_weird} for more details on this condition and examples.

\paragraph{Gauss-Newton method with Bregman regularization.}
In the next example, suppose that $f$ has the composite  form 
$$f(x)=\mathbb{E}_{\xi}[h(c(x,\xi),\xi)],$$
for some measurable function $h(y,\xi)$ that is convex in $y$ for a.e. $\xi\in \Omega$ and a measurable map $c(x,\xi)$ that is $C^1$-smooth in $x$ for a.e. $\xi\in \Omega$. We may then use the convex models
$$f_{x}(y,\xi):=h\left(c(x,\xi)+\nabla c(x,\xi)(y-x) ,\xi),\xi\right),$$
which automatically satisfy~\ref{it:convex_model} with $\rho=0$.
Algorithm~\ref{alg:stoc_prox} then becomes a stochastic Gauss-Newton method with Bregman regularization.

In the Euclidean case $\Phi=\frac{1}{2}\|\cdot\|^2$, the method reduces to the stochastic prox-linear algorithm, introduced  in \cite{duchi_ruan} and further analyzed in \cite{stochastic_subgrad}. The deterministic prox-linear method has classical roots, going back at least to \cite{burke_com,fletcher_back,pow_glob}, while a more modern complexity theoretic perspective appears  in \cite{prox,prox_error,composite_cart,nest_GN,prox_dim_court}. 
Even in the deterministic setting, to make progress, one typically assumes that $h$ and $\nabla c$ are globally Lipschitz.
 More generally and in line with our current work,  one may introduce a different Legendre function $\Phi$. For example, in the case of polynomial growth, the following propositions  construct Legendre functions that are compatible with Assumptions~\ref{assum:one_sided} and~\ref{assum:lip}.  We defer their proofs to Appendix~\ref{appendix:prox_linear_accuracy}.  In the two propositions, we assume that the outer functions $h(\cdot,\xi)$ are globally Lipschitz, while the inner maps $c(\cdot,\xi)$ may have a high order of growth. It is possible to also analyze the setting when $h(\cdot,\xi)$ has polynomial growth, but the resulting statements  and assumptions become much more cumbersome; we therefore omit that discussion.

\begin{proposition}[Satisfying~\ref{assum:one_sided}] \label{prop:prox_linear_accuracy}
Suppose  there are square integrable functions $L_1, L_2 \colon \Omega \rightarrow \RR_+$ and a univariate polynomial $p(u)=\sum_{i=0}^n a_i u^i$ with nonnegative coefficients satisfying
\begin{align*}
\frac{|h(v,\xi) - h(w,\xi)|}{\|v - w\|_2} &\leq L_1(\xi) \qquad \forall v\neq w,\\
\frac{\|\nabla c(x,\xi) - \nabla c(y,\xi) \|_{{\rm op}}}{\|x - y\|_2} &\leq L_2(\xi) (p(\|x\|_2) + p(\|y\|_2))\qquad \forall x\neq y.
\end{align*}
Define the Legendre function
$
\Phi(x) := \sum_{i = 0}^n \frac{a_i(3i + 7)}{i+2}\|x\|_2^{i+2}.
$
Then  assumption~\ref{assum:one_sided} holds with $\tau := \tfrac{4}{3}\EE\left[L_1(\xi)L_2(\xi)\right]$.

\end{proposition}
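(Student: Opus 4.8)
The plan is to establish the two required estimates in Assumption~\ref{assum:one_sided} separately and pointwise in $\xi$, then take expectations. Fix $\xi \in \Omega$ and write $f_x(y,\xi) = h(c(x,\xi) + \nabla c(x,\xi)(y-x), \xi)$. The equality $\Exi{f_x(x,\xi)} = f(x)$ is immediate since $f_x(x,\xi) = h(c(x,\xi),\xi)$ and $f(x) = \Exi{h(c(x,\xi),\xi)}$ by definition. The substance is the one-sided bound \eqref{eqnupper_bound_model}. First I would bound the per-sample error $f_x(y,\xi) - h(c(y,\xi),\xi)$: since $h(\cdot,\xi)$ is $L_1(\xi)$-Lipschitz in the Euclidean norm,
\[
f_x(y,\xi) - h(c(y,\xi),\xi) \le L_1(\xi)\,\big\| c(x,\xi) + \nabla c(x,\xi)(y-x) - c(y,\xi)\big\|_2.
\]
The vector inside the norm is the first-order Taylor remainder of $c(\cdot,\xi)$ at $x$ evaluated at $y$. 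Using the integral form of the remainder, $c(y,\xi) - c(x,\xi) - \nabla c(x,\xi)(y-x) = \int_0^1 \big(\nabla c(x + t(y-x),\xi) - \nabla c(x,\xi)\big)(y-x)\,dt$, together with the growth hypothesis on $\nabla c(\cdot,\xi)$, I would bound the operator-norm of the bracketed matrix by $L_2(\xi)\big(p(\|x + t(y-x)\|_2) + p(\|x\|_2)\big)\cdot t\|y-x\|_2$ and integrate in $t$.

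The remaining work is purely a one-variable integral estimate: controlling $\int_0^1 t\,\big(p(\|x + t(y-x)\|_2) + p(\|x\|_2)\big)\,dt$ by a constant multiple of $p(\|x\|_2) + p(\|y\|_2)$. Since $p$ has nonnegative coefficients, each monomial $\|\cdot\|_2^i$ is convex, so $\|x + t(y-x)\|_2^i \le (1-t)\|x\|_2^i + t\|y\|_2^i \le \|x\|_2^i + \|y\|_2^i$; summing against the $a_i$ gives $p(\|x+t(y-x)\|_2) \le p(\|x\|_2) + p(\|y\|_2)$ pointwise in $t$. Hence the whole expression is bounded by $\tfrac{1}{2}\cdot L_2(\xi)\big(2p(\|x\|_2) + p(\|y\|_2)\big)\|y-x\|_2^2 \le L_2(\xi)\big(p(\|x\|_2)+p(\|y\|_2)\big)\|y-x\|_2^2$ (the constant can be tracked more carefully, but a clean bound of this form suffices, and in fact one can do slightly better with the $t$-weight). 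Combining, $f_x(y,\xi) - h(c(y,\xi),\xi) \le L_1(\xi)L_2(\xi)\big(p(\|x\|_2)+p(\|y\|_2)\big)\|y-x\|_2^2$. Taking $\Exi{\cdot}$ and using independence structure (or simply that the bound is deterministic in $x,y$ after pulling out constants, then bounding $\Exi{L_1(\xi)L_2(\xi)\cdot(\cdots)}$) yields $\Exi{f_x(y,\xi) - f(y)} \le \EE[L_1(\xi)L_2(\xi)]\,\big(p(\|x\|_2)+p(\|y\|_2)\big)\|y-x\|_2^2$.

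The final step is to recognize the right-hand side as a multiple of $D_\Phi(y,x)$. Here I would invoke Proposition~\ref{prop:bregpoly}: with $\Phi(x) = \sum_{i=0}^n \frac{a_i(3i+7)}{i+2}\|x\|_2^{i+2}$ — precisely the Legendre function in the statement — we have $D_\Phi(y,x) \ge \tfrac{1}{2}\big(p(\|x\|_2)+p(\|y\|_2)\big)\|x-y\|_2^2$. Therefore $\big(p(\|x\|_2)+p(\|y\|_2)\big)\|y-x\|_2^2 \le 2 D_\Phi(y,x)$, and chasing through the constant from the integral estimate (which I would redo carefully, exploiting the $t$-weight to get the factor $4/3$ rather than the crude bound above) gives $\Exi{f_x(y,\xi)-f(y)} \le \tfrac{4}{3}\EE[L_1(\xi)L_2(\xi)]\,D_\Phi(y,x) = \tau D_\Phi(y,x)$, as claimed. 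The main obstacle is just bookkeeping of the numerical constant: one must compute $\int_0^1 t\big(p(\|x+t(y-x)\|_2) + p(\|x\|_2)\big)\,dt$ sharply enough — using $\|x+t(y-x)\|_2^i \le (1-t)\|x\|_2^i + t\|y\|_2^i$ inside the integral and then evaluating $\int_0^1 t(1-t)\,dt$ and $\int_0^1 t^2\,dt$ — so that the constant matches the $4/3$ and so that it is compatible with the constant $\tfrac12$ coming out of Proposition~\ref{prop:bregpoly}; everything else (Lipschitzness, Taylor remainder, convexity of norm powers) is routine.
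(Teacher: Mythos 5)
Your proposal is correct and follows essentially the same route as the paper's proof: bound the Taylor remainder of $c(\cdot,\xi)$ via the fundamental theorem of calculus and the polynomial growth of $\nabla c$, use convexity of $u\mapsto p(\|u\|_2)$ inside the $t$-weighted integral (yielding the factor $\tfrac{2}{3}$), compose with the $L_1(\xi)$-Lipschitzness of $h(\cdot,\xi)$, and convert to $D_\Phi(y,x)$ via Proposition~\ref{prop:bregpoly}, which doubles the constant to $\tfrac{4}{3}$ before taking expectations. The careful constant-tracking you defer is exactly the computation $\int_0^1 t\bigl((1-t)p(\|x\|_2)+tp(\|y\|_2)+p(\|x\|_2)\bigr)\,dt \le \tfrac{2}{3}\bigl(p(\|x\|_2)+p(\|y\|_2)\bigr)$ carried out in the paper, so no gap remains.
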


\begin{proposition}[Satisfying~\ref{assum:lip}]\label{prop:prox_linear_lipschitz}
Suppose there are square integrable functions $L_1, L_2 \colon \Omega \rightarrow \RR_+$ and a univariate polynomial $q(u)=\sum_{i=0}^n b_i u^i$ with nonnegative coefficients satisfying
\begin{align*}
\frac{|h(v,\xi) - h(w,\xi)|}{\|v - w\|_2} &\leq L_1(\xi) \qquad \forall v\neq w,\\
   \|\nabla c(x, \xi)\|_{\rm op} &\leq L_2(\xi) \cdot \sqrt{q(\|x\|_2)}\qquad \forall x,\xi.
\end{align*}
Then with the Legendre function $\Phi(x) = \sum_{i=0}^n \frac{b_i}{i+2}\|x\|_2^{i+2}$, assumption~\ref{assum:lip} holds with $L(\xi) =\sqrt{2} L_1(\xi)L_2(\xi)$.
\end{proposition}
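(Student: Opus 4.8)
The plan is to reduce the first inequality in Assumption~\ref{assum:lip}, namely \eqref{eqn:lip_funcmodel}, to a purely scalar estimate comparing the Bregman divergence $D_\Phi$ against the polynomial $q$, and then to verify that estimate one monomial at a time.

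First I would unpack the left-hand side of \eqref{eqn:lip_funcmodel}. Since $f_x(x,\xi)=h(c(x,\xi),\xi)$ while $f_x(y,\xi)=h\bigl(c(x,\xi)+\nabla c(x,\xi)(y-x),\xi\bigr)$, the Lipschitz continuity of $h(\cdot,\xi)$ together with the operator-norm bound on $\nabla c(x,\xi)$ gives
\[
f_x(x,\xi)-f_x(y,\xi)\ \le\ L_1(\xi)\bigl\|\nabla c(x,\xi)(y-x)\bigr\|_2\ \le\ L_1(\xi)L_2(\xi)\sqrt{q(\|x\|_2)}\,\|x-y\|_2 .
\]
Hence \eqref{eqn:lip_funcmodel} with $L(\xi):=\sqrt2\,L_1(\xi)L_2(\xi)$ follows once I establish the deterministic bound
\[
q(\|x\|_2)\,\|x-y\|_2^2\ \le\ 2\,D_\Phi(y,x)\qquad\text{for all }x,y\in\Rd ,
\]
while the moment bound $\sqrt{\EE_\xi[L(\xi)^2]}\le\lipsymb$ is then inherited from the integrability hypotheses on $L_1,L_2$. (That $\Phi$ is Legendre is clear: it is a finite, smooth, strictly convex function on all of $\Rd$.)

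For the displayed deterministic bound I would exploit additivity of the Bregman divergence. Writing $\phi_i(x)=\tfrac{b_i}{i+2}\|x\|_2^{i+2}$ we have $D_\Phi=\sum_{i=0}^n D_{\phi_i}$ and $q(\|x\|_2)\|x-y\|_2^2=\sum_{i=0}^n b_i\|x\|_2^{i}\|x-y\|_2^2$, so it suffices to show $D_{\phi_i}(y,x)\ge\tfrac{b_i}{2}\|x\|_2^{i}\|x-y\|_2^2$ for each $i$. Using $\nabla\phi_i(x)=b_i\|x\|_2^{i}x$ and expanding $D_{\phi_i}(y,x)$ in terms of $\|x\|_2$, $\|y\|_2$ and $\langle x,y\rangle$, the inner-product terms cancel and the claim becomes equivalent to
\[
\|y\|_2^{i+2}+\tfrac{i}{2}\|x\|_2^{i+2}\ \ge\ \tfrac{i+2}{2}\,\|x\|_2^{i}\,\|y\|_2^{2},
\]
which is exactly the weighted arithmetic--geometric mean inequality $u^p+(p-1)v^p\ge p\,u\,v^{p-1}$ (valid for $p\ge1$ and $u,v\ge0$) applied with $p=\tfrac{i+2}{2}$, $u=\|y\|_2^2$, $v=\|x\|_2^2$. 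Summing over $i=0,\dots,n$ completes the argument; this is essentially the $\tau$-free analogue of Proposition~\ref{prop:bregpoly}, so one could equally invoke a mildly adjusted form of that statement.

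The only step that is not bookkeeping is this monomial inequality, and the point that deserves care there is that a natural shortcut fails: bounding the Hessian $\nabla^2\phi_i(x)=b_i\|x\|_2^{i}\bigl(I+i\,xx^{\top}/\|x\|_2^2\bigr)\succeq b_i\|x\|_2^{i}I$ and integrating along the segment from $x$ to $y$ does \emph{not} suffice, since $\int_0^1(1-t)\|x+t(y-x)\|_2^{i}\,dt$ can be strictly smaller than $\tfrac12\|x\|_2^{i}$ (take $y=-x$). One therefore has to use the exact expansion of $D_{\phi_i}$ and reduce to AM--GM rather than a crude Hessian lower bound; everything else---the Lipschitz estimate above, additivity of $D_\Phi$, and the passage from $L_1,L_2$ to $L$---is routine.
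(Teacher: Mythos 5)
Your proof is correct and follows essentially the same route as the paper: bound $f_x(x,\xi)-f_x(y,\xi)$ by $L_1(\xi)L_2(\xi)\sqrt{q(\|x\|_2)}\,\|x-y\|_2$ via the Lipschitz and operator-norm hypotheses, then reduce to the estimate $q(\|x\|_2)\,\|x-y\|_2^2\le 2\,D_\Phi(y,x)$. The only difference is that the paper invokes this last estimate from \cite[Equation~(25)]{Lu_mirror_weird} (the same inequality underlying Proposition~\ref{prop:bregpoly}), whereas you verify it monomial-by-monomial via weighted AM--GM, which is a correct, self-contained substitute.
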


To construct a Bregman function compatible with both \ref{assum:one_sided} and~\ref{assum:lip} simultaneously, one may simply add the two Legendre functions constructed in Propositons~\ref{prop:prox_linear_accuracy} and~\ref{prop:prox_linear_lipschitz}.

\paragraph{Stochastic saddle point problems.} 

As the final example, suppose that $f$ is given in the stochastic conjugate form
$$
f(x) = \EE\left[ \sup_{w \in W} g(x, w, \xi) \right],
$$
where $W$ is some auxiliary set and $g \colon \RR^d \times W \times \Omega \rightarrow \RR$ is some function. Thus we are interested in solving the stochastic saddle-point problem 
\begin{equation}\label{eqn:stoch_saddle}
\inf_{x}~\EE\left[ \sup_{w \in W} g(x, w, \xi) \right]+r(x).
\end{equation}
Such problems appear often in data science, where the variation of $w$ in the ``uncertainty set'' $W$  makes the loss function robust. One popular example is adversarial training \cite{adverse_training}. In this setting, we have $g(x,w,\xi)=\mathcal{L}(x+w,y,\xi)$, where $\mathcal{L}(\cdot,\cdot)$ is a loss function, $y$ encodes the observed data, and $w$ varies over some uncertainty set $W$, such as an $\ell_p$-ball. 

In order to apply our algorithmic framework, we must have access to stochastic one-sided models $f_{x}(\cdot,\xi)$ of $f$. It is quite natural to construct such models by using one-sided  stochastic models  $g_x(\cdot,w,\xi)$ of $g$. Indeed, it is appealing to simply set 
\begin{equation}\label{eqn:robustiviszing}
f_{x}(y, \xi) = g_{x}(y, \widehat w(x, \xi), \xi) \qquad \textrm{for any }\qquad \widehat{w}(x,\xi)\in\argmax_w~ g_{x}(x,w,\xi). 
\end{equation}
All of the model types in the previous examples could now serve as the models $g_{x}(\cdot, w, \xi)$, provided they meet the conditions outlined below.

Formally, to ensure that \ref{assum:sampling}-(A5) hold for the models $f_{x}(y, \xi)$, we must make the following assumptions:
\begin{enumerate}
\item The mapping $(x, \xi) \rightarrow \sup_{w\in W} g(x, w, \xi)$ is measurable and has finite first moment for every fixed $x \in U\cap\dom r$.
\item \label{item:saddle_weak_convex} The function $g_{x}(\cdot, w, \xi)$ is $\rho$-weakly convex  relative to $\Phi$, for every fixed $x \in U\cap\dom r$, $w \in W$, and a.e. $\xi \in \Omega$.
\item\label{item:saddle_maximizer} There exists a  mapping $\widehat w\colon U\times \Omega \rightarrow \RR^m$ satisfying $$\widehat w(x, \xi) \in \argmax_w g_x(x, w,\xi),$$ for all $x \in U\cap\dom r$ and a.e. $\xi \in \Omega$ with the property that the functions $(x, y, \xi) \mapsto g_{x}(y, \widehat w(x, \xi), \xi)$ and $(x, y, \xi) \mapsto g(y, \widehat w(x, \xi), \xi)$ are measurable.
\item \label{item:saddle_equals} For all $x,y \in U\cap\dom r$, we have $$\mathbb{E}_{\xi}\left[g_x(x, \widehat w(x,\xi), \xi)\right] = \mathbb{E}_{\xi}\left[g(x,\widehat w(x,\xi),\xi)\right]$$
and $$\EE\left[ g_{x}(y, \widehat w(x, \xi) , \xi) -  g(y, \widehat w(x, \xi) , \xi)\right] \leq \tau D_\Phi(y, x).$$
\item \label{item:saddle_lipschitz} There exists a square integrable function $L\colon\Omega\to\R_+$ such that $$g_{x}(x, \widehat w(x, \xi) , \xi) - g_{x}(y, \widehat w(x, \xi) , \xi)  \leq L(\xi) \sqrt{D_{\Phi}(y, x)}, \qquad \textrm{for all } x,y \in U\cap\dom r.$$ 
\end{enumerate}
Given these assumptions, let us define $f_{x}(y, \xi)$ as in \eqref{eqn:robustiviszing}
We now verify properties~\ref{assum:one_sided}-\ref{assum:lip}. Property~\ref{assum:one_sided} follows from Property~\ref{item:saddle_equals}, which implies that
$
\EE\left[ f_{x}(x, \xi) \right] = f(x)
$
and 
\begin{align*}
\EE_{\xi}\left[f_{x}(y, \xi) - f(y) \right]&= \EE_{\xi}\left[g_{x}(y, \widehat w(x, \xi), \xi) - \sup_{w\in W} g(y, w, \xi) \right]\\
&\leq \EE_{\xi}\left[g_{x}(y, \widehat w(x, \xi), \xi) -  g(y, \widehat w(x, \xi), \xi) \right]\\
&\leq \tau D_\Phi(y, x).
\end{align*}
Property~\ref{it:convex_model} follows directly from Property~\ref{item:saddle_weak_convex}. Finally, \ref{assum:lip} follows from Property~\ref{item:saddle_lipschitz}.

\section{Stationarity measure}\label{sec:conv_guarant}
In this section, we introduce a natural stationarity measure that we will use to describe the convergence rate of Algorithm~\ref{alg:stoc_prox}. The stationarity measure is simply the size of the gradient of an appropriate smooth approximation of the problem \eqref{eq:problem}. This idea is completely analogous to the Euclidean setting \cite{stochastic_subgrad,subgrad_pnas}.
Setting the stage, for any $\lambda >0$, define the {\em $\Phi$-envelope}
\[ F^\Phi_\lambda (x) := \inf_{y} \left\{ f (y) + \frac{1}{\lambda}D_\Phi(y,x) \right\}, \]
and the associated {\em $\Phi$-proximal map}
\[ \prox_{\lambda f}^\Phi (x) :=   \argmin_y \left\{ F(y) + \frac{1}{\lambda} D_\Phi(y,x) \right\} . \]
Note that in the Euclidean setting $\Phi=\frac{1}{2}\|\cdot\|^2$, these two constructions reduce to the standard Moreau envelope and the proximity map; see for example the monographs \cite{rock_wets,Parikh_survey} or the note \cite{dima_news} for recent perspectives. 

We will measure the convergence guarantees of Algorithm~\ref{alg:stoc_prox} based on the rate at which the quantity
\begin{equation}\label{eqn:stat_quant}
\EE[D_{\Phi}\left(\prox_{\lambda F}^\Phi (x_{t^*}),x_{t^*}\right)]
\end{equation}
tends to zero for some fixed $\lambda>0$. The significance of this quantity becomes apparent after making slightly stronger assumptions on the Legendre function $\Phi$. In this section only, suppose that $\Phi\colon\R^d\to\R\cup\{+\infty\}$ is 1-strongly convex with respect to some norm $\|\cdot\|$ and that $\Phi$ is twice differentiable at every point in $\inter(\dom \Phi)$. With these assumptions, the following result shows that the $\Phi$-envelope is differentiable, with a meaningful gradient. Indeed, this result follows quickly from  \cite{breg_baus}. For the sake of completeness, we present a self-contained argument in Appendix~\ref{sec:app_proof_diff_breg}.

\begin{theorem}[Smoothness of the $\Phi$-envelope] \label{thm:env_diff}
For any positive $\lambda< (\tau+\rho)^{-1}$, the envelope $F_\lambda^\Phi$ is differentiable at any point $x\in\inter(\dom \Phi)$ with gradient given by
	\[ \nabla F^\Phi_\lambda (x) := \frac{1}{\lambda} \nabla^2 \Phi (x) \left(x - \prox_{\lambda F}^\Phi (x)\right). \]
\end{theorem}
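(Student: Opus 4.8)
The plan is to reduce the statement to the smoothness of a Fenchel conjugate, followed by a chain‑rule computation. Fix $\lambda<(\tau+\rho)^{-1}$ and set $\mu:=1-\lambda(\tau+\rho)\in(0,1)$ together with $h:=\lambda F+\Phi$. By Lemma~\ref{lem:weak_conv} the function $F+(\tau+\rho)\Phi$ is convex, so $h=\lambda\bigl(F+(\tau+\rho)\Phi\bigr)+\mu\Phi$ is proper, closed, and --- since in this section $\Phi$ is $1$-strongly convex relative to the norm $\|\cdot\|$ --- it is $\mu$-strongly convex relative to $\|\cdot\|$. Consequently, for each $x\in\inter(\dom\Phi)$ the function $y\mapsto h(y)-\la\nabla\Phi(x),y\ra$ is strongly convex and hence has a unique minimizer; since it differs from $F(\cdot)+\tfrac1\lambda D_\Phi(\cdot,x)$ only by an additive constant, that minimizer is exactly $\prox_{\lambda F}^{\Phi}(x)$ (this is also why Assumption (A5) is automatic in the present setting).

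\textbf{Step 1 (conjugacy identity).} Expanding $D_\Phi(y,x)$ inside the infimum defining $F_\lambda^\Phi$ and recognizing $\inf_y\{h(y)-\la\nabla\Phi(x),y\ra\}=-h^*(\nabla\Phi(x))$ yields
\[
F_\lambda^\Phi(x)\;=\;\frac1\lambda\Bigl(\la\nabla\Phi(x),x\ra-\Phi(x)-h^*\bigl(\nabla\Phi(x)\bigr)\Bigr),\qquad x\in\inter(\dom\Phi).
\]
\textbf{Step 2 (smoothness of $h^*$).} Since $h$ is proper, closed, and $\mu$-strongly convex relative to a norm, its conjugate $h^*$ is finite on all of $\R^d$ and differentiable, with $\nabla h^*(v)$ equal to the unique minimizer of $y\mapsto h(y)-\la v,y\ra$. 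Combined with the preceding paragraph, $\nabla h^*(\nabla\Phi(x))=\prox_{\lambda F}^{\Phi}(x)$ for all $x\in\inter(\dom\Phi)$.

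\textbf{Step 3 (chain rule).} Fix $x_0\in\inter(\dom\Phi)$. Because $\Phi$ is twice differentiable at $x_0$, the map $x\mapsto\nabla\Phi(x)$ is Fr\'echet differentiable at $x_0$ with (symmetric) derivative $\nabla^2\Phi(x_0)$; the map $x\mapsto\Phi(x)$ is differentiable at $x_0$ with gradient $\nabla\Phi(x_0)$; and $x\mapsto\la\nabla\Phi(x),x\ra$ is differentiable at $x_0$ with gradient $\nabla^2\Phi(x_0)x_0+\nabla\Phi(x_0)$. Applying the composition rule to $x\mapsto h^*(\nabla\Phi(x))$ --- valid since $h^*$ is differentiable at $\nabla\Phi(x_0)$ and $\nabla\Phi$ is differentiable at $x_0$ --- gives gradient $\nabla^2\Phi(x_0)\prox_{\lambda F}^{\Phi}(x_0)$. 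Substituting into the identity of Step 1 and cancelling the two $\nabla\Phi(x_0)$ terms produces $\nabla F_\lambda^\Phi(x_0)=\tfrac1\lambda\nabla^2\Phi(x_0)\bigl(x_0-\prox_{\lambda F}^{\Phi}(x_0)\bigr)$, which is the claimed formula.

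The step needing the most care is Step 2: one must ensure $h^*$ is genuinely Fr\'echet differentiable --- not merely that the claimed vector is a one‑sided or limiting (sub/super)gradient of $F_\lambda^\Phi$ --- and that its gradient is the proximal map. This is precisely where $1$-strong convexity of $\Phi$ enters, since it upgrades the bare convexity of $\lambda F+\Phi$ to strong convexity and hence makes $h^*$ globally $C^1$; without it one only obtains differentiability of $h^*$ on a dense set and must argue more delicately, as in \cite{breg_baus}. A self‑contained alternative avoiding conjugates is a sandwich argument: with $\hat x_0:=\prox_{\lambda F}^{\Phi}(x_0)$, the function $x\mapsto F(\hat x_0)+\tfrac1\lambda D_\Phi(\hat x_0,x)$ majorizes $F_\lambda^\Phi$ with equality at $x_0$ and is differentiable at $x_0$, which already yields the Fr\'echet upper estimate with the correct gradient; evaluating $F_\lambda^\Phi(x)=F(\hat x)+\tfrac1\lambda D_\Phi(\hat x,x)$ with $\hat x:=\prox_{\lambda F}^{\Phi}(x)$ against $F_\lambda^\Phi(x_0)\le F(\hat x)+\tfrac1\lambda D_\Phi(\hat x,x_0)$ furnishes the matching lower estimate, once one knows $x\mapsto\prox_{\lambda F}^{\Phi}(x)$ is continuous at $x_0$ --- which again follows from $\mu$-strong convexity of $h$ and continuity of $\nabla\Phi$.
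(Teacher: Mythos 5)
Your proof is correct and follows essentially the same route as the paper's: both rewrite $F_\lambda^\Phi$ via the Fenchel conjugate of $\lambda F+\Phi$ (equivalently $F+\tfrac1\lambda\Phi$), use the relative weak convexity of $F$ together with the $1$-strong convexity of $\Phi$ to get differentiability of that conjugate with $\nabla h^*(\nabla\Phi(x))=\prox_{\lambda F}^\Phi(x)$, and then apply the chain and sum rules. The additional sandwich argument you sketch is a fine alternative but is not needed; your main argument already matches the paper's.
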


\noindent
In light of \Cref{thm:env_diff}, for any point $x\in \inter(\dom \Phi)$, we may define the local norm 
$$\|y\|_x:=\left\|\nabla^2 \Phi(x)y\right\|_*.$$
Then a quick computation shows that the dual norm is given by 
$$\|v\|_x^*=\left\|\nabla^2 \Phi(x)^{-1}v\right\|.$$
Therefore appealing to Theorem~\ref{thm:env_diff}, for any positive $\lambda< (\tau+\rho)^{-1}$ and $x\in \inter(\dom \Phi)$ we obtain the estimate
$$\sqrt{D_{\Phi}\left(\prox_{\lambda F}^\Phi (x),x\right)}\geq \frac{\lambda}{\sqrt{2}}\|\nabla F^\Phi_\lambda (x)\|^*_x.$$
Thus the square root of the Bregman divergence, which we will show tends to zero along the iterate sequence at a controlled rate,  bounds the local norm of the  gradient $\nabla F^\Phi_\lambda$.

\section{Convergence analysis}\label{sec:conv_anal}
We now present convergence analysis of Algorithm~\ref{alg:stoc_prox} under Assumptions \ref{assum:sampling}-(A5).
Henceforth, let $\{x_t\}_{t\geq 0}$ be the iterates generated by Algorithm~\ref{alg:stoc_prox} and let $\{\xi_t\}_{t\geq 0}$ be the corresponding samples used. For each index $t\geq 0$, define the Bregman-proximal point 
$$\hat x_t=\prox_{\lambda F}^\Phi (x_t).
$$ 
To simplify notation, we will use the symbol $\mathbb{E}_{t}[\cdot]$ to denote the expectation conditioned on all the realizations $\xi_0,\xi_1,\ldots, \xi_{t-1}$.
The entire argument of Theorem~\ref{thm:convergence}---our main result---relies on the following lemma.

\begin{lemma} \label{lem:3pt}
For each iteration $t\geq 0$, the iterates of Algorithm~\ref{alg:stoc_prox} satisfy
\[  
 \Et{D_\Phi (\hat x_t, x_{t+1})} \le
\tfrac{1 + \eta_t \tau - \eta_t/\lambda}{1-\eta_t\rho} D_\Phi (\hat x_t, x_t)  +\tfrac{ (\lipsymb \eta_t)^2}{4(1-\eta_t\rho)} + \tfrac{\eta_t}{1-\eta_t\rho}\Et{r(x_{t}) - r(x_{t+1})}. 
\]
\end{lemma}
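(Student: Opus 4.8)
The plan is to apply the three-point inequality (Lemma~\ref{lem:threepoint}) to the subproblem defining $x_{t+1}$, then combine it with the one-sided accuracy assumption~\ref{assum:one_sided} and the Lipschitz property~\ref{assum:lip}, and finally take conditional expectations. Write $\eta=\eta_t$, $\xi=\xi_t$, and recall $x_{t+1}=\argmin_x\{f_{x_t}(x,\xi)+r(x)+\tfrac{1}{\eta}D_\Phi(x,x_t)\}$. By assumption~\ref{it:convex_model}, the function $g(x):=\eta\big(f_{x_t}(x,\xi)+r(x)\big)$ is $\eta\rho$-weakly convex relative to $\Phi$, so $g+D_\Phi(\cdot,x_t)=g+\eta\rho\Phi+(1-\eta\rho)(\tfrac{1}{1-\eta\rho}(D_\Phi(\cdot,x_t)-\eta\rho\Phi)\cdot\ldots)$ — more cleanly, I would factor out $(1-\eta\rho)$ and apply Lemma~\ref{lem:threepoint} with the convex function $\widetilde g:=\tfrac{1}{1-\eta\rho}\big(g+\eta\rho\Phi-\text{(linear part)}\big)$ and the anchor $x_t$; the upshot is the ``weakly-convex three-point inequality'': for all $x\in\dom\Phi$,
\[
f_{x_t}(x_{t+1},\xi)+r(x_{t+1})+\tfrac{1}{\eta}D_\Phi(x_{t+1},x_t)+\tfrac{1-\eta\rho}{\eta}D_\Phi(x,x_{t+1})\le f_{x_t}(x,\xi)+r(x)+\tfrac{1}{\eta}D_\Phi(x,x_t).
\]

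Next I would plug in $x=\hat x_t=\prox^\Phi_{\lambda F}(x_t)$. Rearranging gives an upper bound on $\tfrac{1-\eta\rho}{\eta}D_\Phi(\hat x_t,x_{t+1})$ in terms of $f_{x_t}(\hat x_t,\xi)-f_{x_t}(x_{t+1},\xi)$, the term $r(\hat x_t)-r(x_{t+1})$, and $\tfrac{1}{\eta}\big(D_\Phi(\hat x_t,x_t)-D_\Phi(x_{t+1},x_t)\big)$. For the model-difference term I would split it as $\big(f_{x_t}(\hat x_t,\xi)-f(\hat x_t)\big)+\big(f(\hat x_t)-f(x_t)\big)+\big(f(x_t)-f_{x_t}(x_{t+1},\xi)\big)$; note $f(x_t)=f_{x_t}(x_t,\xi)$ in expectation, and $f_{x_t}(x_t,\xi)-f_{x_t}(x_{t+1},\xi)\le L(\xi)\sqrt{D_\Phi(x_{t+1},x_t)}$ by~\ref{assum:lip}. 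The $f(\hat x_t)-f(x_t)$ piece combines with $r(\hat x_t)-r(x_{t+1})+\tfrac1\eta(D_\Phi(\hat x_t,x_t)-D_\Phi(x_{t+1},x_t))$, and I would handle $F(\hat x_t)-F(x_t)\le\ldots$ using the defining optimality of $\hat x_t$ for $\min_y\{F(y)+\tfrac1\lambda D_\Phi(y,x_t)\}$: namely $F(\hat x_t)+\tfrac1\lambda D_\Phi(\hat x_t,x_t)\le F(x_t)$, equivalently $F(\hat x_t)-F(x_t)\le-\tfrac1\lambda D_\Phi(\hat x_t,x_t)$. Writing $r(\hat x_t)-r(x_t)=\big(F(\hat x_t)-f(\hat x_t)\big)-\big(F(x_t)-f(x_t)\big)$ lets me turn the $r$-difference at $\hat x_t$ into an $F$-difference plus an $f$-difference, so the only surviving $r$-term is $r(x_t)-r(x_{t+1})$, as in the claimed bound.

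After these substitutions I expect the bound to read, deterministically,
\[
\tfrac{1-\eta\rho}{\eta}D_\Phi(\hat x_t,x_{t+1})\le\big(\tfrac1\eta+\tau-\tfrac1\lambda\big)D_\Phi(\hat x_t,x_t)-\tfrac1\eta D_\Phi(x_{t+1},x_t)+L(\xi)\sqrt{D_\Phi(x_{t+1},x_t)}+\tfrac1\eta\big(\text{terms}\big)+r(x_t)-r(x_{t+1}),
\]
modulo the terms whose expectation vanishes by~\ref{assum:one_sided}. The key trick is then to complete the square: $-\tfrac1\eta D_\Phi(x_{t+1},x_t)+L(\xi)\sqrt{D_\Phi(x_{t+1},x_t)}\le\tfrac{\eta L(\xi)^2}{4}$, which kills the unwanted $D_\Phi(x_{t+1},x_t)$ term and leaves exactly the $\tfrac{(\lipsymb\eta)^2}{4(1-\eta\rho)}$ contribution after taking expectations and using $\EE_\xi[L(\xi)^2]\le\lipsymb^2$. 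Finally, dividing through by $\tfrac{1-\eta\rho}{\eta}$ and taking $\Et{\cdot}$ (using that $x_t$ is $\mathcal F_t$-measurable, so the $\xi$-expectations act on the model terms as in~\ref{assum:one_sided}) produces precisely the stated inequality.

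The main obstacle is bookkeeping: carefully tracking which terms are deterministic versus mean-zero under $\Et{\cdot}$, and correctly folding the $r(\hat x_t)$ and $f(\hat x_t)$ terms into the $F$-envelope inequality so that only $r(x_t)-r(x_{t+1})$ remains on the right-hand side. A secondary technical point is justifying the weakly-convex version of the three-point inequality — this is where assumption~\ref{it:convex_model} and the restriction $\eta<\lambda<(\tau+\rho)^{-1}$ (ensuring $1-\eta\rho>0$ and $\tfrac1\eta+\tau-\tfrac1\lambda\ge0$ is not needed here but $1-\eta\rho>0$ is essential for dividing) enter, together with assumption~(A5) guaranteeing the minimizers exist and lie in $U$.
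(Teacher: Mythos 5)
Your proposal is correct and follows essentially the same route as the paper's proof: the three-point inequality of Lemma~\ref{lem:threepoint} applied with the convex function $f_{x_t}(\cdot,\xi_t)+r(\cdot)+\rho D_\Phi(\cdot,x_t)$ and divergence $(\tfrac{1}{\eta_t}-\rho)D_\Phi(\cdot,x_t)$, followed by plugging in $x=\hat x_t$, the same three-way split of the model difference handled via \ref{assum:one_sided}, \ref{assum:lip} and the defining inequality $F(\hat x_t)+\tfrac{1}{\lambda}D_\Phi(\hat x_t,x_t)\le F(x_t)$, leaving only $r(x_t)-r(x_{t+1})$, and then dividing by $1-\eta_t\rho$. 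The only cosmetic difference is that you complete the square pointwise in $L(\xi)$ before taking expectations, whereas the paper first applies Cauchy--Schwarz and maximizes over $\gamma=\sqrt{\Et{D_\Phi(x_{t+1},x_t)}}$; both yield the identical term $\tfrac{(\lipsymb\eta_t)^2}{4}$.
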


\begin{proof}
Taking into account assumption~\ref{it:convex_model}, we may apply the three point inequality in \Cref{lem:threepoint} with the convex function $g=f_{x_t}(\cdot,\xi_t)+r(\cdot)+\rho D_{\Phi}(\cdot,x_t)$ and with $(\tfrac{1}{\eta_t}-\rho )D_{\Phi}(\cdot,x_t)$ replacing the Bregman divergence. Thus for any point $x\in \inter(\dom \Phi)$, we obtain the estimate
\begin{equation} \label{eq:3pt}
 f_{x_t}(x, \xi_t) + r(x) + \frac{1}{\eta_t} D_\Phi (x, x_t) 
 \ge f_{x_t}(x_{t+1}, \xi_t)  + r(x_{t+1}) + \frac{1}{\eta_t} D_\Phi (x_{t+1}, x_t) + \left( \frac{1}{\eta_t} - \rho\right) D_\Phi (x, x_{t+1}) .
\end{equation}
Setting $x = \hat x_t$, rearranging terms, and taking expectations, we deduce
\begin{equation} \label{eq:init_bound}
 \begin{aligned}
 \mathbb{E}_{\xi}[f_{x_t}(\hat x_t, \xi_t) + &r(\hat x_t)-  f_{x_t}(x_{t+1}, \xi_t) - r(x_{t+1}) ] \\
 &\geq  \frac{1}{\eta_t} \Et{ (1-\eta_t\rho) D_\Phi (\hat x_t, x_{t+1}) - D_\Phi (\hat x_t, x_{t}) + D_\Phi (x_{t+1}, x_{t}) }.
 \end{aligned}
 \end{equation}
We seek to upper bound the left-hand-side of \eqref{eq:init_bound}. Using assumptions \ref{assum:one_sided} and \ref{assum:lip}, we obtain:
\begin{equation}\label{eqn:weird_multi_later_used}
\begin{aligned}
& \Et{  f_{x_t} (\hat x_t , \xi_t) - f_{x_t}( x_{t+1}, \xi_t)  }\\
 &\hspace{20pt}\le \Et{  f_{x_t} (\hat x_t , \xi_t) - f_{x_t}( x_{t}, \xi_t)  + L(\xi) \sqrt{ D_{\Phi}(x_{t+1},x_t)} } \\
 &\hspace{20pt}= \Et{  f_{x_t} (\hat x_t , \xi_t) - f(\hat x_t)}   + \Et { L(\xi)\sqrt{ D_{\Phi}(x_{t+1},x_t)} } - f(x_t) + f(\hat x_t) \\
 &\hspace{20pt}\le \tau D_\Phi (\hat x_t, x_t) + \Et { L(\xi)\sqrt{ D_{\Phi}(x_{t+1},x_t)} } - f(x_t) + f(\hat x_t).
\end{aligned}
\end{equation}
By the definition of $\hat x_t$ as the Bregman-proximal point, we have
\begin{equation}\label{eqn:prox_point_defn_proof}
 f(\hat x_t) + r(\hat x_t) + \frac{1}{\lambda} D_\Phi(\hat x_t, x_t) \le f(x_{t}) + r(x_{t}) . 
\end{equation}
The right hand side of \eqref{eq:init_bound} is thus upper bounded by
\begin{align*}
& \tau D_\Phi (\hat x_t, x_t) + \Et {  L(\xi) \sqrt{ D_{\Phi}(x_{t+1},x_t)}  - f(x_t) - r(x_{t+1} )}+ f(\hat x_t) + r(\hat x_t) \\
 &\le \tau D_\Phi (\hat x_t, x_t) + \Et {  L(\xi) \sqrt{ D_{\Phi}(x_{t+1},x_t)} + (r(x_{t}) - r(x_{t+1})) } + f(\hat x_t) + r(\hat x_t) - f(x_{t}) - r(x_{t}) \\
 &\le \left(\tau - \frac{1}{\lambda}\right)D_\Phi (\hat x_t, x_t) + \Et {  L(\xi) \sqrt{ D_{\Phi}(x_{t+1},x_t)}   +  (r(x_{t}) - r(x_{t+1})) }
\end{align*}
where the last inequality follows from \eqref{eqn:prox_point_defn_proof}.
Combining this estimate with \eqref{eq:init_bound}, we obtain
\begin{align*}
&\frac{1}{\eta_t} \Et{ (1-\eta_t\rho) D_\Phi (\hat x_t, x_{t+1}) - D_\Phi (\hat x_t, x_{t}) + D_\Phi (x_{t+1}, x_{t}) }\\
 &\hspace{20pt}\le
 \left( \tau - \frac{1}{\lambda} \right) D_\Phi (\hat x_t, x_t)  +  \Et {  L(\xi) \sqrt{ D_{\Phi}(x_{t+1},x_t)}+(r(x_{t}) - r(x_{t+1})) }  .
 \end{align*}
  Multiplying through by $\eta_t$ and rearranging yields
 \begin{equation}\label{eqn:rand_ass_eqn}
 \begin{aligned}
 &(1-\eta_t\rho) \Et{ D_\Phi (\hat x_t, x_{t+1}) } \\
 &\hspace{20pt}\le
\left( 1 +  \eta_t \tau - \frac{\eta_t}{\lambda} \right) D_\Phi (\hat x_t, x_t)  +   \Et { \eta_t L(\xi) \sqrt{ D_{\Phi}(x_{t+1},x_t)}  -  { D_\Phi (x_{t+1}, x_{t}) }}\\
&\hspace{40pt} + \eta_t\Et{r(x_{t}) - r(x_{t+1})}.
 \end{aligned}
 \end{equation}
Now define $\gamma := \sqrt{\Et {   D_{\Phi}(x_{t+1},x_t)}}$. 
Note that Cauchy-Schwarz implies $$\Et { \eta_t L(\xi) \sqrt{ D_{\Phi}(x_{t+1},x_t)}} \leq \eta_t \lipsymb\gamma.$$ Using this estimate in \eqref{eqn:rand_ass_eqn}, we obtain
\begin{align*}
 (1-\eta_t\rho) \Et{ D_\Phi (\hat x_t, x_{t+1}) }
 &\le
 \left( 1 + \eta_t \tau - \frac{\eta_t}{\lambda} \right) D_\Phi (\hat x_t, x_t)  +   \eta_t\lipsymb \gamma -   \gamma^2 \\
 &\hspace{20pt} +  \eta_t\Et{r(x_{t}) - r(x_{t+1})}.
\end{align*}
 Maximizing the right hand side in $\gamma$ (i.e. taking $\gamma = \frac{\lipsymb\eta_t}{2}$), yields the guarantee
  \[ 
(1-\eta_t\rho) \Et{ D_\Phi (\hat x_t, x_{t+1}) }
 \le
 \left( 1 + \eta_t \tau - \frac{\eta_t}{\lambda} \right) D_\Phi (\hat x_t, x_t)  +\frac{(\lipsymb\eta_t)^2}{4} + \eta_t\Et{r(x_{t}) - r(x_{t+1})} .
 \]
Dividing through by $1-\eta_t\rho$ completes the proof.
\end{proof}

\noindent
We can now prove our main theorem.

\begin{theorem}[Convergence rate] \label{thm:convergence}
The point $x_{t^*}$ returned by Algorithm~\ref{alg:stoc_prox} satisfies:
\begin{align*}
&\EE\left[D_{\Phi}\left(\prox_{\lambda F}^\Phi (x_{t^*}),x_{t^*}\right)\right]\\
&\hspace{20pt}\le \frac{\lambda^2}{1 - \lambda (\tau + \rho)} \left( 
\frac{F_\lambda^\Phi(x_0) - \min F}{\sum_{t=0}^{T} \frac{\eta_t}{1-\eta_t \rho}} + \frac{ \lipsymb^2\sum_{t=0}^{T} \frac{ \eta_t^2 }{4\lambda(1 -\eta_t \rho)} }{\sum_{t=0}^{T} \frac{\eta_t}{1-\eta_t \rho}} +  \frac{\tfrac{\eta_0}{\lambda(1-\eta_0\rho)} (r(x_0)-\inf r) }{\sum_{t=0}^{T} \frac{\eta_t}{1-\eta_t \rho}} \right).
\end{align*}

\end{theorem}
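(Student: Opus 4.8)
The plan is to turn the one-step bound of Lemma~\ref{lem:3pt} into a telescoping recursion for the $\Phi$-envelope evaluated along the iterates, and then average over the index $t^*$. The link between the two is the elementary observation that $\hat x_t$ is feasible for the minimization problem defining $F_\lambda^\Phi(x_{t+1})$, so
\[
F_\lambda^\Phi(x_{t+1}) \le F(\hat x_t) + \tfrac{1}{\lambda}D_\Phi(\hat x_t, x_{t+1}),
\]
while by the very definition of $\hat x_t$ we have $F(\hat x_t) = F_\lambda^\Phi(x_t) - \tfrac{1}{\lambda}D_\Phi(\hat x_t, x_t)$. Taking $\Et{\cdot}$ and substituting Lemma~\ref{lem:3pt}, the two occurrences of $D_\Phi(\hat x_t, x_t)$ combine with coefficient $-\tfrac{\eta_t}{\lambda(1-\eta_t\rho)}\bigl(\tfrac1\lambda - \tau - \rho\bigr)$, which is strictly negative since $\lambda < (\tau+\rho)^{-1}$. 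This yields the recursion
\begin{align*}
\Et{F_\lambda^\Phi(x_{t+1})} &\le F_\lambda^\Phi(x_t) - \tfrac{\eta_t(1/\lambda - \tau - \rho)}{\lambda(1-\eta_t\rho)}D_\Phi(\hat x_t, x_t) \\
&\qquad + \tfrac{(\lipsymb\eta_t)^2}{4\lambda(1-\eta_t\rho)} + \tfrac{\eta_t}{\lambda(1-\eta_t\rho)}\Et{r(x_t) - r(x_{t+1})}.
\end{align*}

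Next I would take full expectations, move the $D_\Phi(\hat x_t, x_t)$ term to the left, and sum over $t = 0, \dots, T$. The envelope terms telescope, and since $F_\lambda^\Phi(y) \ge \min F$ for every $y$, the surviving boundary term $-\E{F_\lambda^\Phi(x_{T+1})}$ is at most $-\min F$; the noise terms simply accumulate into $\tfrac{\lipsymb^2}{4\lambda}\sum_t \tfrac{\eta_t^2}{1-\eta_t\rho}$. The only term that needs care is $\sum_{t=0}^T \alpha_t\,\E{r(x_t)-r(x_{t+1})}$ with $\alpha_t := \tfrac{\eta_t}{\lambda(1-\eta_t\rho)}$: because the weights $\alpha_t$ are not constant this does not telescope, so I would apply summation by parts. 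Since $u \mapsto u/(1-u\rho)$ is nondecreasing on $(0,\rho^{-1})$ and $\{\eta_t\}$ is nonincreasing with $\eta_t < \lambda \le (\tau+\rho)^{-1} \le \rho^{-1}$, the sequence $\{\alpha_t\}$ is nonincreasing; combined with $r(x_t) \ge \inf r$ this gives $\sum_{t=0}^T \alpha_t(r(x_t) - r(x_{t+1})) \le \alpha_0(r(x_0) - \inf r)$. Collecting everything and using $\tfrac1\lambda - \tau - \rho = \tfrac{1-\lambda(\tau+\rho)}{\lambda}$ produces
\begin{align*}
\tfrac{1-\lambda(\tau+\rho)}{\lambda^2}\sum_{t=0}^T \tfrac{\eta_t}{1-\eta_t\rho}\,\E{D_\Phi(\hat x_t, x_t)} &\le F_\lambda^\Phi(x_0) - \min F \\
&\quad + \tfrac{\lipsymb^2}{4\lambda}\sum_{t=0}^T\tfrac{\eta_t^2}{1-\eta_t\rho} + \tfrac{\eta_0}{\lambda(1-\eta_0\rho)}\bigl(r(x_0) - \inf r\bigr).
\end{align*}

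Finally, since $t^*$ is drawn from the fixed distribution $\mathbb{P}(t^*=t) \propto \eta_t/(1-\eta_t\rho)$ independently of the samples $\xi_0,\dots,\xi_T$, the quantity $\E{D_\Phi(\hat x_{t^*}, x_{t^*})}$ equals the weighted average $\bigl(\sum_t \tfrac{\eta_t}{1-\eta_t\rho}\E{D_\Phi(\hat x_t, x_t)}\bigr)\big/\bigl(\sum_t\tfrac{\eta_t}{1-\eta_t\rho}\bigr)$; dividing the displayed inequality through by $\tfrac{1-\lambda(\tau+\rho)}{\lambda^2}\sum_t \tfrac{\eta_t}{1-\eta_t\rho}$ and recalling $\hat x_{t^*} = \prox_{\lambda F}^\Phi(x_{t^*})$ gives exactly the asserted bound. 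I expect the only genuinely nonroutine step to be the summation-by-parts handling of the regularizer differences with the non-constant weights $\alpha_t$ (including the verification that $\{\alpha_t\}$ is nonincreasing); everything else is bookkeeping layered on top of Lemma~\ref{lem:3pt}.
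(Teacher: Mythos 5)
Your proposal is correct and follows essentially the same route as the paper: use Lemma~\ref{lem:3pt} to derive the one-step descent inequality for $F_\lambda^\Phi$ along the iterates (via $F_\lambda^\Phi(x_{t+1})\le F(\hat x_t)+\tfrac1\lambda D_\Phi(\hat x_t,x_{t+1})$ and $F(\hat x_t)=F_\lambda^\Phi(x_t)-\tfrac1\lambda D_\Phi(\hat x_t,x_t)$), telescope, bound the weighted regularizer differences by $\tfrac{\eta_0}{\lambda(1-\eta_0\rho)}(r(x_0)-\inf r)$ using monotonicity of the weights (this is exactly the paper's auxiliary Lemma~\ref{lem:aux-lem}), and finally identify the weighted average with $\EE[D_\Phi(\hat x_{t^*},x_{t^*})]$ via the sampling distribution of $t^*$. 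Your explicit check that $u\mapsto u/(1-u\rho)$ is increasing, so that $\eta_t/(1-\eta_t\rho)$ is nonincreasing, is a detail the paper leaves implicit but is not a different argument.
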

\begin{proof}
Using the definitions of $x_{t+1}$ and $\hat x_t$ along with \Cref{lem:3pt}, we obtain
\begin{align*}
\Et{F^\Phi_\lambda(x_{t+1})}
&\le \Et{ F(\hat x_t) + \frac{1}{\lambda} D_\Phi (\hat x_t, x_{t+1}) } \\
&\le \Et { F(\hat x_t) + \frac{1}{\lambda(1- \eta_t \rho) } \left( \left( 1 +  \eta_t \left(\tau - \frac{1}{\lambda}\right) \right) D_\Phi (\hat x_t, x_t)  + \frac{(\lipsymb \eta_t)^2}{4}  \right) } \\
&\hspace{20pt} + \frac{\eta_t}{\lambda(1-\eta_t\rho)}\Et{(r(x_{t}) - r(x_{t+1}))}\\
& = F^\Phi_\lambda(x_t) + \frac{\eta_t}{\lambda} \left(\frac{ \tau +\rho - 1/\lambda}{1 - \eta_t \rho} \right) D_\Phi (\hat x_t, x_t)  +\frac{(\lipsymb \eta_t)^2}{4\lambda(1 - \eta_t \rho)} \\
&\hspace{20pt} + \frac{\eta_t}{\lambda(1-\eta_t\rho)}\Et{r(x_{t}) - r(x_{t+1})}.
\end{align*}
Recursing and applying the tower rule for expectations, we obtain
\begin{equation}\label{eqn:key_recurs_est}
\begin{aligned}
\E{F_\lambda^\Phi(x_{T+1})}
&\le 
F_\lambda^\Phi(x_0) + \sum_{t=0}^T \left(   \frac{\eta_t}{\lambda} \left(\frac{ \tau +\rho - 1/\lambda}{1 - \eta_t \rho} \right) \mathbb{E}[D_\Phi (\hat x_t, x_t)]  +\frac{(\lipsymb \eta_t)^2}{4\lambda(1 - \eta_t \rho)}  \right) \\
&\hspace{20pt} +\sum_{t=0}^T \frac{\eta_t}{\lambda(1-\eta_t\rho)}\E{r(x_{t}) - r(x_{t+1})}.
\end{aligned}
\end{equation}
Taking into account that $\eta_t$ is nonincreasing yields the inequality
$$\sum_{t=0}^T \tfrac{\eta_t}{\lambda(1-\eta_t\rho)}{(r(x_{t}) - r(x_{t+1}))}\leq \tfrac{\eta_0}{\lambda(1-\eta_0\rho)} (r(x_0) - \inf r).$$
See the auxiliary Lemma~\ref{lem:aux-lem} for a verification. Combining this bound with \eqref{eqn:key_recurs_est}, using the inequality  $\E{F_\lambda(x_{T+1})} \ge \min F$, and rearranging, we conclude
\begin{align*}
  \frac{1}{\lambda} \left(\frac{1}{\lambda} - \tau - \rho \right)\sum_{t=0}^T \frac{\eta_t}{1-\eta_t\rho} \mathbb{E}[D_\Phi (\hat x_t, x_t)]
&\le 
F_\lambda^\Phi(x_0) - \min F+ \lipsymb^2 \sum_{t=0}^T  \frac{\eta_t^2}{4\lambda(1-\eta_t\rho)} \\
&\hspace{20pt} + \frac{\eta_0}{\lambda(1-\eta_0\rho)} (r(x_0) - \inf r),
\end{align*}
or equivalently
\begin{align*}
\sum_{t=0}^T \frac{\eta_t}{1-\eta_t\rho} \E{D_\Phi (\hat x_t, x_t)}
&\le 
\frac{\lambda^2 (F_\lambda^\Phi(x_0) - \min F)}{ 1-\lambda( \tau +\rho)} 
+ \frac{\lambda^2 \lipsymb^2 }{ 1-\lambda( \tau +\rho)} \sum_{t=0}^T  \frac{\eta_t^2}{4\lambda(1-\eta_t\rho)} \\
&\hspace{20pt} + \frac{\lambda^2\eta_0}{\lambda(1-\lambda( \tau +\rho))(1-\eta_0\rho)}( r(x_0)-\inf r).
\end{align*}
Dividing through by $\sum_{t=0}^{T} \frac{\eta_t}{1-\eta_t \rho}$ and recognizing the left-hand-side as $\EE[D_{\Phi}(\hat x_{t^*},x_{t^*})]$, the result follows.
\end{proof}

As an immediate corollary of \Cref{thm:convergence}, we have the following rate of convergence when the stepsize $\eta_t$ is constant. 

\begin{corollary}[Convergence rate for constant stepsize] \label{cor:convergence_const}
For some $\alpha>0$, set $\eta_t=\frac{1}{\lambda^{-1}+\alpha^{-1}\sqrt{T+1}}$ for all indices $t=1,\ldots, T$. Then the point $x_{t^*}$ returned by Algorithm~\ref{alg:stoc_prox_linear} satisfies:
\begin{align*}
	&\EE\left[D_{\Phi}\left(\prox_{\lambda F}^\Phi (x_{t^*}),x_{t^*}\right)\right]
	\le \frac{\lambda^2( 
		F_\lambda^\Phi(x_0) - \min F) +  \frac{\lambda\lipsymb^2\alpha^2}{4}  +  \frac{\lambda((r(x_0)-\inf r))}{\lambda^{-1}-\rho+\alpha^{-1}}   }{1 - \lambda (\tau + \rho)}\cdot\left(\frac{\lambda^{-1}-\rho}{T+1}+\frac{1}{\alpha\sqrt{T+1}}\right) .
\end{align*}

\end{corollary}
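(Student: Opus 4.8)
The plan is to specialize Theorem~\ref{thm:convergence} to the constant choice $\eta_t\equiv\eta:=\bigl(\lambda^{-1}+\alpha^{-1}\sqrt{T+1}\bigr)^{-1}$ and then clean up the resulting expression. First I would check that this stepsize is admissible for Algorithm~\ref{alg:stoc_prox}: it is positive, trivially nonincreasing, and satisfies $\eta<\lambda$ because $\lambda^{-1}<\lambda^{-1}+\alpha^{-1}\sqrt{T+1}$. Hence Theorem~\ref{thm:convergence} applies verbatim.

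Next I would collapse the weighted sums appearing in that theorem. With a constant stepsize, $\sum_{t=0}^{T}\frac{\eta_t}{1-\eta_t\rho}=(T+1)\frac{\eta}{1-\eta\rho}$ and $\sum_{t=0}^{T}\frac{\eta_t^2}{4\lambda(1-\eta_t\rho)}=(T+1)\frac{\eta^2}{4\lambda(1-\eta\rho)}$, so the three quotients inside the bound of Theorem~\ref{thm:convergence} become, respectively, $(F_\lambda^\Phi(x_0)-\min F)\cdot\frac{1-\eta\rho}{\eta(T+1)}$, $\frac{\lipsymb^2\eta}{4\lambda}$, and $\frac{r(x_0)-\inf r}{\lambda(T+1)}$. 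The crucial identity, read off directly from the definition of $\eta$, is $\frac{1-\eta\rho}{\eta}=\frac{1}{\eta}-\rho=\lambda^{-1}-\rho+\alpha^{-1}\sqrt{T+1}$, so that
$$\frac{1-\eta\rho}{\eta(T+1)}=\frac{\lambda^{-1}-\rho}{T+1}+\frac{1}{\alpha\sqrt{T+1}}=:C,$$
which is exactly the trailing factor in the claimed bound; this immediately handles the first quotient.

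For the other two quotients I would bound $\eta$ and $\tfrac{1}{T+1}$ above by multiples of $C$. From $\lambda^{-1}\ge 0$ and $\lambda^{-1}-\rho\ge 0$ (the latter because $\lambda<(\tau+\rho)^{-1}$), one gets $(\lambda^{-1}+\alpha^{-1}\sqrt{T+1})(\lambda^{-1}-\rho+\alpha^{-1}\sqrt{T+1})\ge\alpha^{-2}(T+1)$, which rearranges (using $1-\eta\rho=\eta(\lambda^{-1}-\rho+\alpha^{-1}\sqrt{T+1})$) to $\frac{\eta^2(T+1)}{1-\eta\rho}\le\alpha^2$, i.e. $\eta\le\alpha^2 C$; hence $\frac{\lipsymb^2\eta}{4\lambda}\le\frac{\lipsymb^2\alpha^2}{4\lambda}C$. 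Similarly $\sqrt{T+1}\ge 1$ gives $\lambda^{-1}-\rho+\alpha^{-1}\sqrt{T+1}\ge\lambda^{-1}-\rho+\alpha^{-1}$, hence $\frac{1}{T+1}\le\frac{C}{\lambda^{-1}-\rho+\alpha^{-1}}$, and therefore $\frac{r(x_0)-\inf r}{\lambda(T+1)}\le\frac{r(x_0)-\inf r}{\lambda(\lambda^{-1}-\rho+\alpha^{-1})}C$. Substituting the three bounds back into Theorem~\ref{thm:convergence}, multiplying through by the outer factor $\frac{\lambda^2}{1-\lambda(\tau+\rho)}$, and pulling out the common factor $C$ produces precisely the asserted inequality.

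There is no genuine obstacle here: the corollary is a direct arithmetic specialization of Theorem~\ref{thm:convergence}. The only places that need care are verifying admissibility of the stepsize and the two elementary inequalities $\eta\le\alpha^2 C$ and $(T+1)^{-1}\le C/(\lambda^{-1}-\rho+\alpha^{-1})$ used to convert the raw estimate into the stated closed form, both of which ultimately reduce to $\sqrt{T+1}\ge 1$ and $\lambda^{-1}-\rho\ge 0$.
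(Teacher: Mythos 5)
Your proposal is correct and follows exactly the route the paper intends: the corollary is stated as an immediate specialization of Theorem~\ref{thm:convergence} to the constant stepsize $\eta_t\equiv\bigl(\lambda^{-1}+\alpha^{-1}\sqrt{T+1}\bigr)^{-1}$, and your arithmetic (the identity $\tfrac{1-\eta\rho}{\eta(T+1)}=\tfrac{\lambda^{-1}-\rho}{T+1}+\tfrac{1}{\alpha\sqrt{T+1}}$ plus the two elementary bounds $\eta\le\alpha^2 C$ and $(T+1)^{-1}\le C/(\lambda^{-1}-\rho+\alpha^{-1})$, both resting on $\lambda^{-1}-\rho\ge 0$ and $\sqrt{T+1}\ge 1$) correctly fills in the computation the paper leaves implicit.
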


\section{Mirror descent: smoothness and finite variance}\label{sec:spec_anal_mirror descent}

 Assumptions~\ref{assum:sampling}-(A5) are reasonable for the examples described in Section~\ref{sec:examples}, being in line with standard conditions in the literature. However, in the special case that $f$ is smooth and we apply stochastic mirror descent,  Assumption~\ref{assum:lip} is nonstandard. Ideally, one would like to replace this assumption with a bound on the variance of the stochastic estimator of the gradient. In this section, we show that this is indeed possible by slightly modifying the argument in Section~\ref{sec:conv_anal}.
 
 Henceforth, let $\Phi$ be a Legendre function and set $U:=\inter(\dom \Phi)$. In this section, we make the following assumptions:
\begin{enumerate}[label=(B\arabic*)]
\item \textbf{(Sampling)} \label{assum:sampling_2}
 It is possible to generate i.i.d. realizations $\xi_1, \ldots, \xi_T \sim P$

\item \textbf{(Stochastic gradient)} \label{assum:one_sided_2}
There is a measurable mapping $G: U \times \Omega \rightarrow \R^d$ satisfying  
\[ \Exi{G(x, \xi) } = \nabla f(x), \qquad \forall x \in U  \cap \dom r.\]

\item \textbf{(Relative Smoothness)} \label{assum:smooth1}
There exist real $\tau, M \geq 0$, such that
\[-\tau D_\Phi(y, x) \le f(y) - f(x) - \ip{\nabla f(x)}{y-x} \le M D_\Phi (y,x)\qquad \forall x,y\in U  \cap \dom r. \]

\item \textbf{(Relative convexity)}\label{assump_abar4} The function $r$ is $\rho$-weakly convex relative to $\Phi.$

\item \textbf{(Strong convexity of $\Phi$)} \label{assum:sc}
The Legendre function $\Phi$ is 1-strongly convex with respect to some norm $\| \cdot \|$.

\item \textbf{(Finite variance)} \label{assum:smooth2}
The following variance is finite: 
\[ 
\EE_{\xi}\left[ \left\|G(x, \xi) - \nabla f(x) \right\|_*^2 \right] \leq \frac{\sigma^2}{2} < \infty.
\]

\end{enumerate}

Henceforth, we denote by $f_{x}(\cdot, \xi)$ the linear models
\[ f_{x}(y,\xi) := f(x) + \ip{G(x, \xi)}{y-x}, \]
which are built from the stochastic gradient estimator $G$. With this notation in hand, let us compare Assumptions~\ref{assum:sampling_2}-\ref{assum:sc} with Assumptions~\ref{assum:sampling}-\ref{assum:lip}. Evidently, Assumptions~\ref{assum:sampling_2} and~\ref{assum:sampling} are identical. Upon taking expectations, Assumptions~\ref{assum:one_sided_2} and~\ref{assum:smooth1} imply the stochastic one-sided accuracy property~\ref{assum:one_sided} for the linear models $f_x(\cdot,\xi)$, while \ref{assump_abar4} directly implies \ref{it:convex_model}.
Assumptions \ref{assum:sc} and \ref{assum:smooth2} replace the Lipschitzian property~\ref{assum:lip}. 

Finally, we reiterate that the relative smoothness property in~\ref{assum:smooth1} was recently introduced in \cite{descentBBT,rel_smooth_freund} for smooth convex minimization, and extended to smooth nonconvex problems in \cite{nonconv_teb} and to nonsmooth stochastic problems in  \cite{Lu_mirror_weird,rick_rel_smooth}. This property allows for higher order growth than the standard Lipschitz gradient assumptions, commonly analyzed in the literature. We refer the reader to \cite{descentBBT,rel_smooth_freund} for various examples of Bregman functions that arise in applications.

For the sake of clarity, Algorithm~\ref{alg:stoc_prox_linear} instantiates Algorithm~\ref{alg:stoc_prox} in our setting.

	\begin{algorithm}[H]
		\KwData{$x_0\in U\cap \dom r$,  positive $\lambda < (\tau+\rho)^{-1}$, a sequence $\{\eta_t\}_{t\geq 0} \subseteq \left(0,\frac{\lambda}{1+\lambda M} \right)$, and iteration count $T$}
		{\bf Step } $t=0,\ldots,T$:\\		
		\begin{equation*}\left\{
		\begin{aligned}
		&\textrm{Sample } \xi_t \sim P\\
		& \textrm{Set } x_{t+1} = \argmin_{x}~ \left\{ \ip{ G(x_t, \xi_t)  }{x}+r(x) + \tfrac{1}{\eta_t} D_\Phi(x, x_t)\right\}
		\end{aligned}\right\},
		\end{equation*}
		Sample $t^*\in \{0,\ldots,T\}$ according to the discrete probability distribution
		$$\mathbb{P}(t^*=t)\propto \frac{\eta_t}{1-\eta_t\rho}.$$
		{\bf Return} $x_{t^*}$		
		\caption{Mirror descent for smooth minimization
		}
		\label{alg:stoc_prox_linear}
	\end{algorithm}

As in Section~\ref{sec:conv_anal}, the convergence analysis relies on the following key lemma. We let $\{x_t\}_{t\geq 0}$ be the iterates generated by Algorithm~\ref{alg:stoc_prox_linear} and let $\{\xi_t\}_{t\geq 0}$ be the corresponding samples used. For each index $t\geq 0$, we continue to use the notation $\hat x_t=\prox_{\lambda F}^\Phi (x)$ and let 
 $\mathbb{E}_{t}[\cdot]$ to denote the expectation conditioned on all the realizations $\xi_0,\xi_1,\ldots, \xi_{t-1}$.

\begin{lemma} \label{lem:3pt2}
For each iteration $t\geq 0$, the iterates of Algorithm~\ref{alg:stoc_prox_linear} satisfy
\[  \Et{ D_\Phi (\hat x_t, x_{t+1}) }
 \le \frac{1 + \eta_t \tau - \eta_t/\lambda}{ (1-\eta_t\rho) }\cdot 
  D_\Phi (\hat x_t, x_t)  +\frac{1}{4}\cdot \frac{(\sigma \eta_t)^2}{(1-  \eta_t(M+ \tfrac{1}{\lambda}))(1-\eta_t\rho)}.\]
\end{lemma}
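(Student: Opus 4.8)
The plan is to mirror the proof of Lemma~\ref{lem:3pt}, but to carefully track the extra slack afforded by smoothness (the parameter $M$) so that the stochastic gradient error can be absorbed using the finite-variance assumption \ref{assum:smooth2} rather than the pointwise Lipschitz bound \ref{assum:lip}. First I would apply the three-point inequality of Lemma~\ref{lem:threepoint} with $g = \ip{G(x_t,\xi_t)}{\cdot} + r(\cdot) + \rho D_\Phi(\cdot,x_t)$ and with $(\tfrac{1}{\eta_t}-\rho)D_\Phi(\cdot,x_t)$ in the role of the divergence, evaluated at $x = \hat x_t$. Rearranging and taking the conditional expectation $\Et{\cdot}$ yields, just as in \eqref{eq:init_bound}, a lower bound on $\tfrac{1}{\eta_t}\Et{(1-\eta_t\rho)D_\Phi(\hat x_t,x_{t+1}) - D_\Phi(\hat x_t,x_t) + D_\Phi(x_{t+1},x_t)}$ in terms of $\Et{\ip{G(x_t,\xi_t)}{\hat x_t - x_{t+1}}}$.

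The key step is to bound this inner-product term. I would write $G(x_t,\xi_t) = \nabla f(x_t) + e_t$ where $e_t := G(x_t,\xi_t) - \nabla f(x_t)$, so that $\Et{e_t}=0$ and $\Et{\|e_t\|_*^2}\le \sigma^2/2$. Split $\ip{G(x_t,\xi_t)}{\hat x_t - x_{t+1}} = \ip{\nabla f(x_t)}{\hat x_t - x_t} + \ip{\nabla f(x_t)}{x_t - x_{t+1}} + \ip{e_t}{\hat x_t - x_{t+1}}$. For the first term, the left inequality in \ref{assum:smooth1} (i.e. $\tau$-relative weak convexity of $f$) gives $\ip{\nabla f(x_t)}{\hat x_t - x_t}\le f(\hat x_t) - f(x_t) + \tau D_\Phi(\hat x_t,x_t)$, and then the Bregman-proximal optimality \eqref{eqn:prox_point_defn_proof} converts $f(\hat x_t)-f(x_t)$ into $r(x_t)-r(\hat x_t) - \tfrac{1}{\lambda}D_\Phi(\hat x_t,x_t)$. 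For the second term, the right inequality in \ref{assum:smooth1} gives $\ip{\nabla f(x_t)}{x_t - x_{t+1}} \le f(x_t) - f(x_{t+1}) + M D_\Phi(x_{t+1},x_t)$; the $f(x_t)-f(x_{t+1})$ part is used to cancel residual $f$-terms, while $M D_\Phi(x_{t+1},x_t)$ is the new contribution that eats into the coefficient of $D_\Phi(x_{t+1},x_t)$. For the cross term, using \ref{assum:sc} in the form $D_\Phi(x_{t+1},x_t)\ge \tfrac12\|x_{t+1}-x_t\|^2$ together with Young's inequality, I would bound $\ip{e_t}{x_t - x_{t+1}} \le \tfrac{\eta_t}{2(1-\eta_t(M+1/\lambda))}\|e_t\|_*^2 + \tfrac{1-\eta_t(M+1/\lambda)}{2\eta_t}\|x_{t+1}-x_t\|^2$ (the second summand being absorbed by the remaining divergence slack), and note $\Et{\ip{e_t}{\hat x_t - x_t}}=0$ since $\hat x_t,x_t$ are $\mathcal F_t$-measurable.

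Collecting terms and multiplying through by $\eta_t$ should yield, after the $f(x_t)-f(x_{t+1})$ and $r$-terms cancel and the $\|x_{t+1}-x_t\|^2$ pieces are absorbed, an inequality of the shape
\[
(1-\eta_t\rho)\Et{D_\Phi(\hat x_t,x_{t+1})} \le \Bigl(1+\eta_t\tau - \tfrac{\eta_t}{\lambda}\Bigr)D_\Phi(\hat x_t,x_t) + \tfrac{\eta_t^2\,\Et{\|e_t\|_*^2}}{4(1-\eta_t(M+1/\lambda))},
\]
and then substituting $\Et{\|e_t\|_*^2}\le\sigma^2/2$ and dividing by $1-\eta_t\rho$ gives the claim. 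The main obstacle is the bookkeeping: getting all the coefficients of $D_\Phi(x_{t+1},x_t)$ — namely the $+1/\eta_t$ from the three-point inequality, the $-M$ from relative smoothness, the $-1/\lambda$ implicit in the $f(\hat x_t)$ chain, and the Young's-inequality split of the cross term — to line up so that the net coefficient is nonnegative (which is exactly why $\eta_t < \lambda/(1+\lambda M)$ is imposed) and so that the leftover $\|x_{t+1}-x_t\|^2$ slack is precisely what is needed to complete the square against the noise term, producing the stated denominator $(1-\eta_t(M+1/\lambda))$. One must also be mindful that the optimal Young parameter is chosen to make this leftover slack match, rather than simply maximizing in an auxiliary variable $\gamma$ as in Lemma~\ref{lem:3pt}.
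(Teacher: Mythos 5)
Your overall architecture matches the paper's: apply the three-point inequality to $g=\ip{G(x_t,\xi_t)}{\cdot}+r(\cdot)+\rho D_\Phi(\cdot,x_t)$ at $x=\hat x_t$, use the two inequalities of \ref{assum:smooth1} on the two pieces of the inner product, and absorb the zero-mean noise $e_t$ into the leftover $D_\Phi(x_{t+1},x_t)$ slack via \ref{assum:sc} and a quadratic (Young/complete-the-square) argument; your martingale split $\Et{\ip{e_t}{\hat x_t-x_t}}=0$ plus Young's inequality is equivalent to the paper's Cauchy--Schwarz-and-maximize-in-$\gamma$ step, and the constants work out the same up to a harmless factor.

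The genuine gap is in the cancellation step. You invoke the proximal optimality of $\hat x_t$ in the form \eqref{eqn:prox_point_defn_proof}, i.e.\ comparing $\hat x_t$ against $x_t$, to convert $f(\hat x_t)-f(x_t)$ into $r(x_t)-r(\hat x_t)-\tfrac1\lambda D_\Phi(\hat x_t,x_t)$. But then the term $f(x_t)-f(x_{t+1})$ coming from the right inequality of \ref{assum:smooth1} has nothing left to cancel against, and after adding the $r(\hat x_t)-r(x_{t+1})$ term from the three-point inequality you are left with $\Et{F(x_t)-F(x_{t+1})}$ on the right-hand side. This quantity can be positive, does not appear in the statement of Lemma~\ref{lem:3pt2}, and cannot be discarded, so the asserted cancellation ``after the $f(x_t)-f(x_{t+1})$ and $r$-terms cancel'' does not hold along your route. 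The fix is the comparison the paper actually uses: test the proximal subproblem defining $\hat x_t$ against $x_{t+1}$ rather than $x_t$, namely
\begin{equation*}
f(\hat x_t)+r(\hat x_t)+\tfrac{1}{\lambda}D_\Phi(\hat x_t,x_t)\le f(x_{t+1})+r(x_{t+1})+\tfrac{1}{\lambda}D_\Phi(x_{t+1},x_t).
\end{equation*}
This simultaneously cancels both the $f$- and $r$-differences (which is why, unlike Lemma~\ref{lem:3pt}, no $r(x_t)-r(x_{t+1})$ term survives here) and contributes the extra $\tfrac1\lambda D_\Phi(x_{t+1},x_t)$, which together with the $M D_\Phi(x_{t+1},x_t)$ from smoothness is precisely what reduces the available slack to $1-\eta_t(M+\tfrac1\lambda)$ and produces that denominator in the noise term; with your comparison against $x_t$ that denominator has no reason to appear. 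A secondary, minor point: with the Young parameter you chose, the noise contribution is $\eta_t^2\,\Et{\|e_t\|_*^2}/\bigl(2(1-\eta_t(M+\tfrac1\lambda))\bigr)$, not with a $4$ in the denominator; the stated bound is then recovered upon substituting $\Et{\|e_t\|_*^2}\le\sigma^2/2$.
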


\begin{proof}
Following the initial steps of the proof of \Cref{lem:3pt}, we arrive at the estimate \eqref{eq:init_bound}, namely 
\begin{align} \label{eq:init_bound_2}
 &\frac{1}{\eta_t} \Et{ (1-\eta_t\rho) D_\Phi (\hat x_t, x_{t+1}) - D_\Phi (\hat x_t, x_{t}) + D_\Phi (x_{t+1}, x_{t}) }\nonumber\\
 &\hspace{20pt}\le
 \Et{ f_{x_t}(\hat x_t, \xi_t) + r(\hat x_t)-  f_{x_t}(x_{t+1}, \xi_t) - r(x_{t+1}) } .
 \end{align}
  We now seek to bound the right-hand side of \eqref{eq:init_bound_2} using \ref{assum:smooth1}-\ref{assum:smooth2}.
To that end, the following bound  will be useful: 
\begin{align*}
 f_{x_t}( x_{t+1}, \xi_t) 
 &= f(x_t, \xi_t) + \dotp{G(x_t, \xi_t), x_{t+1} - x_t} \\
&\geq  f(x_t, \xi_t) +  \dotp{\nabla f(x_t), x_{t+1} - x_t} - \|G(x_t, \xi_t) - \nabla f(x_t)\|_* \|x_{t+1} - x_t\|.
\end{align*}

\noindent
Taking expectations of both sides and applying Cauchy-Schwarz and \ref{assum:smooth1}-\ref{assum:smooth2}, we obtain
\begin{align}
 \Et{f_{x_t}( x_{t+1}, \xi_t)}
 &\ge  \EE_t\left[f(x_t) +  \dotp{\nabla f(x_t), x_{t+1} - x_t}\right] - \EE_t\left[\|G(x_t, \xi_t) - \nabla f(x_t)\|_* \|x_{t+1} - x_t\|\right]  \nonumber
 \nonumber \\
&\geq \EE_t\left[f(x_{t+1}) - M D_\Phi (x_{t+1}, x_t) \right] - \sqrt{\EE_t\left[\|G(x_t, \xi_t) - \nabla f(x_t)\|_*^2\right]} \sqrt{\EE_t\left[\|x_{t+1} - x_t\|^2\right]} 
\nonumber \\
&\geq \EE_t\left[f(x_{t+1}) - M D_\Phi (x_{t+1}, x_t) \right] - \sigma\sqrt{\EE_t\left[\tfrac{1}{2}\|x_{t+1} - x_t\|^2\right]} \nonumber\\
&\geq \EE_t\left[f(x_{t+1}) - M D_\Phi (x_{t+1}, x_t) \right] - \sigma\sqrt{\EE_t\left[ D_\Phi (x_{t+1},  x_t )\right]} .
\label{eq:rhs}
\end{align}

\noindent
Continuing, add $ f_{x_t} (\hat x_t , \xi_t)$ to both sides of \eqref{eq:rhs}, rearrange, and apply \ref{assum:smooth1} to obtain
\begin{align}
 &\Et{  f_{x_t} (\hat x_t , \xi_t) - f_{x_t}( x_{t+1}, \xi_t)  }\nonumber\\
 &\hspace{20pt}\leq \Et{  f_{x_t} (\hat x_t , \xi_t) - f( x_{t+1}) +  MD_\Phi(x_{t+1}, x_t) }  +  \sigma\sqrt{\EE_t\left[ D_\Phi (x_{t+1},  x_t )\right]}  \nonumber \\
&\hspace{20pt}\leq \Et{  f (\hat x_t) - f( x_{t+1}) +  \tau D_\Phi(\hat x_t, x_t) + MD_\Phi(x_{t+1}, x_t)  }  +  \sigma\sqrt{\EE_t\left[ D_\Phi (x_{t+1},  x_t )\right]}. \label{eq:a2_and_a4}
\end{align}
On the other hand, by the definition of $\hat x_t$ we have
\[ f(\hat x_t) + r(\hat x_t) + \frac{1}{\lambda} D_\Phi(\hat x_t, x_t) \le f(x_{t+1}) + r(x_{t+1})+ \frac{1}{\lambda} D_\Phi( x_{t+1}, x_t). \]
Inserting this equation into \eqref{eq:a2_and_a4}, we obtain
\begin{align}
&\Et{  f(\hat x_t) + r(\hat x_t) - f( x_{t+1})  - r(x_{t+1}) } \nonumber\\
&\leq \Et{    \left( M + \tfrac{1}{\lambda}\right)  D_\Phi( x_{t+1}, x_t) + \left( \tau - \tfrac{1}{\lambda} \right) D_\Phi(\hat x_t, x_t)  } +\sigma\sqrt{\EE_t\left[ D_\Phi (x_{t+1},  x_t )\right]}. \label{eq:a2_and_a4_new}
\end{align}

\noindent
Combining \eqref{eq:a2_and_a4_new} with \eqref{eq:init_bound_2} gives the estimate
\begin{align*}
& \frac{1}{\eta_t} \Et{  (1-\eta_t\rho) D_\Phi (\hat x_t, x_{t+1}) - D_\Phi (\hat x_t, x_{t}) + D_\Phi (x_{t+1}, x_{t}) }\\
 &\hspace{20pt}\le \left(M+ \tfrac{1}{\lambda} \right)\Et { D_\Phi( x_{t+1}, x_t)} + \left( \tau - \tfrac{1}{\lambda} \right) D_\Phi (\hat x_t, x_t) + \sigma\sqrt{\EE_t\left[ D_\Phi (x_{t+1},  x_t )\right]},
 \end{align*}
 Multiplying through by $\eta_t$ and rearranging, we obtain
\begin{align*}
 &\Et{  (1-\eta_t\rho) D_\Phi (\hat x_t, x_{t+1}) + \left( 1-  \eta_t \left(M+ \tfrac{1}{\lambda} \right)\right) D_\Phi (x_{t+1}, x_{t}) }\\
 &\hspace{20pt}\le
\left( 1 + \eta_t(\tau - \tfrac{1}{\lambda}) \right) D_\Phi (\hat x_t, x_t)  +  \sigma\eta_t\sqrt{\EE_t\left[ D_\Phi (x_{t+1},  x_t )\right]}.
\end{align*}
Now define $\gamma := \sqrt{\EE_t\left[ D_\Phi (x_{t+1},  x_t )\right]}$, and rewrite the above as
 \[ 
 \Et{  (1-\eta_t\rho) D_\Phi (\hat x_t, x_{t+1}) }
 \le
 \left( 1 + \eta_t \tau - \tfrac{\eta_t}{\lambda} \right) D_\Phi (\hat x_t, x_t)  + \sigma \eta_t \gamma - \left(1-  \eta_t(M+ \tfrac{1}{\lambda})\right) \gamma^2.
 \]
 Maximizing the right hand side in $\gamma$, i.e. taking $\gamma = \tfrac{\sigma \eta_t}{2\left(1-  \eta_t\left(M+ \tfrac{1}{\lambda}\right)\right)}$, we conclude
  \[ 
 \Et{  (1-\eta_t\rho) D_\Phi (\hat x_t, x_{t+1}) }
 \le
 \left( 1 + \eta_t \tau - \tfrac{\eta_t}{\lambda} \right) D_\Phi (\hat x_t, x_t)  +\frac{1}{4}\cdot \frac{(\sigma \eta_t)^2}{1-  \eta_t(M+ \tfrac{1}{\lambda})},
 \]
as desired.
\end{proof}

With \Cref{lem:3pt2} at hand, we can now establish a convergence rate of Algorithm~\ref{alg:stoc_prox_linear}.

\begin{theorem} \label{thm:convergence2}
The point $x_{t^*}$ returned by Algorithm~\ref{alg:stoc_prox_linear} satisfies:
\[ 
\E{ D_{\Phi}\left(\prox_{\lambda F}^\Phi (x_{t^*}),x_{t^*}\right) }
\le 
\tfrac{\lambda}{(1-(\tau+\rho)\lambda)} \left(
\frac{\lambda(F^\Phi_\lambda(x_0) - \min F)}{ \sum_{t=0}^T \frac{\eta_t}{1-\eta_t\rho}} +  
\frac{\sigma^2 \sum_{t=0}^T  \frac{ \eta_t^2 }{(1-  \eta_t(M+ 1/\lambda ))(1-\eta_t\rho)} } { 4  \sum_{t=0}^T \frac{\eta_t }{1-\eta_t\rho}}
\right).
\]
\end{theorem}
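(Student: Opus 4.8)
The plan is to mirror the proof of \Cref{thm:convergence}, with \Cref{lem:3pt2} playing the role of \Cref{lem:3pt}. Because \Cref{lem:3pt2} carries no leftover $r(x_t)-r(x_{t+1})$ term, the argument will in fact be shorter, and no auxiliary telescoping lemma for the regularizer is needed.

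First I would establish a one-step decrease for the $\Phi$-envelope. Since $x_{t+1}$ is feasible in the infimum defining $F^\Phi_\lambda(x_{t+1})$ and $\hat x_t=\prox^\Phi_{\lambda F}(x_t)$, we have $\Et{F^\Phi_\lambda(x_{t+1})}\le F(\hat x_t)+\tfrac1\lambda\Et{D_\Phi(\hat x_t,x_{t+1})}$. Substituting the bound from \Cref{lem:3pt2}, using the identity $F(\hat x_t)+\tfrac1\lambda D_\Phi(\hat x_t,x_t)=F^\Phi_\lambda(x_t)$, and simplifying the coefficient via $\tfrac{1+\eta_t\tau-\eta_t/\lambda}{1-\eta_t\rho}-1=\tfrac{\eta_t(\tau+\rho-1/\lambda)}{1-\eta_t\rho}$, I obtain
\[
\Et{F^\Phi_\lambda(x_{t+1})}\le F^\Phi_\lambda(x_t)-\frac{\eta_t(1/\lambda-\tau-\rho)}{\lambda(1-\eta_t\rho)}\,D_\Phi(\hat x_t,x_t)+\frac{(\sigma\eta_t)^2}{4\lambda\bigl(1-\eta_t(M+1/\lambda)\bigr)(1-\eta_t\rho)}.
\]
Here $\lambda<(\tau+\rho)^{-1}$ forces $1/\lambda-\tau-\rho>0$ (so the middle term is a genuine decrease), the step-size restriction $\eta_t<\tfrac{\lambda}{1+\lambda M}$ forces $1-\eta_t(M+1/\lambda)>0$ (so the noise term is finite and nonnegative), and strong convexity of $\Phi$ from \ref{assum:sc} ensures the Bregman proximal subproblems are solvable, so $\hat x_t$ and $F^\Phi_\lambda$ are well defined.

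Next I would take full expectations, apply the tower rule, and sum the above over $t=0,\dots,T$. The $F^\Phi_\lambda$ terms telescope; using the crude bound $\E{F^\Phi_\lambda(x_{T+1})}\ge\min F$ (valid since $D_\Phi\ge 0$) gives
\[
\frac{1/\lambda-\tau-\rho}{\lambda}\sum_{t=0}^T\frac{\eta_t}{1-\eta_t\rho}\,\E{D_\Phi(\hat x_t,x_t)}\le F^\Phi_\lambda(x_0)-\min F+\frac{\sigma^2}{4\lambda}\sum_{t=0}^T\frac{\eta_t^2}{\bigl(1-\eta_t(M+1/\lambda)\bigr)(1-\eta_t\rho)}.
\]
Multiplying through by $\tfrac{\lambda^2}{1-\lambda(\tau+\rho)}$, dividing by $\sum_{t=0}^T\tfrac{\eta_t}{1-\eta_t\rho}$, and recognizing the resulting weighted average on the left as $\E{D_\Phi(\hat x_{t^*},x_{t^*})}$ by the definition of the sampling distribution $\mathbb{P}(t^*=t)\propto\tfrac{\eta_t}{1-\eta_t\rho}$ in Algorithm~\ref{alg:stoc_prox_linear}, yields the claimed estimate.

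I do not expect a serious obstacle here: all of the probabilistic and variational content has already been distilled into \Cref{lem:3pt2}, and what remains is algebraic bookkeeping. The only points needing a little care are the sign analysis of the two coefficients in the one-step inequality (both controlled by the stated constraints on $\lambda$ and $\eta_t$, together with $1-\eta_t\rho>0$) and the correct normalization when converting the $\tfrac{\eta_t}{1-\eta_t\rho}$-weighted sum into the expectation over the randomly chosen index $t^*$ — exactly as in the proof of \Cref{thm:convergence}.
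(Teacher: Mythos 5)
Your proposal is correct and follows essentially the same route as the paper's proof: a one-step descent inequality for $F^\Phi_\lambda$ obtained from \Cref{lem:3pt2}, telescoping with the tower rule, the bound $\E{F^\Phi_\lambda(x_{T+1})}\ge\min F$, and conversion of the $\tfrac{\eta_t}{1-\eta_t\rho}$-weighted sum into $\E{D_\Phi(\hat x_{t^*},x_{t^*})}$ via the sampling distribution of $t^*$. Your coefficient simplification and sign checks match the paper's computation exactly, so no gap remains.
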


\begin{proof}
Using \Cref{lem:3pt2}, we obtain
\begin{align*}
\Et{F_\lambda^\Phi (x_{t+1})}
&\le \Et{ f(\hat x_t) + \frac{1}{\lambda} D_\Phi (\hat x_t, x_{t+1}) } \\
&\le \Et { f(\hat x_t) + \frac{1}{\lambda(1-\eta_t\rho)} \left( 
 \left( 1 + \eta_t \tau - \eta_t/\lambda\right) D_\Phi (\hat x_t, x_t)  +\frac{1}{4}\cdot \frac{(\sigma \eta_t)^2}{1-  \eta_t(M+ 1/\lambda )}
\right) } \\
& = F_\lambda^\Phi(x_t) + \frac{\eta_t}{\lambda} \left(\frac{\tau + \rho- 1/\lambda}{1-\eta_t\rho} \right) D_\Phi (\hat x_t, x_t)  +\frac{(\sigma \eta_t)^2}{4\lambda(1-  \eta_t(M+ 1/\lambda ))(1-\eta_t\rho)} 
\end{align*}
Recursing and applying the tower rule for expectations, we obtain
\[ 
\E{F_\lambda^\Phi (x_{T+1})}
\le 
F_\lambda^\Phi(x_0) + \sum_{t=0}^T \left(  \frac{\eta_t}{\lambda} \left(\frac{\tau + \rho- 1/\lambda}{1-\eta_t\rho} \right) \E{D_\Phi (\hat x_t, x_t)}  +\frac{(\sigma \eta_t)^2}{4\lambda(1-  \eta_t(M+ 1/\lambda ))(1-\eta_t\rho)}   \right)
\]
Rearranging and using the fact that $\E{F_\lambda(x_{T+1})} \ge \min F$, we obtain
\[ 
 \sum_{t=0}^T\frac{\eta_t}{\lambda} \left(\frac{1/\lambda - \tau - \rho }{1-\eta_t\rho} \right)\E{D_\Phi (\hat x_t, x_t)}
\le 
F_\lambda^\Phi(x_0) - \min F + \frac{\sigma^2}{4\lambda } \sum_{t=0}^T  \frac{ \eta_t^2 }{(1-  \eta_t(M+ 1/\lambda ))(1-\eta_t\rho)}
\]
or equivalently
\[ 
 \sum_{t=0}^T \frac{\eta_t }{1-\eta_t\rho}\E{D_\Phi (\hat x_t, x_t)}
\le 
\frac{\lambda^2(F_\lambda^\Phi(x_0) - \min F)}{1 - (\tau+\rho)\lambda } + \frac{\lambda \sigma^2}{4(1-(\tau+\rho) \lambda)} \sum_{t=0}^T  \frac{ \eta_t^2 }{(1-  \eta_t(M+ 1/\lambda ))(1-\eta_t\rho)}. 
\]
Dividing through by $ \sum_{t=0}^T \frac{\eta_t }{1-\eta_t\rho}$ and recognizing the left-hand-side as $\EE[D_{\Phi}(\hat x_{t^*},x_{t^*})]$, the result follows.
\end{proof}

\noindent
As an immediate corollary, we obtain a convergence rate for Algorithm~\ref{alg:stoc_prox_linear} with a constant stepsize. 

\begin{corollary} \label{cor:convergence_const_smooth}
For some $\alpha>0$, set $\eta_t=\frac{1}{M+\lambda^{-1}+\alpha^{-1}\sqrt{T+1}}$ for all indices $t=1,\ldots, T$. Then the point $x_{t^*}$ returned by Algorithm~\ref{alg:stoc_prox_linear} satisfies:
\[ 
\E{ D_{\Phi}\left(\prox_{\lambda F}^\Phi (x_{t^*}),x_{t^*}\right) }
\le 
\frac{\lambda^2(F^\Phi_\lambda(x_0) - \min F)+\lambda(\frac{\sigma\alpha}{2})^2}{(1-(\tau+\rho)\lambda)}\cdot\left(\frac{M+\lambda^{-1}-\rho}{T+1}+\frac{1}{\alpha\sqrt{T+1}}\right).
\]
\end{corollary}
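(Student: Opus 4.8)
The plan is to specialize \Cref{thm:convergence2} to the constant stepsize $\eta_t \equiv \eta := \frac{1}{M+\lambda^{-1}+\alpha^{-1}\sqrt{T+1}}$ and then simplify the resulting bound. First I would verify that this choice of $\eta$ is admissible, i.e. that $\eta \in \left(0, \frac{\lambda}{1+\lambda M}\right)$ as required by Algorithm~\ref{alg:stoc_prox_linear}; indeed $\eta < \frac{1}{M+\lambda^{-1}} = \frac{\lambda}{1+\lambda M}$ since the extra term $\alpha^{-1}\sqrt{T+1}$ in the denominator is strictly positive. I would also note that with a constant stepsize the sampling distribution for $t^*$ in Algorithm~\ref{alg:stoc_prox_linear} becomes uniform on $\{0,\ldots,T\}$.

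Next, with $\eta_t \equiv \eta$, the two sums appearing in \Cref{thm:convergence2} collapse: $\sum_{t=0}^T \frac{\eta_t}{1-\eta_t\rho} = \frac{(T+1)\eta}{1-\eta\rho}$ and $\sum_{t=0}^T \frac{\eta_t^2}{(1-\eta_t(M+1/\lambda))(1-\eta_t\rho)} = \frac{(T+1)\eta^2}{(1-\eta(M+1/\lambda))(1-\eta\rho)}$. Plugging these into the theorem and cancelling the common factor $\frac{(T+1)\eta}{1-\eta\rho}$ from numerator and denominator of both terms, the bound becomes
\[
\E{D_\Phi(\prox_{\lambda F}^\Phi(x_{t^*}),x_{t^*})} \le \frac{\lambda}{1-(\tau+\rho)\lambda}\left(\frac{\lambda(F^\Phi_\lambda(x_0)-\min F)}{(T+1)\eta} + \frac{\sigma^2 \eta}{4(1-\eta(M+1/\lambda))}\right).
\]
The key algebraic observation is that $1 - \eta(M+\lambda^{-1}) = \frac{\alpha^{-1}\sqrt{T+1}}{M+\lambda^{-1}+\alpha^{-1}\sqrt{T+1}}$, so that $\frac{\eta}{1-\eta(M+1/\lambda)} = \frac{\alpha}{\sqrt{T+1}}$; this is exactly the cancellation that makes the second term manageable. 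Similarly $\frac{1}{\eta} = M+\lambda^{-1}+\alpha^{-1}\sqrt{T+1}$. Substituting these, the first term contributes $\frac{\lambda^2(F^\Phi_\lambda(x_0)-\min F)}{T+1}\left(M+\lambda^{-1}+\alpha^{-1}\sqrt{T+1}\right)$ and the second contributes $\frac{\lambda\sigma^2\alpha}{4\sqrt{T+1}}$. Factoring $\left(\frac{M+\lambda^{-1}}{T+1}+\frac{1}{\alpha\sqrt{T+1}}\right)$ out of both and collecting $\lambda^2(F^\Phi_\lambda(x_0)-\min F) + \lambda(\sigma\alpha/2)^2$ inside the parentheses—noting $\frac{\lambda\sigma^2\alpha}{4\sqrt{T+1}} = \lambda\left(\frac{\sigma\alpha}{2}\right)^2 \cdot \frac{1}{\alpha\sqrt{T+1}}$—yields the claimed form, after replacing $M+\lambda^{-1}$ by $M+\lambda^{-1}-\rho$, which only enlarges the bound since $\rho\ge 0$.

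I do not expect any genuine obstacle here; this is a direct computation. The only mild subtlety is bookkeeping: making sure the factor $(M+\lambda^{-1}-\rho)$ rather than $(M+\lambda^{-1})$ appears in the final statement, which is legitimate because $\frac{M+\lambda^{-1}-\rho}{T+1} \le \frac{M+\lambda^{-1}}{T+1}$ and all coefficients are nonnegative, so the displayed upper bound is valid (if slightly loose). One should also double check that the admissibility condition $\lambda < (\tau+\rho)^{-1}$, assumed in the theorem, is inherited as a hypothesis of the corollary and that $1-(\tau+\rho)\lambda > 0$ so the prefactor is well-defined and positive.
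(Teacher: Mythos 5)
Your reduction to \Cref{thm:convergence2} with the constant stepsize is the right (and the intended) route, and most of the bookkeeping is correct: the admissibility check $\eta<\frac{\lambda}{1+\lambda M}$, the collapse of the two sums, and the key cancellation $\frac{\eta}{1-\eta(M+1/\lambda)}=\frac{\alpha}{\sqrt{T+1}}$ for the variance term are all exactly what is needed. However, the final step has an inequality pointing the wrong way, so as written you only prove a statement \emph{weaker} than the corollary. After discarding the factor $(1-\eta\rho)\le 1$ in the first term, what you obtain is a bound featuring $\frac{M+\lambda^{-1}}{T+1}+\frac{1}{\alpha\sqrt{T+1}}$; the corollary asserts the bound with the \emph{smaller} factor $\frac{M+\lambda^{-1}-\rho}{T+1}+\frac{1}{\alpha\sqrt{T+1}}$. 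The observation that $\frac{M+\lambda^{-1}-\rho}{T+1}\le\frac{M+\lambda^{-1}}{T+1}$ shows the corollary's bound is \emph{tighter} than the one you derived, so it cannot be deduced from yours --- ``replacing $M+\lambda^{-1}$ by $M+\lambda^{-1}-\rho$'' shrinks the right-hand side rather than enlarging it.

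The fix is to not drop $(1-\eta\rho)$ at all: keep the first term of \Cref{thm:convergence2} as $\lambda(F^\Phi_\lambda(x_0)-\min F)\cdot\frac{1-\eta\rho}{(T+1)\eta}$ and use the exact identity
\begin{equation*}
\frac{1-\eta\rho}{\eta}=\frac{1}{\eta}-\rho=M+\lambda^{-1}-\rho+\alpha^{-1}\sqrt{T+1},
\end{equation*}
which yields precisely $\lambda(F^\Phi_\lambda(x_0)-\min F)\left(\frac{M+\lambda^{-1}-\rho}{T+1}+\frac{1}{\alpha\sqrt{T+1}}\right)$. For the variance term, your computation gives $\left(\frac{\sigma\alpha}{2}\right)^2\frac{1}{\alpha\sqrt{T+1}}$, and since $\lambda<(\tau+\rho)^{-1}$ forces $\lambda^{-1}-\rho>0$, hence $M+\lambda^{-1}-\rho>0$, this is at most $\left(\frac{\sigma\alpha}{2}\right)^2\left(\frac{M+\lambda^{-1}-\rho}{T+1}+\frac{1}{\alpha\sqrt{T+1}}\right)$. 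Factoring the common parenthesis and multiplying by $\frac{\lambda}{1-(\tau+\rho)\lambda}$ then gives exactly the stated bound. With this one correction your argument is complete.
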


\section{Rates in function value for convex problems}\label{sec:convexity}
In this final section, we examine convergence rates for stochastic model based minimization under convexity assumptions and prove rates of converge on function values. To this end, we will use the following definition from \cite{rel_smooth_freund}. A function $g\colon\R^d\to\R\cup\{\infty\}$ is {\em $\mu$-strongly convex relative to} $\Phi$ if the function $g-\mu \Phi$ is convex. Notice that $\mu=0$ corresponds to plain convexity of $g$.

In this section, we make the following assumptions: 
\begin{enumerate}[label=(C\arabic*)]
\item \textbf{(Sampling)} It is possible to generate i.i.d. realizations $\xi_1, \ldots, \xi_T \sim P$\label{assum:sampling_convex}

\item \textbf{(One-sided accuracy)} \label{assum:one_sided_convex}
There is a measurable function $(x,y,\xi) \mapsto f_x(y,\xi)$ defined on $U \times U \times \Omega$ satisfying both 
\[ \Exi{f_x(x, \xi) } = f(x), \qquad \forall x \in U  \cap \dom r\]
and
\begin{equation}\label{eqnupper_bound_model_convex}
 \Exi{f_x(y, \xi) }\le f(y), \qquad \forall x,y\in U  \cap \dom r. 
\end{equation}

\item\label{it:convex_model_convex} \textbf{(Convexity of the models)} The exists some $\mu\geq 0$ such that the functions $f_x(\cdot, \xi) + r(\cdot)$ are $\mu$-strongly convex relative to $\Phi$ for all $x\in U\cap \dom r$ and a.e. $\xi \in \Omega$. 

\item \textbf{(Lipschitz property)} \label{assum:lip_convex}
There exists a square integrable function $L \colon \Omega\to \R_+$ such that for all $x,y\in U\cap\dom r$, the following inequalities holds:
\begin{align}
f_x(x, \xi) - f_x(y, \xi)  &\le  L(\xi)\sqrt{D_{\Phi}(y,x)}, \label{eqn:lip_funcmodel_convex}\\
\sqrt{\EE_{\xi}\left[L(\xi)^2\right]} &\leq \lipsymb \nonumber.
\end{align}
\item\label{it:solvability_convex} \textbf{(Solvability)} The convex problems 
$$\min_y\left\{F(y)+\frac{1}{\lambda}D_{\Phi}(y,x)\right\} \qquad \textrm{and}\qquad~\min_y\left\{f_x(y, \xi)+r(y)+\frac{1}{\lambda}D_{\Phi}(y,x)\right\},$$
admit a minimizer for any $\lambda>0$, any $x\in U$, and a.e. $\xi\in \Omega$. The minimizers vary measurably in $(x,\xi)\in  U\times \Omega$.
\end{enumerate}

Thus  the only difference between assumptions \ref{assum:sampling_convex}-\ref{it:solvability_convex} and  \ref{assum:sampling}-(A5) is that
in expectation the stochastic models $f(\cdot,\xi)$ are global under-estimators  \ref{assum:one_sided_convex} and the functions $f(\cdot,\xi)+r(\cdot)$ are relatively strongly convex, instead of  weakly convex \ref{it:convex_model_convex}. Note that  under assumptions \ref{assum:sampling_convex}-\ref{it:solvability_convex}, the objective function $F$ is $\mu$-strongly convex relative to $\Phi$; the argument is completely analogous to that of Lemma~\ref{lem:weak_conv}.

Henceforth, we let $\{x_t\}_{t\geq 0}$ be the iterates generated by Algorithm~\ref{alg:stoc_prox} (with $\tau=\rho=0$) and let $\{\xi_t\}_{t\geq 0}$ be the corresponding samples used. For each index $t\geq 0$, we continue to use the notation $\hat x_t=\prox_{\lambda F}^\Phi (x)$ and let 
 $\mathbb{E}_{t}[\cdot]$ to denote the expectation conditioned on all the realizations $\xi_0,\xi_1,\ldots, \xi_{t-1}$.
We need the following key lemma, which identifies the Bregman divergence $D_{\Phi}(x^*,x_t)$, between the iterates and an optimal solution, as a useful potential function. Notice that this is in contrast to the nonconvex setting, where it was the envelope $F^{\Phi}_{\lambda}(x_t)$ that served as an appropriate potential function.

\begin{lemma} \label{lem:strong_convexity}
For each iteration $t\geq 0$, the iterates of Algorithm~\ref{alg:stoc_prox} satisfy
\begin{align*}
\Et{ (1+\eta_t\mu)D_\Phi (x^\ast, x_{t+1}) }
 &\le
 D_\Phi (x^\ast, x_t)  +\frac{(\lipsymb\eta_t)^2}{4} + \eta_t\Et{r(x_{t}) - r(x_{t+1})}-  \eta_t( F(x_t) - F(x^\ast)),
\end{align*}
where $x^*$ is any minimizer of $F$.
\end{lemma}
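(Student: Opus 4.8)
The plan is to mimic the proof of \Cref{lem:3pt}, but to use the three point inequality with the optimal solution $x^\ast$ in place of the Bregman-proximal point $\hat x_t$, and to exploit relative strong convexity at two places: when invoking the three point inequality for the model subproblem, and when lower-bounding $f$ at $x^\ast$ using a model centered at $x_t$. Since $\tau=\rho=0$, the subproblem in Algorithm~\ref{alg:stoc_prox} is $x_{t+1}=\argmin_x\{f_{x_t}(x,\xi_t)+r(x)+\tfrac{1}{\eta_t}D_\Phi(x,x_t)\}$; by \ref{it:convex_model_convex} the function $g=f_{x_t}(\cdot,\xi_t)+r(\cdot)$ is $\mu$-strongly convex relative to $\Phi$, so $g+\tfrac{1}{\eta_t}D_\Phi(\cdot,x_t)$ behaves like a problem with Bregman divergence $(\tfrac{1}{\eta_t}+\mu)D_\Phi(\cdot,x_t)$ plus the convex part $f_{x_t}(\cdot,\xi_t)+r(\cdot)-\mu\Phi$. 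Applying \Cref{lem:threepoint} in this form at the point $x=x^\ast$ yields
\[
f_{x_t}(x^\ast,\xi_t)+r(x^\ast)+\tfrac1{\eta_t}D_\Phi(x^\ast,x_t)\ \ge\ f_{x_t}(x_{t+1},\xi_t)+r(x_{t+1})+\tfrac1{\eta_t}D_\Phi(x_{t+1},x_t)+\big(\tfrac1{\eta_t}+\mu\big)D_\Phi(x^\ast,x_{t+1}).
\]

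Next I would take $\Et{\cdot}$ and bound the resulting left-hand side. For the model terms I use the Lipschitz property \ref{assum:lip_convex}: $f_{x_t}(x^\ast,\xi_t)-f_{x_t}(x_{t+1},\xi_t)\le f_{x_t}(x^\ast,\xi_t)-f_{x_t}(x_t,\xi_t)+L(\xi_t)\sqrt{D_\Phi(x_{t+1},x_t)}$. Then by the unbiasedness and the under-estimation property \ref{assum:one_sided_convex}, $\Et{f_{x_t}(x^\ast,\xi_t)-f_{x_t}(x_t,\xi_t)}\le f(x^\ast)-f(x_t)$. Combining with the $r$-terms, and writing $F=f+r$, the rearranged inequality reads, after multiplying through by $\eta_t$,
\[
\Et{(1+\eta_t\mu)D_\Phi(x^\ast,x_{t+1})+D_\Phi(x_{t+1},x_t)}\le D_\Phi(x^\ast,x_t)-\eta_t(F(x_t)-F(x^\ast))+\eta_t\Et{r(x_t)-r(x_{t+1})}+\eta_t\Et{L(\xi_t)\sqrt{D_\Phi(x_{t+1},x_t)}}.
\]
Finally, I set $\gamma=\sqrt{\Et{D_\Phi(x_{t+1},x_t)}}$, use Cauchy-Schwarz together with $\sqrt{\EE_\xi[L(\xi)^2]}\le\lipsymb$ to bound $\Et{L(\xi_t)\sqrt{D_\Phi(x_{t+1},x_t)}}\le\lipsymb\gamma$, drop the nonnegative $\Et{D_\Phi(x_{t+1},x_t)}=\gamma^2$ term on the left but keep a $-\gamma^2$ on the right (equivalently, move it over), and maximize $\eta_t\lipsymb\gamma-\gamma^2$ over $\gamma$, attaining the bound $(\lipsymb\eta_t)^2/4$ at $\gamma=\lipsymb\eta_t/2$. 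This gives exactly the claimed inequality.

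The only subtle point, which I expect to be the main obstacle, is justifying the use of \Cref{lem:threepoint} with the ``inflated'' divergence $(\tfrac1{\eta_t}+\mu)D_\Phi(\cdot,x_t)$: one must check that $\Phi$ itself, not merely $D_\Phi$, appears so that essential smoothness and the hypotheses $\ri(\dom g')\subseteq\inter(\dom\Phi)$ of the lemma are met for the genuinely convex part $g'=f_{x_t}(\cdot,\xi_t)+r(\cdot)-\mu\Phi$. This follows from the compatibility condition \eqref{eqn:compatibility} and from \ref{it:solvability_convex}, which guarantees $x_{t+1}$ is a well-defined point in $U$. A secondary care point is measurability, so that the conditional expectations are legitimate; this is exactly what the last sentence of \ref{it:solvability_convex} provides. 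Everything else is the same algebra as in \Cref{lem:3pt}, with the decisive sign improvement that relative strong convexity contributes the extra factor $(1+\eta_t\mu)$ in front of $D_\Phi(x^\ast,x_{t+1})$.
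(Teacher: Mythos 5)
Your argument is correct and is essentially the paper's proof: apply the three point inequality at $x=x^\ast$ with the relative strong convexity absorbed so that the divergence carries the factor $(1+\eta_t\mu)$, bound the model gap via \ref{assum:one_sided_convex} and \ref{assum:lip_convex}, and finish with Cauchy--Schwarz and the maximization of $\eta_t\lipsymb\gamma-\gamma^2$. The only cosmetic point is that the convex part in the three point inequality is most cleanly taken as $f_{x_t}(\cdot,\xi_t)+r(\cdot)-\mu D_\Phi(\cdot,x_t)$ rather than $f_{x_t}(\cdot,\xi_t)+r(\cdot)-\mu\Phi$ (they differ only by an affine term, so your displayed inequality is exactly right).
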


\begin{proof}
Appealing to the three point inequality in \Cref{lem:threepoint} and \ref{it:convex_model_convex}, we deduce that all points $x\in \dom r$ satisfy
\begin{equation} \label{eq:3pt_strong_convex}
 f_{x_t}(x, \xi_t) + r(x) + \frac{1}{\eta_t} D_\Phi (x, x_t) 
 \ge f_{x_t}(x_{t+1}, \xi_t)  + r(x_{t+1}) + \frac{1}{\eta_t} D_\Phi (x_{t+1}, x_t) +  \frac{(1+\eta_t\mu)}{\eta_t} D_\Phi (x, x_{t+1}) .
\end{equation}
Setting $x = x^\ast$, rearranging terms, and taking expectations, we deduce
\begin{align} \label{eq:init_bound2_strong_convex}
 &\frac{1}{\eta_t} \Et{ (1+\eta_t\mu) D_\Phi (x^\ast, x_{t+1}) - D_\Phi (x^\ast, x_{t}) + D_\Phi (x_{t+1}, x_{t}) }\nonumber\\
 &\hspace{20pt}\le
 \Et{ f_{x_t}(x^\ast, \xi_t) + r(x^\ast)-  f_{x_t}(x_{t+1}, \xi_t) - r(x_{t+1}) } .
 \end{align}
We seek to upper bound the right-hand-side of \eqref{eq:init_bound2_strong_convex}. Assumptions \ref{assum:one_sided_convex} and \ref{assum:lip_convex} imply:
\begin{align*}
 \Et{  f_{x_t} (x^\ast , \xi_t) - f_{x_t}( x_{t+1}, \xi_t)  } &\le \Et{  f_{x_t} (x^\ast , \xi_t) - f_{x_t}( x_{t}, \xi_t)  + L(\xi) \sqrt{ D_{\Phi}(x_{t+1},x_t)} } \\
 &= \Et{  f_{x_t} (x^\ast , \xi_t) - f(x^\ast)}   + \Et { L(\xi)\sqrt{ D_{\Phi}(x_{t+1},x_t)} } - f(x_t) + f(x^\ast) \\
 &\le\Et { L(\xi)\sqrt{ D_{\Phi}(x_{t+1},x_t)} } - f(x_t) + f(x^\ast).
\end{align*}
The left hand side of \eqref{eq:init_bound2_strong_convex} is therefore upper bounded by
\begin{align*}
& \Et {  L(\xi) \sqrt{ D_{\Phi}(x_{t+1},x_t)}  - f(x_t) - r(x_{t+1} )}+ f(x^\ast) + r(x^\ast) \\
 &=  \Et {  L(\xi) \sqrt{ D_{\Phi}(x_{t+1},x_t)} + (r(x_{t}) - r(x_{t+1})) } - ( F(x_t) - F(x^\ast)).
\end{align*}
Putting everything together, we arrive at
\begin{align*}
&\frac{1}{\eta_t} \Et{ (1+\eta_t\mu)D_\Phi (x^\ast, x_{t+1}) - D_\Phi (x^\ast, x_{t}) + D_\Phi (x_{t+1}, x_{t}) }\\
 &\hspace{20pt}\le
 \Et {  L(\xi) \sqrt{ D_{\Phi}(x_{t+1},x_t)}+(r(x_{t}) - r(x_{t+1})) } -( F(x_t) - F(x^\ast))
 \end{align*}
  Multiplying through by $\eta_t$ and rearranging yields
 \begin{align*}
  \Et{ (1+\eta_t\mu)D_\Phi (x^\ast, x_{t+1}) } &\le
 D_\Phi (x^\ast, x_t)  +   \Et { \eta_t L(\xi) \sqrt{ D_{\Phi}(x_{t+1},x_t)} -{ D_\Phi (x_{t+1}, x_{t}) }}\\
&\hspace{40pt} + \eta_t\Et{r(x_{t}) - r(x_{t+1})} - \eta_t( F(x_t) - F(x^\ast)).
 \end{align*}
Now define $\gamma := \sqrt{\Et {   D_{\Phi}(x_{t+1},x_t)}}$. 
By Cauchy-Schwarz, we have that $\Et { \eta_t L(\xi) \sqrt{ D_{\Phi}(x_{t+1},x_t)}} \leq \eta_t \lipsymb\gamma $. Thus we obtain
\begin{align*}
 \Et{ (1+\eta_t\mu)D_\Phi (x^\ast, x_{t+1}) }
 &\le
 D_\Phi (x^\ast, x_t)  +   \eta_t\lipsymb \gamma -  \gamma^2 +  \eta_t\Et{r(x_{t}) - r(x_{t+1})} - \eta_t( F(x_t) - F(x^\ast)) \\
&\le
 D_\Phi (x^\ast, x_t)  +\frac{(\lipsymb\eta_t)^2}{4} + \eta_t\Et{r(x_{t}) - r(x_{t+1})}-  \eta_t( F(x_t) - F(x^\ast)),
\end{align*}
where the last inequality follows by maximizing the right-hand-side in $\gamma$.
\end{proof}

\noindent

We are now ready to prove convergence guarantees in the case that $\mu = 0$.

\begin{theorem}[Convergence rate under convexity] \label{thm:convergenceC_nonstrong}
For all $T > 0$, we have
\begin{align*}
\EE\left[F\left(\tfrac{1}{\sum_{t=0}^T \eta_t }\sum_{t=0}^T\eta_t x_t\right) - F(x^\ast)\right] \leq \frac{D_\Phi(x^\ast, x_0) + \sum_{t=0}^t \frac{(\eta_t \lipsymb)^2}{4} + \eta_0(r(x_0) - \inf r) }{\sum_{t=0}^T \eta_t},
\end{align*}
where $x^*$ is any minimizer of $F$.
\end{theorem}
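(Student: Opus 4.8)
The plan is to specialize Lemma~\ref{lem:strong_convexity} to $\mu=0$, telescope, and finish with Jensen's inequality. First I would rearrange the per-iteration bound to isolate the functional gap:
\[
\eta_t\bigl(F(x_t)-F(x^\ast)\bigr)\ \le\ D_\Phi(x^\ast,x_t)-\Et{D_\Phi(x^\ast,x_{t+1})}+\tfrac{(\lipsymb\eta_t)^2}{4}+\eta_t\Et{r(x_t)-r(x_{t+1})}.
\]
Applying the tower rule for expectations and summing over $t=0,\dots,T$, the Bregman-divergence terms telescope to $D_\Phi(x^\ast,x_0)-\E{D_\Phi(x^\ast,x_{T+1})}$, and since $D_\Phi\ge 0$ the trailing term may be dropped.

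Next I would control the residual sum $\sum_{t=0}^T\eta_t\bigl(r(x_t)-r(x_{t+1})\bigr)$. This is not a plain telescope, but because $\{\eta_t\}$ is nonincreasing (as demanded by Algorithm~\ref{alg:stoc_prox}) and $r(x_t)\ge\inf r$ pathwise, an Abel summation — exactly the content of the auxiliary Lemma~\ref{lem:aux-lem} already used in the proof of Theorem~\ref{thm:convergence} — bounds this sum by $\eta_0\bigl(r(x_0)-\inf r\bigr)$ for every realization, hence the same bound holds in expectation. Combining with the previous step gives
\[
\sum_{t=0}^T\eta_t\,\E{F(x_t)-F(x^\ast)}\ \le\ D_\Phi(x^\ast,x_0)+\sum_{t=0}^T\tfrac{(\lipsymb\eta_t)^2}{4}+\eta_0\bigl(r(x_0)-\inf r\bigr).
\]

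Finally I would invoke convexity of $F$. Under \ref{assum:sampling_convex}--\ref{it:solvability_convex} the objective $F$ is $0$-strongly convex relative to $\Phi$, i.e.\ convex (by the analogue of Lemma~\ref{lem:weak_conv} noted just after the assumptions), and the iterates satisfy $x_t\in U\cap\dom r\subseteq\dom F$ with $\dom r$ convex; hence the weighted average $\bar x_T:=\bigl(\sum_{t=0}^T\eta_t\bigr)^{-1}\sum_{t=0}^T\eta_t x_t$ lies in $\dom F$ and, pathwise, $F(\bar x_T)-F(x^\ast)\le\bigl(\sum_{t=0}^T\eta_t\bigr)^{-1}\sum_{t=0}^T\eta_t\bigl(F(x_t)-F(x^\ast)\bigr)$ by Jensen. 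Taking expectations, substituting the displayed bound, and dividing by $\sum_{t=0}^T\eta_t$ yields precisely the claimed estimate (the index $\sum_{t=0}^t$ in the statement being an evident typo for $\sum_{t=0}^T$).

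I do not expect a real obstacle here: once Lemma~\ref{lem:strong_convexity} is available, every step is routine. The only point that needs a little care is the sum of $r$-increments, which must be handled via monotonicity of the stepsizes rather than naive telescoping — but this is exactly what Lemma~\ref{lem:aux-lem} packages — together with the bookkeeping of passing from conditional to total expectations before summing.
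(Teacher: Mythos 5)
Your proposal is correct and follows essentially the same route as the paper's own proof: specialize Lemma~\ref{lem:strong_convexity} with $\mu=0$, take total expectations and sum so the Bregman terms telescope (dropping the nonnegative trailing term), bound $\sum_t \eta_t\bigl(r(x_t)-r(x_{t+1})\bigr)$ by $\eta_0\bigl(r(x_0)-\inf r\bigr)$ via Lemma~\ref{lem:aux-lem}, and finish with Jensen's inequality applied to the convex function $F$. Your observation that $\sum_{t=0}^{t}$ in the statement should read $\sum_{t=0}^{T}$ is also correct.
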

\begin{proof}
Lower-bounding the left-hand-side of \Cref{lem:strong_convexity} by zero $D_\Phi (x^\ast, x_{t+1})$, we deduce
\begin{align*}
\eta_t\left[F(x_t) - F(x^\ast) \right] \leq\frac{(\lipsymb\eta_t)^2}{4} + 
 \eta_t\EE_t\left[r(x_{t}) - r(x_{t+1})\right] + \EE_t[D_\Phi (x^\ast, x_t) -  D_\Phi (x^\ast, x_{t+1}) ]
\end{align*}
Applying the tower rule for expectations yields
\begin{align*}
&\sum_{t=0}^T\eta_t\EE\left[F(x_t) - F(x^\ast)\right] \\
&~~\leq \sum_{t=0}^T \frac{(\eta_t \lipsymb)^2}{4} + \EE\left[\sum_{t=0}^T\eta_t(r(x_t) -r(x_{t+1}))\right]+\EE\left[\sum_{t=0}^T( D_\Phi (x^\ast, x_t) -  D_\Phi (x^\ast, x_{t+1}))\right].
\end{align*}
Using Jensen's inequality, telescoping and using the auxiliary Lemma~\ref{lem:aux-lem},
we conclude
\begin{align*}
\EE\left[F\left(\frac{1}{\sum_{t=0}^T}\sum_{t=0}^T \eta_tx_t\right) - F(x^\ast)\right] &\leq \frac{D_\Phi(x^\ast, x_0) + \sum_{t=0}^t \frac{(\eta_t \lipsymb)^2}{4} + \eta_0(r(x_0) - \inf r) }{\sum_{t=0}^T \eta_t},
\end{align*}
as claimed.
\end{proof}

As an immediate corollary of \Cref{thm:convergence}, we have the following rate of convergence when the stepsize $\eta_t$ is constant. 
\begin{corollary}[Convergence rate under convexity for constant stepsize] For any $\alpha>0$ and corresponding constant stepsize $\eta_t = \frac{\alpha}{\sqrt{T+1}}$, we have
\begin{align*}
\EE\left[F\left(\frac{1}{T+1}\sum_{t=0}^T x_t\right) - F(x^\ast)\right] \leq \frac{D_{\Phi}(x^\ast, x_0) + \frac{(\alpha\lipsymb )^2}{4} + \alpha( r(x_0) - \inf r)}{\alpha\sqrt{T+1}},
\end{align*}
where $x^*$ is any minimizer of $F$.
\end{corollary}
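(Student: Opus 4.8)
The plan is to simply specialize \Cref{thm:convergenceC_nonstrong} to the constant stepsize $\eta_t \equiv \alpha/\sqrt{T+1}$; despite the stated attribution to \Cref{thm:convergence}, the bound we want is a direct arithmetic consequence of the convex-case rate. First I would substitute $\eta_t = \alpha/\sqrt{T+1}$ into each of the three ingredients of the bound in \Cref{thm:convergenceC_nonstrong}. Since all $T+1$ stepsizes are equal, the normalization becomes $\sum_{t=0}^T \eta_t = (T+1)\cdot\tfrac{\alpha}{\sqrt{T+1}} = \alpha\sqrt{T+1}$, the noise term becomes $\sum_{t=0}^T \tfrac{(\eta_t\lipsymb)^2}{4} = (T+1)\cdot\tfrac{\alpha^2\lipsymb^2}{4(T+1)} = \tfrac{(\alpha\lipsymb)^2}{4}$, and $\eta_0 = \tfrac{\alpha}{\sqrt{T+1}}$.

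Second, I would note that the output point $\tfrac{1}{\sum_{t=0}^T\eta_t}\sum_{t=0}^T\eta_t x_t$ collapses to the uniform average $\tfrac{1}{T+1}\sum_{t=0}^T x_t$, because the weights $\eta_t$ are identical, so the convex combination in \Cref{thm:convergenceC_nonstrong} is exactly the one appearing in the corollary's statement.

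Third, dividing the numerator through by $\sum_{t=0}^T\eta_t = \alpha\sqrt{T+1}$ yields the exact bound
\[
\EE\left[F\left(\tfrac{1}{T+1}\sum_{t=0}^T x_t\right) - F(x^\ast)\right] \le \frac{D_\Phi(x^\ast, x_0) + \tfrac{(\alpha\lipsymb)^2}{4} + \tfrac{\alpha}{\sqrt{T+1}}\,(r(x_0) - \inf r)}{\alpha\sqrt{T+1}},
\]
and finally, since $T \ge 0$ gives $\sqrt{T+1}\ge 1$ and hence $\tfrac{\alpha}{\sqrt{T+1}} \le \alpha$, I would replace $\tfrac{\alpha}{\sqrt{T+1}}$ by $\alpha$ in the last summand to obtain the (slightly looser) stated inequality. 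There is no real obstacle here: the only point worth flagging is that this last relaxation $\eta_0 \le \alpha$ is exactly what accounts for the small gap between the literal specialization and the form written in the corollary.
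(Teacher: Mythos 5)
Your proposal is correct and is exactly the paper's (implicit) argument: the corollary is obtained by substituting the constant stepsize $\eta_t=\alpha/\sqrt{T+1}$ into Theorem~\ref{thm:convergenceC_nonstrong}, noting the weighted average collapses to the uniform one, and relaxing $\eta_0=\alpha/\sqrt{T+1}\le\alpha$ in the $r(x_0)-\inf r$ term. You also rightly observe that the in-text attribution to Theorem~\ref{thm:convergence} is a typo for Theorem~\ref{thm:convergenceC_nonstrong}.
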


\bigskip 

The final result of this section proves that Algorithm~\ref{alg:stoc_prox}, with an appropriate choice of stepsize, drives the expected error in function values to zero at the rate $\widetilde O(\frac{1}{k})$, whenever $\mu > 0$.

\begin{theorem}[Convergence rate strongly convex case] \label{thm:convergenceC}
Suppose that $\eta_t = \tfrac{1}{\mu(t+1)}$ for all $t \geq 0$. Then for all $T > 0$, we have
\begin{align*}
\EE\left[F\left(\frac{1}{T+1}\sum_{t=0}^T x_t\right) - F(x^\ast) + \mu D_\Phi (x^\ast, x_{T+1})\right]  \leq \frac{ \frac{\lipsymb^2(1 + \log(T+1))}{4\mu} + r(x_0) - \inf r + \mu D_{\Phi}(x^\ast, x_0)}{T+1}
\end{align*}
where $x^*$ is any minimizer of $F$.
\end{theorem}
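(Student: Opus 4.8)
The plan is to re-run the potential-function argument behind \Cref{lem:strong_convexity}, specialized to $\tau=\rho=0$, with the harmonically decaying stepsize $\eta_t=\tfrac{1}{\mu(t+1)}$, and then telescope. First I would substitute this stepsize into \Cref{lem:strong_convexity}, noting $1+\eta_t\mu=\tfrac{t+2}{t+1}$, move the term $\eta_t(F(x_t)-F(x^\ast))$ to the left-hand side, and multiply through by $\mu(t+1)=\eta_t^{-1}$. Since $\mu(t+1)(1+\eta_t\mu)=\mu(t+2)$, $\mu(t+1)\cdot\tfrac{(\lipsymb\eta_t)^2}{4}=\tfrac{\lipsymb^2}{4\mu(t+1)}$, and $\mu(t+1)\eta_t=1$, the bound becomes
\[
\mu(t+2)\,\Et{D_\Phi(x^\ast,x_{t+1})}+\big(F(x_t)-F(x^\ast)\big)\;\le\;\mu(t+1)\,D_\Phi(x^\ast,x_t)+\tfrac{\lipsymb^2}{4\mu(t+1)}+\Et{r(x_t)-r(x_{t+1})}.
\]
The key structural point is that the coefficient $\mu(t+2)$ multiplying $D_\Phi(x^\ast,x_{t+1})$ on the left at step $t$ is exactly $\mu((t+1)+1)$, which is the coefficient of $D_\Phi(x^\ast,x_{t+1})$ appearing on the right at step $t+1$. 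Hence, taking full expectations via the tower rule and summing over $t=0,\dots,T$, the Bregman terms telescope, leaving
\[
\sum_{t=0}^{T}\E{F(x_t)-F(x^\ast)}+\mu(T+2)\,\E{D_\Phi(x^\ast,x_{T+1})}\;\le\;\mu\, D_\Phi(x^\ast,x_0)+\sum_{t=0}^{T}\tfrac{\lipsymb^2}{4\mu(t+1)}+\sum_{t=0}^{T}\E{r(x_t)-r(x_{t+1})},
\]
where I used that $x_0$ is deterministic, so $\E{D_\Phi(x^\ast,x_0)}=D_\Phi(x^\ast,x_0)$.

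Second, I would bound the two accumulated error sums. The noise terms form a harmonic sum: $\sum_{t=0}^{T}\tfrac{\lipsymb^2}{4\mu(t+1)}=\tfrac{\lipsymb^2}{4\mu}\sum_{k=1}^{T+1}\tfrac1k\le\tfrac{\lipsymb^2}{4\mu}\big(1+\log(T+1)\big)$. The regularizer terms, which after the rescaling carry unit weight (this is precisely why $\eta_t\propto 1/(t+1)$ is convenient), telescope directly: $\sum_{t=0}^{T}\E{r(x_t)-r(x_{t+1})}=\E{r(x_0)-r(x_{T+1})}\le r(x_0)-\inf r$. Using also $\mu(T+2)\ge\mu(T+1)$ to lower-bound the surviving Bregman term, and then dividing the whole inequality by $T+1$, gives
\[
\tfrac{1}{T+1}\sum_{t=0}^{T}\E{F(x_t)-F(x^\ast)}+\mu\,\E{D_\Phi(x^\ast,x_{T+1})}\;\le\;\frac{\mu D_\Phi(x^\ast,x_0)+\tfrac{\lipsymb^2(1+\log(T+1))}{4\mu}+r(x_0)-\inf r}{T+1}.
\]
Finally, since $\mu\ge 0$ the function $F$ is convex (it is the sum of the convex functions $F-\mu\Phi$ and $\mu\Phi$), so Jensen's inequality yields $F\big(\tfrac{1}{T+1}\sum_{t=0}^{T}x_t\big)\le\tfrac{1}{T+1}\sum_{t=0}^{T}F(x_t)$; combining the two expectations on the left by linearity then produces exactly the claimed bound.

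The argument involves no genuine obstacle beyond careful bookkeeping; the one place to check with care is the telescoping of the Bregman terms — that after rescaling by $\mu(t+1)$ the weight $1+\eta_t\mu$ on $D_\Phi(x^\ast,x_{t+1})$ becomes exactly $\mu(t+2)$, matching the weight appearing at the next index, and that the slack $\mu(T+2)\ge\mu(T+1)$ suffices to keep $\mu D_\Phi(x^\ast,x_{T+1})$ (with coefficient $1$) on the left after dividing by $T+1$. If a constant stepsize were used instead of $\eta_t\propto 1/t$, these Bregman terms would not collapse and one would recover only the $O(1/\sqrt{T})$ rate of \Cref{thm:convergenceC_nonstrong} rather than $\widetilde O(1/T)$.
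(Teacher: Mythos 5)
Your proposal is correct and follows essentially the same route as the paper's proof: rescale the bound of Lemma~\ref{lem:strong_convexity} by $\eta_t^{-1}=\mu(t+1)$ so that the Bregman coefficients become $\mu(t+1)$ and $\mu(t+2)$ and telescope, bound the harmonic noise sum by $\tfrac{\lipsymb^2}{4\mu}(1+\log(T+1))$, telescope the unit-weight regularizer terms using $r(x_{T+1})\geq \inf r$, and finish with Jensen's inequality and $(T+2)/(T+1)\geq 1$.
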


\begin{proof}
Using \Cref{lem:strong_convexity} and the law of total expectation, we have
\begin{align*}
\EE\left[F(x_t) - F(x^\ast) \right] \leq\frac{\eta_t\lipsymb^2}{4} + 
 \EE\left[(r(x_{t}) - r(x_{t+1})) + \frac{1}{\eta_t}D_\Phi (x^\ast, x_t) -  \frac{(1+\eta_t\mu)}{\eta_t}D_\Phi (x^\ast, x_{t+1}) \right]
\end{align*}
Setting $\eta_t = \tfrac{1}{\mu(t+1)}$, averaging, and applying Jensen's inequality yields 
\begin{align*}
\EE\left[F\left(\frac{1}{T+1}\sum_{t=0}^T x_t\right) - F(x^\ast)\right] &\leq \frac{1}{T+1}\sum_{t=0}^T \frac{\lipsymb^2}{4\mu(t+1)} + 
 \frac{\EE\left[r(x_{0}) - r(x_{T+1}) \right]}{T+1} \\
 &\hspace{20pt}+ \frac{1}{T+1}\sum_{t=0}^T\EE\left[\mu(t+1)D_\Phi (x^\ast, x_t) -  \mu(t+2)D_\Phi (x^\ast, x_{t+1}) \right] \\
 &\leq \frac{ \frac{\lipsymb^2(1 + \log(T+1))}{4\mu} + r(x_0) - \inf r + \mu D_{\Phi}(x^\ast, x_0)}{T+1} \\
 &\hspace{20pt} - \EE\left[\frac{(T+2)}{(T+1)} \mu D_\Phi (x^\ast, x_{T+1})\right],
\end{align*}
where the last inequality follows from telescoping the terms in the sum and 
using the lower bound $r(x_{T+1}) \geq \inf r$. This completes the proof. 
\end{proof}

\appendix
\section{Proofs of auxilliary results}
\subsection{Proof of Proposition~\ref{prop:bregpoly}}\label{appendix:bregpoly}
Let us write 
$$\Phi=\widehat \Phi+\widetilde{\Phi},$$
for the two functions
$$
\widehat \Phi(x) := \sum_{i=0}^n \frac{a_i}{i+2}\|x\|^{i+2}_2\qquad \textrm{and} \qquad \widetilde \Phi(x) := \sum_{i=0}^n 3a_i\|x\|^{i+2}_2.
$$
The result \cite[Equation~(25)]{Lu_mirror_weird} yields the estimate
$$
D_{\widehat \Phi}(y, x) \geq \frac{1}{2}\sum_{i=0}^n a_i\|x\|_2^i \cdot \|x - y\|_2^2\qquad\qquad \forall x,y.
$$
Thus the proof will be complete once we establish the inequality,
\begin{align}\label{eq:tildedivergence}
D_{\widetilde \Phi}(y, x) \geq \frac{1}{2}\sum_{i=0}^n a_i\|y\|_2^i \cdot \|x - y\|_2^2\qquad\qquad \forall x,y.
\end{align}
To this end, fix an index $i$, and set $\eta := 3(i+2)$ and $\widetilde \Phi_i(x):=3a_i\|x\|^{i+2}_2$. We will  show
$$
D_{\widetilde \Phi_i}(y, x) \geq \frac{a_i}{2} \|y\|^i_2 \cdot  \|x-y\|^2_2,
$$ 
which together with the identity, $D_{\tilde \Phi}(y, x) = \sum_{i=0}^n D_{\widetilde \Phi_i}(y, x) ,$
completes the proof of~\eqref{eq:tildedivergence}.

A quick computation shows that 
$$D_{\widetilde \Phi_i}(y,x)=3a_i\left(\|y\|_2^{i+2} + (i+1) \|x\|_2^{i+2} - (i+2)\|x\|_2^i\dotp{x, y} \right).$$
Let us consider two cases. First suppose that $\eta^{1/i}\|x\|_2 \geq \|y\|_2$. In this case,~\cite[Proposition~5.1]{Lu_mirror_weird} implies
$$
D_{\widetilde\Phi_i}(y, x) \geq \frac{a_i\eta}{2}\|x\|^i_2  \cdot\|x - y\|^2_2 \geq \frac{a_i}{2}\|y\|^i_2 \cdot\|x - y\|^2_2 ,
$$
as desired.

Now suppose that $\|y\|_2 \geq \eta^{1/i}\|x\|_2$. We will show that $D_{\widetilde\Phi_i}(y, x) \geq \eta^{-1}D_{\widetilde \Phi_i}(x,y)$, which will complete the proof since
$$
\eta^{-1}D_{\widetilde \Phi_i}(x,y) \geq \frac{a_i}{2} \|y\|^i \cdot  \|x-y\|^2_2,
$$
by~\cite[Proposition~5.1]{Lu_mirror_weird}. To that end, we compute
\begin{align*}
&D_{\widetilde \Phi_i}(y, x) = 3a_i\left(\|y\|_2^{i+2} + (i+1) \|x\|_2^{i+2} - (i+2)\|x\|_2^i\dotp{x, y} \right)\\
&\geq \eta^{-1}D_{\widetilde\Phi_i}(x,y) = \tfrac{a_i }{i+2}\left(\|x\|_2^{i+2} + (i+1) \|y\|_2^{i+2} - (i+2)\|y\|_2^i\dotp{x, y} \right)\\
\iff & (1-\eta^{-1}(i+1) )\|y\|_2^{i+2} + \eta^{-1}(i+2)\|y\|_2^i\dotp{x, y} \geq (\eta^{-1} - (i+1))\|x\|_2^{i+2} +  (i+2)\|x\|_2^i\dotp{x, y}\\
\impliedby &  (1-\eta^{-1}(i+1) )\|y\|_2^{i}\left(\|y\|^2 + \frac{\eta^{-1}(i+2)}{(1-\eta^{-1}(i+1) )}\dotp{x, y}\right) \geq (i+2)\|x\|_2^{i}\dotp{x, y}.
\end{align*}
Let us show that the last inequality is true: First, we upper bound the right hand side
$$
(i+2)\|x\|_2^{i}\dotp{x, y} \leq \frac{(i+2)}{\eta^{(1+i)/i}} \|y\|^{i+2}.
$$
Next, we lower bound the left hand side:  
\begin{align*}
&(1-\eta^{-1}(i+1) )\|y\|_2^{i}\left(\|y\|^2 + \frac{\eta^{-1}(i+2)}{(1-\eta^{-1}(i+1) )}\dotp{x, y}\right)  \\
&\geq(1-\eta^{-1}(i+1) )\left(1 - \frac{\eta^{-1}(i+2)}{\eta^{1/i}(1-\eta^{-1}(i+1) )}\right) \|y\|_2^{i+2}\\
&=  \left(1 - \eta^{-1} (i + 1 + \tfrac{(i+2)}{\eta^{1/i}})\right) \|y\|_2^{i+2}.
\end{align*}
Therefore, we need only verify that $\eta$ satisfies 
\begin{align*}
 \frac{(i+2)}{\eta^{(1+i)/i}} &\leq  \left(1 - \eta^{-1} (i + 1 + \frac{(i+2)}{\eta^{1/i}})\right) \\
  \iff (i+2) ~~&\leq  \eta^{(1+i)/i} - \eta^{1/i}(i + 1) -(i+2)\\
   \iff 2(i+2) &\leq  \eta^{1/i}\left(\eta - (i + 1))\right),
\end{align*}
which holds by the definition of $\eta$.  Thus the result is proved.

\subsection{An auxiliary lemma on sequences. }
\begin{lemma}\label{lem:aux-lem}
Consider any nonincreasing sequence $\{a_t\}_{t\geq 0}\subset\R_{++}$ and any sequence $\{b_t\}_{t\geq 0}\subset \R$. Then for any index  $T\in \mathbb{N}$, we have
$$\sum_{t=0}^T a_t{(b_{t}- b_{t+1})}\leq a_0 (b_0 - b^*),$$
where we set $b^*=\inf_{t\geq 0} b_t$.
\end{lemma}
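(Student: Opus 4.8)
The plan is to use Abel summation (summation by parts) to rewrite the telescoping-like sum in a form where the monotonicity of $\{a_t\}$ and the definition of $b^*$ can both be exploited. First I would expand the left-hand side by collecting terms according to each $b_t$: writing $S := \sum_{t=0}^T a_t(b_t - b_{t+1})$, I would regroup as
\[
S = a_0 b_0 + \sum_{t=1}^{T}(a_t - a_{t-1}) b_t - a_T b_{T+1}.
\]
This is the key algebraic step, and it is elementary — just shifting the index on the $b_{t+1}$ terms and collecting.

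Next I would handle the two problematic pieces. Since $\{a_t\}$ is nonincreasing, each coefficient $a_t - a_{t-1} \le 0$, so replacing $b_t$ by the lower bound $b^*$ in the sum $\sum_{t=1}^T (a_t - a_{t-1}) b_t$ only increases it (multiplying a nonpositive number by something no larger than $b_t$). Hence
\[
\sum_{t=1}^{T}(a_t - a_{t-1}) b_t \le \sum_{t=1}^{T}(a_t - a_{t-1}) b^* = (a_T - a_0) b^*.
\]
Similarly, since $a_T > 0$ and $b_{T+1} \ge b^*$, we have $-a_T b_{T+1} \le -a_T b^*$. Substituting both bounds into the expression for $S$ gives
\[
S \le a_0 b_0 + (a_T - a_0) b^* - a_T b^* = a_0 b_0 - a_0 b^* = a_0(b_0 - b^*),
\]
which is exactly the claim.

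I do not anticipate a genuine obstacle here; the only point requiring mild care is the sign bookkeeping in the Abel summation and making sure the inequalities flip in the correct direction when multiplying by the nonpositive differences $a_t - a_{t-1}$ and by $-a_T < 0$. One should also note that $b^*$ is finite is not needed for the argument — if $\inf_{t\ge0} b_t = -\infty$ the right-hand side is $+\infty$ and the inequality is vacuous — but since the lemma is only applied with $b_t = r(x_t)$ and $b^* = \inf r$ (possibly $-\infty$ only if $\inf r = -\infty$, in which case $\min F$ is also $-\infty$ and the downstream bounds are trivial), there is nothing more to check.
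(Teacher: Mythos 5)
Your proof is correct and uses essentially the same argument as the paper: Abel summation combined with the monotonicity of $\{a_t\}$ and the bound $b_t \geq b^*$. The only cosmetic difference is that the paper recenters each $b_t$ as $b_t - b^*$ before summing by parts, whereas you sum by parts first and then invoke $b_t \geq b^*$; the sign bookkeeping is identical.
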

\begin{proof}
We successively deduce
\begin{align*}
\sum_{t=0}^T a_t{(b_t - b_{t+1})} & =\sum_{t=0}^T a_t{[(b_{t} -b^*)- (b_{t+1}-b^*)]}\\
&= a_0 (b_0-b^*) - a_T (b_{T+1}-b^*) 
+ \sum_{t=0}^{T-1} \left(a_{t+1} - a_t  \right)(b_{t+1} - b^*)  \\
&\leq a_0 (b_0- b^*),
\end{align*} 
as claimed.
\end{proof}

\subsection{Proofs of Propositions~\ref{prop:prox_linear_accuracy} and \ref{prop:prox_linear_lipschitz}}\label{appendix:prox_linear_accuracy}
\begin{proof}[Proof of Proposition~\ref{prop:prox_linear_accuracy}]
Using the fundamental theorem of calculus and convexity of the function $x\mapsto p(\|x\|_2)$  we compute
\begin{align*}
&\|c(x,\xi)+\nabla c(x,\xi)(y-x) - c(y,\xi)\|_2 \\
&= \left\|\int_{0}^1 \left(\nabla c(x + t(y - x), \xi) - \nabla c(x, \xi)\right)(y-x) \,dt \right\|_2\\
&\leq \int_{0}^1 \left\|\nabla c(x + t(y - x), \xi) - \nabla c(x, \xi)\right\|_{\text{op}}\|y-x\|_2\, dt \\
&\leq L_2(\xi)\|y-x\|^2_2\int_{0}^1  \left(p(\|x + t(y-x)\|_2)+ p(\|x\|_2)\right) t \, dt  \\
&\leq L_2(\xi)\|y-x\|^2_2\int_{0}^1  \left((1-t)p(\|x\|_2)+ t p(\|y\|_2))+ p(\|x\|_2)\right) t \, dt  \\
&\leq \frac{2L_2(\xi)}{3}\|y-x\|^2_2\cdot (p(\|x\|_2)+p(\|y\|_2)).
\end{align*}
Hence, we deduce
\begin{align*}
h\left(c(x,\xi)+\nabla c(x,\xi)(y-x) ,\xi\right) - h\left(c(y,\xi),\xi\right) 
&\leq L_1(\xi)\cdot \|c(x, \xi) + \nabla c(x, \xi)(y-x)-c(y,\xi)\|_2\\
&\leq \tfrac{2}{3}L_1(\xi)L_2(\xi)\|y-x\|^2_2\cdot (p(\|x\|_2)+p(\|y\|_2))\\
&\leq \tfrac{4}{3}L_1(\xi)L_2(\xi)\cdot D_{\Phi}(y,x),
\end{align*}
where the last inequality follows form Proposition~\ref{prop:bregpoly}.
Taking expectations yields the claimed guarantee.
\end{proof}

\begin{proof}[Proof of Proposition~\ref{prop:prox_linear_lipschitz}]
We successively compute 
\begin{align*}
h\left(c(x,\xi),\xi\right) - h\left(c(x,\xi)+\nabla c(x,\xi)(y-x) ,\xi\right) 
&= L_1(\xi)\|\nabla c(x,\xi)(y-x)\|_2 \\
&\leq L_1(\xi)L_3(\xi) \cdot \sqrt{q(\|x\|_2)} \| y-x\|_2\\
&\leq \sqrt{2} L_1(\xi)L_3(\xi) \cdot \sqrt{D_{\Phi}(y,x)},
\end{align*}
where the last line follows from~\cite[Equation~(25)]{Lu_mirror_weird}. The result follows. 
\end{proof}

\subsection{Proof of Theorem~\ref{thm:env_diff}}\label{sec:app_proof_diff_breg}
	First we rewrite $F_\lambda^\Phi$, using the definition of the Bregman divergence, as
	\begin{align*}
	F^\Phi_\lambda (x) 
	&= \inf_{y} \left\{ F(y) + \frac{1}{\lambda}\Phi(y) - \frac{1}{\lambda} \ip{\nabla \Phi(x)}{y} \right\} - \frac{1}{\lambda}\Phi(x) + \frac{1}{\lambda} \ip{\nabla \Phi(x)}{x} 
	\\
	&= - \sup_{y} \left\{  \ip{\frac{1}{\lambda} \nabla \Phi(x)}{y}  -  \left( F + \frac{1}{\lambda}\Phi \right) (y) \right\} - \frac{1}{\lambda}\Phi(x) + \frac{1}{\lambda} \ip{\nabla \Phi(x)}{x} 
	\\
	&= - \left( F + \frac{1}{\lambda}\Phi \right)^{\star} \left( \frac{1}{\lambda} \nabla \Phi(x)\right) - \frac{1}{\lambda}\Phi(x) + \frac{1}{\lambda} \ip{\nabla \Phi(x)}{x}.
	\end{align*}
	Note that $F + \frac{1}{\lambda}\Phi$ is closed and $\left( \frac{1}{\lambda} -  (\rho+\tau)\right)$-strongly convex.
	Thus the conjugate $(F + \frac{1}{\lambda}\Phi)^{\star}$ is differentiable. By the chain and sum rules for differentiation, we have
	\begin{align*}
	\nabla F^\Phi_\lambda (x) 
	&=
	- \frac{1}{\lambda} \nabla^2 \Phi(x) \left[ \nabla \left( F + \frac{1}{\lambda}\Phi \right)^{\star} \right] \left( \frac{1}{\lambda} \nabla \Phi(x)\right) + \frac{1}{\lambda} \nabla^2 \Phi(x)x 
	\\
	&= \frac{1}{\lambda} \nabla^2 \Phi(x) \left( x - \left[ \nabla \left( F + \frac{1}{\lambda}\Phi \right)^{\star} \right] \left( \frac{1}{\lambda} \nabla \Phi(x)\right) \right)
	\end{align*}
	The (sub)gradient of a convex conjugate function is simply the set of maximizers in the supremum defining the conjugate, so that
	\begin{align*}
	\left[ \nabla\left( F + \frac{1}{\lambda}\Phi \right)^{\star} \right] \left( \frac{1}{\lambda} \nabla \Phi(x)\right)
	&= \argmax_y \left\{ \ip{\frac{1}{\lambda} \nabla \Phi(x)}{y}  -  \left( F + \frac{1}{\lambda}\Phi \right) (y)\right\} \\
	&=  \argmin_y \left\{ F(y) + \frac{1}{\lambda}D_\Phi (y,x) \right\} \\
	&= \prox_{\lambda F}^\Phi (x).
	\end{align*}
	Putting everything together, we obtain,
	$ \nabla F^\Phi_\lambda (x) 
	= 
	\frac{1}{\lambda} \nabla^2 \Phi(x) \left( x - \hat x \right)
	$,
	as desired.

			\bibliographystyle{plain}
	\bibliography{bib}
\end{document}